\let\mathcal\mathscr
\numberwithin{equation}{section}
\theoremstyle{plain}
\newtheorem{theorem}{Theorem}[section]
\newtheorem{proposition}[theorem]{Proposition}
\theoremstyle{definition}
\newtheorem{definition}[theorem]{Definition}
\newtheorem{example}[theorem]{Example}
\theoremstyle{remark}
\newtheorem{remark}[theorem]{Remark}
\let\cal\mathcal
\let\bb\mathbb
 \def\Z{{\bf Z}}
\def\C{{\bb C}}
\def\epsilon{\varepsilon}
\begin{document}

\title[Parameters and theta lifts]{Parameters and theta lifts}

\author{Zhe Li}
\address{School of mathematical Sciences, Xiamen University, 422 South Siming Road, Siming District,
Xiamen, Fujian Province, China}
\email{zheli@xmu.edu.cn}
   
\author{Shanwen Wang}
\address{School of Mathematics, Renmin University of China, No. 59 Zhongguancun Street, Haidian District, 100872, Beijing, China}
\email{s\_wang@ruc.edu.cn}

\author{Zhiqi Zhu}
\address{Dipartimento di Matematica "Tullio Levi-Civita", Università di Padova, Via Trieste, 63, 35121 Padova, Italy}
\email{zhiqi.zhu@studenti.unipd.it}
\thanks{The research of the corresponding author (S.W.) was supported by the Fundamental Research Funds for the Central Universities, and the research Funds of Renmin University of China n\textsuperscript{o} 20XNLG04 and the National Natural Science Foundation of China (Grant n\textsuperscript{o} 11971035).}
\begin{abstract}
In this note, we make explicit the correspondence between Harish-Chandra parameters and Langlands-Vogan parameters for symplectic groups and orthogonal groups of equal rank over reals. As an application, we reformulate Moeglin's results \cite{Moe89} and Paul's work \cite{Paul05} on the Howe correspondence for symplectic-orthogonal dual pairs using Langlands-Vogan parameters.
\end{abstract}
\thanks{}
\setcounter{tocdepth}{3}
\maketitle
\let\languagename\relax

\stepcounter{tocdepth}
{\Small
\tableofcontents
}
\section{Introduction}
Throughout this paper, we fix an additive character $\psi:{\bb R}\rightarrow {\bb C}^{\times}$, set $\Gamma=\{1, \sigma\}$ the Galois group of ${\bb C}/{\bb R}$ and 
by a representation of a real reductive group $G$, we mean a smooth Fr\'echet representation of moderate growth. 

Given a real reductive group $G$, let $\mathfrak{g}$ be the Lie algebra of $G$ and we fix a maximal compact subgroup $K$ of $G$. Wallach \cite[Chapter~11]{Wal92} introduced an equivalence between the category of smooth Fr\'echet representations of moderate growth of $G$ and the category of admissible finitely generated $(\mathfrak{g}, K)$-modules. Since any such $(\mathfrak{g}, K)$-module can be parametrized by Harish-Chandra parameters, so are the representations of $G$.

Let $(V,(\cdot,\cdot))$ be a $2n$-dimensional real vector space equipped with a non-degenerate symplectic bilinear form and $(V^{'},\langle\cdot, \cdot\rangle)$ be a $(2n+2)$-dimensional real vector space equipped with a symmetric bilinear form. Let $G = {\rm Sp}(V)$ and $G' = {\rm O}(V^{'})$ be the isometry groups of $V$ and $V^{'}$ respectively. Moreover, let ${\bb V}= V\otimes V^{'}$ equipped with the symplectic form $(\cdot,\cdot)\otimes \langle\cdot, \cdot\rangle$ and we denote by $\widehat{\rm Sp}({\bb V})$ the metaplectic group associated to ${\bb V}$. The pair $(G, G')$ is a reductive dual pair in the symplectic group ${\rm Sp}(\bb V)$ (\cite[\S II.1]{Ku}), and there is a natural map
\[\iota: {\rm Sp}(V)\times {\rm O}(V^{'})\rightarrow {\rm Sp}(\bb V),\]
which can be lifted to a homomorphism\footnote{Such a splitting of $\widehat{\mathrm{Sp}}(\bb V)\rightarrow \mathrm{Sp}(\bb V)$ is not unique, which depends on some auxiliary data described in \cite[\S 3.2]{GI14}.}:
\begin{equation}\label{splitA}\iota_{V, V^{'}}: {\rm Sp}(V)\times {\rm O}(V^{'})\rightarrow \widehat{{\rm Sp}}(\bb V).\end{equation}

For the dual pair $(G,G')$, Roger Howe \cite{Howe} introduced the theta correspondence between the representations of $G$ and the representations of $G'$. His construction used the Weil representation $\omega_{\psi}$ of $\widehat{\mathrm{Sp}}({\bb V})$(cf. \cite[\S I.1]{Ku}), which is an extension of the oscillating representation of the Heisenberg group $H({\bb V})={\bb V}\oplus \mathbb{R}$, depending on the choice of $\psi$. If $\pi$ and $\pi^{'}$ are irreducible admissible representations of $G$ and $G'$ respectively, we say $\pi$ and $\pi^{'}$ \textit{correspond} if $\pi \otimes \pi^{'}$ is a quotient of the Weil representation $\omega_\psi$ of $\widehat{{\rm Sp}}(\bb V)$, restricted to $G\times G'$, denoted by $\theta_{V^{'},V}(\pi^{'})=\pi$ and $\theta_{V,V^{'}}(\pi)=\pi^{'}$. If $V'$ has signature $(p,q)$, we will replace the notation $\theta_{V,V'}$ by $\theta_{p,q}$.

Assume $(p,q)$ is the signature of the symmetric bilinear form $\langle\cdot, \cdot\rangle$. Moeglin \cite{Moe89} computed a significant part of the full correspondence where $p$ and $q$ are both even. Later on, using the Harish-Chandra parameter, Paul \cite[Theorem 15]{Paul05} gave a complete and explicit description of the correspondence by filling in the case\footnote{In this paper, we consider the equal rank case, which implies that $p,q$ are both even. }with $p$ and $q$ odd. Their description of the explicit theta correspondence for the symplectic-orthogonal dual pairs uses Harish-Chandra parameters. 

On the other hand, we can parametrize the representations of real reductive groups via Langlands-Vogan parameters, which is a more direct parametrization compared to Harish-Chandra parametrization. The goal of this paper is to translate Moeglin's results and Paul's results for the limit of discrete series representations via Langlands-Vogan parameters of the smooth Fréchet representations of moderate growth, which will serve as an input for our forthcoming paper on GGP conjecture for Fourier-Jacobi case over reals. As a consequence, we also formulate such translation for tempered representations.

We begin by giving a precise statement of the results (cf. Thm. \ref{main1} and Thm. \ref{main2}).
\subsection{Langlands-Vogan parameters}\label{LangVoganInd}

Let $H=G$ or $G'$. We fix a pinning of the complex dual group $H^{d}$
\[{\bf Spl}_{H^d}=({\cal B}, \cal{T}_{*}, \{{\cal X}_\alpha\}),\]
where ${\cal B}$ is a Borel subgroup of $H^d$,  $\cal{T}_{*}$ is a torus of ${\cal B}$, $\{{\cal X}_\alpha\}$ is the set of root vectors for the simple roots of $\cal{T}_{*}$ in $\cal{B}$(cf. \S \ref{pinningdef}). Let $W_{\bb R}$ be the Weil group of ${\bb R}$. 
The Langlands dual group $^LH$ of $H$ associated with the pinning ${\bf Spl}_{H^d}$ is the semi-direct product $H^d\rtimes W_{\bb R}$, where the action of $W_{\bb R}$ on $H^d$ factors through the projection $p_{W_{\bb R}}: W_{\bb R}\rightarrow {\rm Gal}({\bb C}/{\bb R})$ stabilizing ${\bf Spl}_{H^d}$. We remark that $^LH$ only depends on its inner class.  A \textbf{Langlands parameter} (or \textbf{L-parameter}) of $H$ is a continuous morphism $\varphi: W_{\bb R}\rightarrow {^LH}$ satisfying the following two conditions:
\begin{enumerate}
\item $p_{\bb R}\circ \varphi=\mathrm{Id}_{W_{\bb R}}$, where $p_{\bb R}: {^L H}\rightarrow W_{\bb R}$;
\item the image of the restriction map $\varphi_{|_{\mathbb{C}^{\times}}}: \mathbb{C}^{\times}\rightarrow H^d\times \mathbb{C}^{\times}$ consists of the elements which is semi-simple in $H^d$.
\end{enumerate} 
In particular, a Langlands parameter of $H$ is called tempered if its image is bounded.

Recall that the irreducible representations of $W_{\bb R}$ have dimension either $1$ or $2$ as $W_{\bb R}$ has an abelian subgroup of index $2$. 

\begin{enumerate}

\item[(1)] The $1$-dimensional representations of $W_{\bb R}$ are the quasi-characters of $W_{\bb R}^{\rm ab}={\bb R}^{\times}$, which are of the form $\chi_{\epsilon, s}(x)={\bf sgn}(x)^{\frac{\epsilon-1}{2}} \vert x\vert^s$, for $\epsilon\in\{\pm1\}$ and $s\in {\bb C}$.
 
 \item[(2)] 
 The irreducible representations of $W_{\bb R}$ of dimension $2$ are of the form  \[\rho_{m,s}={\rm Ind}_{{\bb C}^{\times}}^{W_{\bb R}}\chi_{m,s},\] for $\chi_{m,s}(z)=z^m\cdot(z\bar{z})^{-m/2}\cdot \vert z\vert_{\bb C}^s$ a quasi-character of ${\bb C}^{\times}$ with $m\in {\bb Z}$ and $s\in {\bb C}$. We will denote $\rho_{m,0}$ by $\rho_m$.

\end{enumerate}
Thus, a Langlands parameter $\varphi$ of $H$ can be represented using its decomposition into direct sum of irreducible representations of $W_{\bb R}$.

The complex dual group $H^d$ of $H$ acts by conjugation on the set of Langlands parameters of $H$. We denote by $\Phi(H)$ the set of $H^d$-conjugacy classes of the Langlands parameters of $H$ and by $\Phi_{{\rm temp}}(H)$ the set of conjugacy classes of the tempered Langlands parameters. The classification theorem of Langlands says that there exists a partition of the set $\Pi(H)$ of the equivalent classes of irreducible representations of $H$:
\[\Pi(H)=\coprod_{\varphi\in \Phi(H)}\Pi(\varphi, H),\]
where $\Pi(\varphi,H)$'s are finite sets of irreducible representations of $H$, called the {\bf L-packets} (or {\bf Langlands packets}).
This partition satisfies a number of properties and among them, the most important ones for us are the followings:
\begin{enumerate}
\item all the elements of a L-packet admit the same infinitesimal character;
\item for the set $\Pi_{{\rm temp}}(H)$ of all tempered representations of $H$, we have $$\Pi_{{\rm temp}}(H)=\coprod_{\varphi\in \Phi_{{\rm temp}}(H)}\Pi(\varphi,H).$$
\end{enumerate}

Suppose $H$ is a real form of equal rank. For each Langlands parameter $\varphi$ of $H$, Vogan associated it to a finite set $\Pi(\varphi)$ of irreducible representations of $\tilde{H}$ with $\tilde{H}$ running over all pure inner forms of $H$ (cf. \S \ref{VoganLpacket} or \cite{Vogan93}), called the Vogan L-packet associated to $\varphi$. It is known that there is a natural bijection between $\Pi(\varphi)$ and the set of irreducible representations of the finite group $A_\varphi=\pi_0(C_\varphi)$ (\cite[Theorem~6.3]{Vogan93}), where $C_\varphi$ is the centralizer of the image of $\varphi$ in $H^d$.

In \S \ref{subsec2.6}, we will describe such a bijection for limit of discrete series L-parameter $\varphi$ of $H$ \[\widehat{A_{\varphi}}\rightarrow \Pi(\varphi)\] which maps the identity of $\widehat{A_{\varphi}}$ to the generic representation in $\Pi(\varphi)$. Here we say a representation is generic if it admits a Whittaker model (\cite[\S 5]{Ada11}). 
\subsection{Main result}
let $\pi$ be a limit of discrete series representation of $G$ with the Langlands-Vogan parameter $(\varphi,\eta)$. Suppose \[\varphi=\oplus_{i=1}^kc_i\rho_{\lambda_i}\bigoplus (2z+1)\mathbf{1},\] where 
\begin{enumerate}
\item $\lambda_i$ are odd positive integers such that $\lambda_1>\cdots>\lambda_k>0$;
\item $\rho_{\lambda_i}$'s are self-dual irreducible representations of the Weil group $W_{\mathbb{R}}$ of dimension $2$;
\item $c_i, z \in {\bb N}$, $i=1,\cdots,k$, with $z + \sum_{i=1}^k c_i=n$.
\end{enumerate}
Then the component group $A_\varphi $ is a quotient of $\oplus_{i=1}^r ({\bb Z}/2{\bb Z})a_i \bigoplus \oplus_{j=1}^{z} ({\bb Z} /2{\bb Z}) b_j$ with $r=\sum_{i=1}^{k}c_i$. The signature of the character $\eta$ gives a partition of $c_i=p_{\eta,i}+q_{\eta, i}$, where $p_{\eta, i}$ and $q_{\eta,i}$ denote the number of positive signatures and negative signatures at the component corresponding to $\rho_{\lambda_i}$ respectively. 
\begin{theorem}\label{main1}
Let $V$ be a $2n$-dimensional symplectic space over $\mathbb{R}$.

$(1)$ Let $\pi$ be a limit of discrete series representation of $\mathrm{Sp}(V)$ with Langlands-Vogan parameter $(\varphi,\eta)$, where \[\varphi=\oplus_{i=1}^k(p_{\eta,i}+q_{\eta,i})\rho_{\lambda_i}\bigoplus (2z+1)\mathbf{1},\] with $p_{\eta,i},q_{\eta,i},z\in\mathbb{N}$, $i=1,\cdots,k$, $z + \sum_{i=1}^k (p_{\eta,i}+q_{\eta,i})=n$ and $\rho_{\lambda_i}$ self-dual irreducible representation of the Weil group $W_{\mathbb{R}}$ of dimension 2 with $\lambda_i$ an odd integer. Then there exists a unique pair of even integers $(p,q)$ satisfying $p+q=2n+2$ and a $(2n+2)$-dimensional orthogonal space $V^{'}$ with signature $(p,q)$ such that $\theta_{p,q}(\pi)$ is a limit of discrete series representation of $\mathrm{O}(V^{'})$ with Langlands-Vogan parameter ($\theta_{p,q}(\varphi)$, $\theta_{p,q}(\eta))$, where \[\theta_{p,q}(\varphi)=\varphi+{\bf 1}=\oplus_{i=1}^k(p_{\eta,i}+q_{\eta,i})\rho_{\lambda_i}\bigoplus(2z+2)\mathbf{1}.\]
Then the component group $A_{\varphi}$ is a subgroup of $A_{\theta_{p,q}(\varphi)}$, and we have
$$\theta_{p,q}(\eta)|_{A_\varphi}=\eta.$$
 
$(2)$ Let $V^{'}$ be a $(2n+2)$-dimensional real orthogonal space with signature $(p,q)$, where $p,q$ are even integers, and let $\pi^{'}$ be a limit of discrete series representation of $\mathrm{O}(V^{'})$ with Langlands-Vogan parameter $(\varphi^{'},\eta^{'})$. Assume that $\theta_{V^{'},V}(\pi^{'})\neq 0$. Then the Langlands-Vogan parameter $(\varphi,\eta)$ of the representation $\theta_{V^{'},V}(\pi^{'})$ of $\mathrm{Sp}(V)$ is given by
\[\varphi=\varphi^{'}-\mathbf{1}, \quad \eta=\eta^{'}\vert_{A_{\varphi}}.\]

\end{theorem}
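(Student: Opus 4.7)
The plan is to combine two ingredients: the explicit dictionary between Harish--Chandra parameters and Langlands--Vogan parameters for limits of discrete series, worked out in \S\ref{subsec2.6}, together with Paul's complete description \cite[Thm.~15]{Paul05} (refining \cite{Moe89}) of the theta correspondence for limits of discrete series in Harish--Chandra coordinates. Both $(\varphi,\eta)$ and the Harish--Chandra parameter of $\pi$ package the same data: the infinitesimal character (read off from the multi-set $\{\pm\lambda_i\}$ together with $2z+1$ zeros), the pure inner form of $\mathrm{Sp}(V)$ carrying $\pi$, and the signatures $(p_{\eta,i},q_{\eta,i})$ at each $\rho_{\lambda_i}$-block. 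The L-parameter $\varphi$ encodes the infinitesimal character and the block decomposition, while the character $\eta$ of $A_\varphi$ encodes the signatures and the pure inner form choice.

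Given this setup, I would first translate $(\varphi,\eta)$ into the Harish--Chandra parameter of $\pi$ via the bijection of \S\ref{subsec2.6}, then apply \cite[Thm.~15]{Paul05}. Paul's theorem produces a unique pair $(p,q)$ of even integers with $p+q=2n+2$ for which $\theta_{p,q}(\pi)$ is a non-zero limit of discrete series of $\mathrm{O}(V')$, together with the Harish--Chandra parameter of $\theta_{p,q}(\pi)$; this parameter has exactly one extra zero eigenvalue in its infinitesimal character and preserves the signatures at each $\rho_{\lambda_i}$-block. Translating back through the orthogonal version of the dictionary, the extra zero forces one extra trivial summand in the L-parameter, yielding $\theta_{p,q}(\varphi)=\varphi+\mathbf{1}$, while the signatures determine the character $\theta_{p,q}(\eta)$ of $A_{\theta_{p,q}(\varphi)}$.

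It remains to establish the inclusion $A_\varphi\subset A_{\theta_{p,q}(\varphi)}$ and the compatibility $\theta_{p,q}(\eta)|_{A_\varphi}=\eta$. On the dual-group side, adding one trivial summand replaces the centralizer factor $\mathrm{O}(2z+1)$ by $\mathrm{O}(2z+2)$; combined with the switch from an $\mathrm{SO}$-valued to an $\mathrm{O}$-valued centralizer condition (passing from the symplectic to the orthogonal dual), this gives a natural inclusion of component groups whose cokernel is generated by the new trivial summand. Since both $\eta$ and $\theta_{p,q}(\eta)|_{A_\varphi}$ are read off from the same signature data $(p_{\eta,i},q_{\eta,i})$, the restriction identity then follows by unwinding the dictionary. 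Part~(2) is obtained by reversibility: given $\pi'$ with $\theta_{V',V}(\pi')\neq 0$, Paul's result guarantees that the lift is itself a limit of discrete series of $\mathrm{Sp}(V)$, and applying part~(1) to it and matching parameters yields $\varphi=\varphi'-\mathbf{1}$ and $\eta=\eta'|_{A_\varphi}$.

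The main obstacle is this last compatibility check: the groups $A_\varphi$ and $A_{\theta_{p,q}(\varphi)}$ are presented as quotients of direct sums of copies of ${\bb Z}/2{\bb Z}$ by relations that differ between the symplectic and orthogonal sides, so one must verify explicitly that the signature data descends to a well-defined injection on character groups, and that the extra generator on the orthogonal side indexes precisely the ambiguity in choosing $(p,q)$ --- an ambiguity pinned down by the uniqueness statement in Paul's theorem. This is a finite and explicit bookkeeping that combines the tables of \S\ref{subsec2.6} with the formulas of \cite[Thm.~15]{Paul05}.
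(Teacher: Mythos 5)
Your proposal matches the paper's own strategy: \S\ref{subsec2.6} sets up the explicit dictionary (via basepoints $x_{b,\mathrm{Sp}}$, $x_{b,\mathrm{O}}$ in $\tilde{S}(z(\rho^\vee))$ and the Weyl-group action) between discrete/limit-of-discrete-series Langlands--Vogan parameters and Harish--Chandra parameters, the unnumbered proposition in \S\ref{thetaPara} checks that the basepoints match under $\theta$, and the remainder of \S3.2 unwinds Theorem~\ref{Paul05JFA} case by case on $(z,w)$ to produce $\theta_{p,q}(\varphi)=\varphi+\mathbf{1}$ and $\theta_{p,q}(\eta)|_{A_\varphi}=\eta$, with part~(2) handled by the same dictionary and \cite[Cor.~24]{Paul05}. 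Your outline of the component-group inclusion is the same bookkeeping the paper carries out via Example~\ref{Lparameterdecom} (note the dual groups are $\mathrm{SO}_{2n+1}(\mathbb{C})$ and $\mathrm{SO}_{2n+2}(\mathbb{C})$ respectively, so the ``$\mathrm{SO}$-to-$\mathrm{O}$ switch'' you gesture at is really the change from an odd to an even orthogonal dual, but the resulting inclusion is the one you describe).
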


Moreover, by the Langlands-Vogan
parametrization of parabolic inductions and the induction principle
of theta lifts, the main result of the tempered case can be reduced to the
limit of discrete series case.

\begin{theorem}\label{main2}
Let $V$ be a $2n$-dimensional symplectic space over $\mathbb{R}$.
    
$(1)$ Let $\pi$ be a tempered representation of $\mathrm{Sp}(V)$ with Langlands-Vogan parameter $(\varphi,\eta)$. Then there exists a unique pair of even integers $(p,q)$ satisfying $p+q=2n+2$ and a $(2n+2)$-dimensional orthogonal space $V^{'}$ with signature $(p,q)$ such that $\theta_{p,q}(\pi)$ is a tempered representation of $\mathrm{O}(p,q)$ with Langlands-Vogan parameter ($\theta_{p,q}(\varphi)$, $\theta_{p,q}(\eta))$, where $\theta_{p,q}(\varphi)=\varphi+{\bf 1}$. Then the component group $A_{\varphi}$ is a subgroup of $A_{\theta_{p,q}(\varphi)}$, and we have
$$\theta_{p,q}(\eta)|_{A_\varphi}=\eta.$$

$(2)$ Let $V^{'}$ be a $(2n+2)$-dimensional real orthogonal space with signature $(p,q)$, where $p,q$ are even integers, and let $\pi^{'}$ be a tempered representation of $\mathrm{O}(V^{'})$ with Langlands-Vogan parameter $(\varphi^{'},\eta^{'})$. Assume that $\theta_{V^{'},V}(\pi^{'})\neq 0$. Then the Langlands-Vogan parameter $(\varphi,\eta)$ of the tempered representation $\theta_{V^{'},V}(\pi^{'})$ of $\mathrm{Sp}(V)$ is given by
\[\varphi=\varphi^{'}-\mathbf{1}, \quad \eta=\eta^{'}\vert_{A_{\varphi}}.\]
\end{theorem}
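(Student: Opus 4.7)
My plan is to reduce Theorem \ref{main2} to Theorem \ref{main1} by combining the Langlands--Vogan parametrization of irreducible tempered parabolic inductions with the induction principle for the Howe correspondence, exactly as indicated in the paragraph preceding the statement.

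For part (1), I decompose the tempered parameter $\varphi$ of $\mathrm{Sp}(V)$ as $\varphi = \varphi_0 \oplus \psi$, where $\varphi_0$ collects those irreducible summands of the form $\rho_\lambda$ with $\lambda>0$ an odd integer or the trivial character $\mathbf{1}$, and $\psi$ consists of the remaining tempered summands $\chi_{\epsilon, it}$ and $\rho_{m, it}$, grouped in dual pairs so that $\psi$ descends from a tempered character $\tau$ of a general linear Levi. The Langlands--Vogan parametrization then realizes $\pi$ as an irreducible tempered parabolic induction
\[
\pi = \mathrm{Ind}_P^{\mathrm{Sp}(V)}(\tau \boxtimes \pi_0),
\]
with Levi $M \simeq \prod_i \mathrm{GL}_{k_i}(\mathbb{R}) \times \mathrm{Sp}(V_0)$, and $\pi_0$ a limit of discrete series of $\mathrm{Sp}(V_0)$ with Langlands--Vogan parameter $(\varphi_0, \eta_0)$ for some $\eta_0$. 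Since the summands of $\psi$ are not of the type that contributes to the component group, the canonical map $A_{\varphi_0} \to A_\varphi$ is an isomorphism and $\eta$ corresponds to $\eta_0$ under this identification.

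Next I invoke the induction principle for theta lifts, due to Kudla: writing $V' = X' \oplus V_0' \oplus (X')^{*}$ with $X'$ totally isotropic of dimension $k = \sum_i k_i$ and $P' = M'N'$ the corresponding parabolic of $\mathrm{O}(V')$, one has
\[
\theta_{V', V}(\pi) = \mathrm{Ind}_{P'}^{\mathrm{O}(V')}\bigl(\tau \boxtimes \theta_{V_0', V_0}(\pi_0)\bigr),
\]
which is again tempered and irreducible and is nonzero iff $\theta_{V_0', V_0}(\pi_0) \ne 0$. Theorem \ref{main1}(1) applied to $(V_0, \pi_0)$ produces a unique even-signature $V_0'$ of signature $(p_0, q_0)$ with $p_0+q_0 = \dim V_0 + 2$ such that $\theta_{V_0', V_0}(\pi_0)$ is a limit of discrete series with parameter $(\varphi_0 + \mathbf{1}, \eta_0)$. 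Taking $V'$ of signature $(p_0 + k, q_0 + k)$ (obtained by adjoining the hyperbolic space $X' \oplus (X')^{*}$) yields a tempered $\theta_{V', V}(\pi)$ with parameter $\varphi + \mathbf{1}$, and the identification $A_{\varphi + \mathbf{1}} = A_{\varphi_0 + \mathbf{1}}$ transports the character matching from Theorem \ref{main1} to yield $\theta_{p,q}(\eta)|_{A_\varphi} = \eta$. Part (2) follows by the symmetric argument, expressing $\pi'$ as a parabolic induction from a limit of discrete series of a smaller orthogonal group and running the induction principle in the orthogonal-to-symplectic direction.

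The main technical obstacle I anticipate is verifying compatibility between Vogan's base-point normalization of the bijection $\widehat{A_\varphi} \to \Pi(\varphi)$ (pinned in \S \ref{subsec2.6} by sending the trivial character to the generic member) and the parabolic-induction decomposition of the tempered L-packet: one must check that the generic member of the induced packet is the parabolic induction of the generic members on both the symplectic and orthogonal sides, so that the component-group identifications $A_\varphi = A_{\varphi_0}$ and $A_{\varphi + \mathbf{1}} = A_{\varphi_0 + \mathbf{1}}$ are base-point preserving. Once this compatibility is checked, Theorem \ref{main2} follows formally from Theorem \ref{main1}.
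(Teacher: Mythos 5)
Your proposal is correct and follows essentially the same route as the paper: decompose the tempered parameter into its ``good parity'' self-dual part $\varphi_0$ plus the rest, realize $\pi$ as an irreducible tempered parabolic induction with a limit of discrete series on the anisotropic Levi factor, invoke the induction principle for the Howe correspondence (Paul's Theorem 18, which the paper records as Theorem \ref{Inductionprinc'}) to reduce to Theorem \ref{main1}, and then transport the Vogan-character matching through the canonical identification $A_{\varphi} \cong A_{\varphi_0}$ coming from Proposition \ref{bijectiontolds}. Two small remarks: (i) the paper works with a Levi of the shape $H_0\times\mathrm{GL}_2(\mathbb{R})^s\times\mathrm{GL}_1(\mathbb{R})^t$ rather than the more generic $\prod_i\mathrm{GL}_{k_i}(\mathbb{R})\times H_0$, which amounts to the same thing over $\mathbb{R}$; and (ii) Paul's induction principle in fact replaces the $\mathrm{GL}_1$-characters $\chi_{\epsilon,\kappa}$ by $\chi_{\epsilon_{p,q},\kappa}$ with the sign twisted by $(-1)^{(p'-q')/2}$, which your induction formula suppresses — this twist lives entirely in the non-self-dual part of the parameter and so does not affect $A_\varphi$ or $\eta$, but if one wants the literal identity $\theta_{p,q}(\varphi)=\varphi+\mathbf{1}$ one should note why the twist is harmless (or trivial) here, a point the paper also leaves implicit. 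Your observation about base-point compatibility (that the generic member of the tempered L-packet is the induction of the generic member in the discrete-series packet on both sides) is exactly the content justifying the commutative diagram the paper draws after Proposition \ref{bijectiontolds}, so it is the right thing to verify.
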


\begin{remark}
The corresponding reinterpretation for real unitary groups case has been done in \cite[Section 5.3.1]{Ichi20}; for metaplectic-orthogonal dual pairs, the reinterpretation is trivial since Gan, Gross and Prasad \cite{GGP12} defined the Langlands-Vogan parameters of metaplectic groups via theta correspondence. 
\end{remark}

\subsection{Acknowledgement} This note serves as a preparation for a forthcoming paper, initially started from a discussion with Hang Xue. 
During the preparation of this note, Shanwen Wang has benefited by the discussions with Wenwei Li, Cai Li, David Renard, Hang Xue and Lei Zhang. Part of this article is written during the visits of first author at BIMCR, Peking University and HongKong University. The first author would like to thank Wenwei Li and Kei Yuen Chan for their hospitalities. The third author would like to thank Jeffrey Adams for helpful discussions about the material to be found in Section \ref{Realclassicalgroup}. Finally, the authors would like to express their special gratitude to Hang Xue for his constant support. 
\section{Preliminary}

We consider the following real classical groups of rank $n$ with $n\geq 1$:
\begin{enumerate}
\item The symplectic group ${\rm Sp}_{2n}$. Its Langlands dual group is ${\rm SO}_{2n+1}(\mathbb{C})$ and its $L$-group is the direct product ${\rm SO}_{2n+1}(\mathbb{C})\times W_{\mathbb{R}}$.

\item The even split special orthogonal group ${\rm SO}_{2n}^{\rm s}$, $n\geq 2$. Its Langlands dual group is ${\rm SO}_{2n}(\mathbb{C})$ and its L-group is the direct product ${\rm SO}_{2n}(\mathbb{C})\times W_{\mathbb{R}}$.
\item The quasi-split even special orthogonal group ${\rm SO}_{2n}^{\rm qs}$, $n\geq 1$. Its Langlands dual is ${\mathrm{SO}}_{2n}(\mathbb{C})$ and its $L$-group is the semi-direct product ${\rm SO}_{2n}(\mathbb{C})\rtimes W_{\mathbb{R}}$.
\end{enumerate}

\subsection{Complex reductive groups}

\subsubsection{Based Root Datum}
Let ${\bf G}$ be a complex reductive group. A Borel pair $({\bf B}, {\bf T})$ of ${\bf G}$ is a pair consisting of a maximal torus ${\bf T}$ of ${\bf G}$ and a Borel subgroup ${\bf B}$ containing ${\bf T}$.
For any Borel pair $({\bf B}, {\bf T})$ of ${\bf G}$, let $X({\bf T})$ be the group of characters of ${\bf T}$, $\Phi$ the set of roots, $X^\vee({\bf T})$ the group of cocharacters of ${\bf T}$ and $\Phi^\vee$ the set of coroots.
We denote the natural pairing $X({\bf T})\times X^\vee({\bf T})\rightarrow {\bb Z}$ by $\langle\cdot, \cdot\rangle$.
We also fix the canonical isomorphisms:
\[\mathfrak{t}\cong X^\vee({\bf T}) \otimes_{\bb Z} {\bb C},\quad \mathfrak{t}^{\vee} \cong X({\bf T})\otimes_{\bb Z} {\bb C},\] where $\mathfrak{t}=\mathrm{Lie}({\bf T}).$

\begin{definition}
\begin{enumerate}
    \item The root datum of ${\bf G}$ is a $4$-tuple $$R({\bf G}, {\bf T})=(X(\mathbf{T}), \Phi, X(\mathbf{T})^\vee, \Phi^{\vee})$$ where $\Phi$ (resp. $\Phi^\vee$) is the set of roots (resp. coroots) associated to the pair $(\mathbf{G},\mathbf{T})$.
\item Fix a Borel pair $({\bf B}, {\bf T})$ of ${\bf G}$. Let $\Delta$ (resp. $\Delta^\vee$) be the set of positive simple roots (resp. coroots) corresponding to ${\bf B}$. We call the $6$-tuple $D_b=(X,\Phi,\Delta, X^{\vee},\Phi^{\vee},\Delta^{\vee})$ the based root datum of $\mathbf{G}$ associated to $(\mathbf{B},\mathbf{T})$.
\end{enumerate}

\end{definition}

Up to a canonical isomorphism, the $6$-tuple is independent of the choice of Borel pair: if $({\bf B}^{'}, {\bf T}^{'})$ is another Borel pair of ${\bf G}$,
then there exists $g\in {\bf G}$ such that ${\rm Ad}(g)$ carries $({\bf B}^{'}, {\bf T}^{'})$ to $({\bf B}, {\bf T})$ and the induced isomorphism on based root datum $D_b$ of ${\bf G}$ is independent of $g$.

\subsubsection{Automorphisms} We denote by ${\rm Inn}({\bf G})$, ${\rm Aut}({\bf G})$ and ${\rm Out}({\bf G})$, the group of inner automorphisms of ${\bf G}$, the group of (holomorphic) automorphisms of ${\bf G}$ and the group of outer automorphisms of ${\bf G}$ respectively.
There exists a short exact sequence of groups
\begin{align}\label{short}
    1\rightarrow {\rm Inn}({\bf G})\rightarrow {\rm Aut}({\bf G})\rightarrow {\rm Out}({\bf G})\rightarrow 1.
\end{align}
The group ${\rm Aut}({\bf G})$ acts on the set of Borel pairs of ${\bf G}$ with maximal split torus ${\bf T}$.
If $\sigma\in{\rm Aut}({\bf G})$, the Borel pairs $(\sigma({\bf B}), \sigma({\bf T}))$ and $({\bf B}, {\bf T})$ are conjugate by an element $g_\sigma \in {\bf G}$, which is uniquely determined by $\sigma$ up to an element of ${\bf T}$ (cf. \cite[Prop. 6.2.11 (2)]{Con14}).
This induces a group homomorphism ${\rm Aut}({\bf G})\rightarrow {\rm Aut}(R({\bf G}, {\bf T}), \Delta)$ defined by $\sigma\mapsto {\rm Ad}(g_\sigma)\circ\sigma$, where ${\rm Aut}(R({\bf G}, {\bf T}),\Delta)$ is the group consisting of automorphisms of $D_b$.
By \cite[Prop. 7.1.6]{Con14}, there is an exact sequence
\begin{equation}\label{pinning}
1\rightarrow {\rm Inn}({\bf G})\rightarrow {\rm Aut}({\bf G})\rightarrow {\rm Aut}(R({\bf G}, {\bf T}),\Delta)\rightarrow 1,
\end{equation}
identifying ${\rm Out}({\bf G})$ with ${\rm Aut}(R({\bf G}, {\bf T}),\Delta)$.
\subsubsection{Pinning and L-group}\label{pinningdef}
\begin{definition}
    A pinning for ${\bf G}$ is a triple ${\bf Spl}_{{\bf G}}=({\bf B}, {\bf T}, \{X_\alpha\}_{\alpha\in \Delta})$, where $({\bf B}, {\bf T})$ is a Borel pair of ${\bf G}$, $\Delta$ is the set of simple roots associated to $({\bf B}, {\bf T})$ and $X_\alpha$ is a $\alpha$-root vector of ${\bf T}$ in ${\rm Lie}({ \bf B})$.\end{definition}

The group ${\bf G}$ acts on the set of pinnings by conjugation. Given a pinning ${\bf Spl}_{{\bf G}}$, we can associate it with an isomorphism
\[s_{{\bf Spl}_{{\bf G}}}:{\rm Out}({\bf G}) \cong {\rm Stab}_{{\rm Aut}({\bf G})}({\bf Spl}_{{\bf G}})\subset {\rm Aut}({\bf G})\]
and this is a splitting of the exact sequence (\ref{short}).
We call a splitting of the exact sequence (\ref{short}) distinguished if it fixes a pinning for ${\bf G}$.

The complex dual group ${\bf G}^d$ of ${\bf G}$ is the complex, connected reductive group whose root datum is isomorphic to $R({\bf G}, {\bf T})^{\vee}$.
Fix such an isomorphism of $R({\bf G}, {\bf T})^{\vee}$ with $R({\bf G}^{d}, {\bf T}^{d})$.
Through the natural Galois action $\Gamma$ on $R({\bf G}, {\bf T})$, we obtain a homomorphism $\Gamma \rightarrow {\rm Out}({\bf G}^d)$, using the identification of two short sequences (\ref{short}) and (\ref{pinning}).
By composing with the section $s_{{\bf Spl}_{{\bf G}^d}}$ defined by a pinning ${\bf Spl}_{{\bf G}^d}$, we obtain an action of $\Gamma$ on ${\bf G}^d$ which preserves the pinning ${\bf Spl}_{{\bf G}^d}$.
The Langlands group $^{L}{\bf G}$ of ${\bf G}$ associated with the pinning ${\bf Spl}_{{\bf G}^d}$ is the semi-direct product ${\bf G}^d\rtimes W_{\bb R}$,
where the action of $W_{\bb R}$ on ${\bf G}^d$ factors through the projection $p_{W_{\bb R}}: W_{\bb R}\rightarrow \Gamma$ and it stabilizes ${\bf Spl}_{{\bf G}^d}$.

\subsection{Root systems of real reductive groups}

In the study of connected semisimple groups up to central isogeny, it is convenient to work with a coarser notion than a root datum, in which we relax the $\mathbb{Z}$-structure to a $\mathbb{Q}$-structure and remove the explicit mention of the coroots. 

\begin{definition}
    A root system is a pair $(V, \Phi)$ consisting of a finite-dimensional $\mathbb{Q}$-vector space $V$ and a finite spanning set $\Phi\subset V-\{0\}$ such that for each $\alpha\in\Phi$ there exists a reflection $s_{\alpha}: v \mapsto v-\lambda(v)\alpha$ with $\lambda\in V^*$ such that $s_\alpha(\Phi)=\Phi,s_\alpha(\alpha)=-\alpha$ and $\lambda(\Phi)\subset \mathbb{Z}$.
\end{definition}

\begin{remark}
If $(X,\Phi,X^\vee,\Phi^\vee)$ is a root datum, then the $\mathbb{Q}$-span $V$ of $\Phi$ together with $\Phi$ is a root system.
\end{remark}

Let ${\bf G}$ be a complex reductive group. A real form of a complex reductive group ${\bf G}$ is an antiholomorphic involutive automorphism $\sigma$ of ${\bf G}$.

\begin{definition} \cite[D\'efinition 5.1]{MD19} \label{FBP}Let $({\bf G}, \sigma_G)$ be a real form with complex reductive group ${\bf G}$. A Borel pair $({\bf B}, {\bf T})$ of a complex reductive group ${\bf G}$ is called fundamental if the following conditions are satisfied:

{\rm (i)} $T={\bf T}^{\sigma_G}$ is a maximally compact subgroup of $G={\bf G}^{\sigma_G}$;

{\rm (ii)} The set of roots of ${\bf T}$ in ${\bf B}$ is stable under $-\sigma_G$.

Moreover, a fundamental Borel pair $({\bf B}, {\bf T})$ of ${\bf G}$ is called of Whittaker type if all the imaginary simple roots of ${\bf T}$ in ${\bf B}$ are non-compact.
\end{definition}  
By \cite[Prop. 6.24]{AV92}, a real form $({\bf G}, \sigma_{G})$ has a fundamental Borel pair of Whittaker type if and only if $({\bf G}, \sigma_G)$ is quasi-split. This applies to real symplectic groups and real orthogonal groups of equal rank. 

Let $({\bf G}, \sigma_G)$ be a quasi-split real form and we fix a maximal torus $\mathbf{T}$ of ${\bf G}$. Let $\mathfrak{g}$ (resp. $\mathfrak{t}$) be the Lie algebra of $\mathbf{G}$ (resp. $\mathbf{T}$). Then we have a root system $\Phi(\mathfrak{g},\mathfrak{t})$ associated to the pair $(\mathfrak{g},\mathfrak{t})$. Choose a simple root system $\Delta(\mathfrak{g},\mathfrak{t})$ whose elements are non-compact roots. This choice determines a Borel subgroup $\mathbf{B}$ of $\mathbf{G}$. Moreover, the Borel pair $(\mathbf{B},\mathbf{T})$ is a fundamental Borel pair of Whittaker type of $\mathbf{G}$.

Let $\Psi$ be the system of positive roots generated by $\Delta$ and let $\Psi_{c}\subset \Psi$ be the subset of compact positive roots. We set 
\[\rho(\Psi)=\frac{1}{2}\sum\limits_{\alpha\in\Psi}\alpha \quad \text{ and } \quad \rho_c(\Psi)=\frac{1}{2}\sum\limits_{\alpha\in\Psi_{c}}\alpha .\] Note that $\rho_c(\Psi)$ is independent of the choice of the set of positive roots and we simply denote them by $\rho_c$ respectively. 

\subsubsection{Symplectic case}\label{HCSp} Let $G=\mathrm{Sp}_{2n}({\bb R})$ and let $({\bf G},\sigma_G)$ be the real form corresponding to $G$. Let $\mathfrak{g}$ be the Lie algebra of $\mathbf{G}$. The maximally compact subgroup of $G$ is 
\[T=\{\left(\begin{smallmatrix}{\rm diag}(\cos t_1, \cdots, \cos t_n)& {\rm diag}(\sin t_1, \cdots, \sin t_n)\\ {\rm diag}(-\sin t_1, \cdots, -\sin t_n)&{\rm diag}(\cos t_1, \cdots, \cos t_n)\end{smallmatrix}\right):t_i\in {\bb R},1\leq i\leq n\}.\] The Lie algebra of $T$ is 
\begin{equation}
\mathfrak{t}_0=\{\left(\begin{smallmatrix}0_n & {\rm diag}(t_1, \cdots,t_n)\\ {\rm diag}(-t_1,\cdots, -t_n)& 0_n\end{smallmatrix}\right): t_i\in {\bb R}, 1\leq i\leq n\}.
\end{equation}
Let $\mathfrak{t}$ be the complexification of $\mathfrak{t}_0$. For $1\leq i\leq n$, we define the characters of $\mathfrak{t}$:
\[e_i: \left(\begin{smallmatrix}0_n & {\rm diag}(t_1, \cdots,t_n)\\ {\rm diag}(-t_1,\cdots, -t_n)& 0_n\end{smallmatrix}\right)\in \mathfrak{t} \mapsto 2it_i\in {\bb C}.\] 
Then we have $\mathfrak{t}^*=\oplus_{i=1}^n\mathbb{C}e_i$ and the set of roots is
\[\Phi(\mathfrak{g}, \mathfrak{t})=\{\pm e_i\pm e_j: 1\leq i< j\leq n\}\cup \{ \pm 2e_i: 1\leq i\leq n \}.\]

Let $\Delta(\mathfrak{g}, \mathfrak{t})$ be the subset $\{e_i+e_{n+1-i}, -e_{n+1-i}-e_{i+1} : 1\leq i \leq n-1 \}$ of $\Phi(\mathfrak{g}, \mathfrak{t})$, which is a set of non-compact simple roots. We denote by $\Psi$ the set of positive roots generated by $\Delta(\mathfrak{g}, \mathfrak{t})$.
Moreover, the subset of positive compact roots of $\Phi(\mathfrak{g}, \mathfrak{t})$ is $\Phi_c^+=\{e_i-e_j: 1\leq i< j\leq n\}$. This based root datum determines a fundamental Borel pair of Whittaker type of ${\bf G}$.

\subsubsection{Orthogonal case}\label{HCO} Let $G={\rm O}(p,q)$ with $p\geq q$ even. Let $({\bf G},\sigma_G)$ be the real form corresponding to $G$. Let $\mathfrak{g}$ be the Lie algebra of $\mathbf{G}$. Let $p_0=\frac{p}{2}$ and $q_0=\frac{q}{2}$. The maximally compact subgroup $T$ of $G$ consists of the following matrices:  \begin{equation*}    {\rm diag}((\begin{smallmatrix}\cos t_1 & \sin t_1\\ -\sin t_1& \cos t_1\end{smallmatrix}),\cdots,(\begin{smallmatrix}\cos t_{p_0} & \sin t_{p_0}\\ -\sin t_{p_0}& \cos t_{p_0}\end{smallmatrix}), \\(\begin{smallmatrix}\cos s_1 & \sin s_1\\ -\sin s_1& \cos s_1\end{smallmatrix}),\cdots,(\begin{smallmatrix}\cos s_{q_0} & \sin s_{q_0}\\ -\sin s_{q_0}& \cos s_{q_0}\end{smallmatrix})),
\end{equation*}
with $t_i,s_j\in {\bb R}$.

For $t\in {\bb R}$, we denote the matrix $(\begin{smallmatrix}0& t\\ -t& 0\end{smallmatrix})$ by $g(t)$. 
The Lie algebra $\mathfrak{t}_0$ of $T$
consists of matrices
$\{{\rm diag}(g(t_1),\cdots,g(t_{p_0}),g(s_1),\cdots, g(s_{q
_0}))$ with $ t_i, s_j\in {\bb R}$ for $1\leq i\leq p_0$ and $1\leq j\leq q_0$.
Let $\mathfrak{ t}$ be the complexification of $\mathfrak{t}_0$. For $1\leq i\leq p_0$ and $1\leq j\leq q_0$, we define the characters $e_i$ and $f_j$ of $\mathfrak{t}$ by the following laws:
\[e_i: {\rm diag}(g(t_1),\cdots,g(t_{p_0}),g(s_1),\cdots, g(s_{q
_0})) \mapsto 2it_i\in {\bb C}, \]
\[f_j: {\rm diag}(g(t_1),\cdots,g(t_{p_0}),g(s_1),\cdots, g(s_{q
_0}))\mapsto 2is_{j}\in {\bb C}.\] Then we have $\mathfrak{t}^*=\oplus_{i=1}^{p_0}\mathbb{C}e_i\bigoplus\oplus_{j=1}^{q_0}\mathbb{C}f_j$ and the set of roots is
\begin{equation*}
\begin{split}
\Phi(\mathfrak{ g}, \mathfrak{ t})=&\{\pm e_i\pm e_j: 1\leq i< j\leq p_0\}\cup \{ \pm f_i\pm f_j: 1\leq i<j\leq q_0\}\\
& \cup \{\pm e_i\pm f_j: 1\leq i\leq p_0, 1\leq j\leq q_0\}\end{split}
\end{equation*} 

Let $\Delta(\mathfrak{g},\mathfrak{t})$ be the subset of $\Phi(\mathfrak{g},\mathfrak{t})$ defined as follows:
\begin{enumerate}
\item If $p=q$ or $p=q+2$, then \[\Delta(\mathfrak{g}, \mathfrak{t})=\{e_i-f_i,f_i-e_{i+1}\vert 1\leq i\leq q_0 \}\cup\{ e_{p_0}+f_{q_0} \};\]
\item If $p>q+2$, then $\Delta(\mathfrak{g},\mathfrak{t})$ is a set of following non-compact simple roots: \[\{e_i-f_i,f_i-e_{i+1}\vert 1\leq i\leq q_0 \}\cup\{ e_j-e_{j+1}\vert q_0+1\leq j\leq p_0 \}\cup\{e_{p_0-1}+e_{p_0} \}.\]
\end{enumerate}

We denote by $\Psi$ the set of positive roots generated by $\Delta(\mathfrak{g},\mathfrak{t})$.
Moreover, the subset of positive compact roots $\Phi_c^+$ of $\Phi(\mathfrak{g},\mathfrak{t})$ is 
\[\{e_i-e_j: 1\leq i< j\leq p_0\}\cup\{f_i-f_j: 1\leq i< j\leq q_0\}.\] This based root datum determines a fundamental Borel pair of Whittaker type of ${\bf G}$.

\subsection{Generalities on real forms and Cartan involutions}\label{RealCartan}
We recall some structure theories of real reductive groups and the parameterization of real forms. Our main reference is \cite{AT18}.
\begin{definition}
Let ${\bf G}$ be a complex reductive group.
\begin{enumerate}
    \item We say that two real forms $\sigma_1$, $\sigma_2$ are inner to each other, or in the same {\it inner class}, if $\sigma_1\sigma_2^{-1}$ is an inner automorphism of ${\bf G}$.
    \item We say that two real forms $\sigma_1$, $\sigma_2$ are equivalent, if they are conjugate by an inner automorphism of ${\bf G}$. 
    \item A real form $\sigma$ of ${\bf G}$ is said to be a compact real form if ${\bf G}^{\sigma}$ is compact and meets every component of ${\bf G}$.
\end{enumerate}
\end{definition}
Given a real reductive group $G$, it is equivalent to provide a real form $\sigma_G$, which satisfies $(G\otimes_{\bb R} {\mathbb{C}})^{\sigma_G}=G$.

\begin{remark}\label{equivclH1}
The standard definition of equivalence of real forms (cf. \cite[Section III.1]{Serre02}) allows conjugation by ${\rm Aut}({\bf G})$. But since we are interested in the inner class, we follow the definition of Adams and Ta\"ibi in \cite{AT18}.
Moreover, for a real form $\sigma$ of ${\bf G}$, by \cite[Lemma 8.1]{AT18}, the set of equivalent classes of real forms in the inner class of $\sigma$ is parametrized by $H^1(\sigma, {\bf G}_{\rm ad})$, where ${\bf G}_{\rm ad}$ is the adjoint group.
Explicitly, the map is ${\rm cl}(h) \mapsto [{\rm int}(h)\circ \sigma]$.
By \cite[Lemma~2.4]{AT18}, for an equivalence class $[\sigma]$, we have a well-defined pointed set $H^1([\sigma],{\bf G}_{\rm ad})=H^1(\sigma,{\bf G}_{\rm ad})$.
\end{remark}
\begin{remark}\label{CptRealForm}
By \cite[Lemma 3.4]{AT18}, our definition of compact real form is equivalent to the definition of Mostow \cite[Section 2]{Mos55}, which defines a compact real form to be a compact subgroup $G_K$ of $\mathbf{G}$ such that $${\rm Lie}({\bf G})={\rm Lie}(G_K)\oplus i\cdot \mathrm{Lie}(G_K),$$ and $G_K$ meets every connected component of $\mathbf{G}$. The bijection is given by $\sigma \mapsto {\bf G}^{\sigma}$.
Every complex reductive group has a compact real form (Weyl, Chevalley, Mostow \cite[Lemma~6.2]{Mos55}), and the uniqueness is up to the ${\bf G}^0$ conjugation (Cartan, Hochschild, Mostow \cite[Ch. XV]{Ho65}, \cite[Theorem 3.1]{Mos55}), where ${\bf G}^0$ is the identity component of $\mathbf{G}$.
\end{remark}

Cartan involution provides a description of real forms in terms of holomorphic involutions which is better suited to our purposes.
\begin{definition}
A {\it Cartan involution} for $({\bf G}, \sigma)$, where $\sigma$ is a real form of complex reductive group ${\bf G}$, is a holomorphic involutive automorphism $\theta$ of ${\bf G}$, commuting with $\sigma$, such that $\theta\sigma$ is a compact real form of $\bf G$.
\end{definition}
The existence and uniqueness (up to conjugation by ${\rm Inn}({\bf G}^0)$) of Cartan involution, and the correspondence between
    \begin{equation*}
    \{\text{antiholomorphic involutive automorphisms of ${\bf G}$}\}/{\rm Inn}({\bf G}^0)
    \end{equation*}
    and 
    \begin{equation*}
    \{\text{holomorphic involutive automorphisms of ${\bf G}$}\}/{\rm Inn}({\bf G}^0)
    \end{equation*}
induced by the correspondence between real forms and Cartan involutions, are given by \cite[Theorem~3.13]{AT18}, based on Remark \ref{CptRealForm}.
The following construction also gives an explanation.

Fix a Borel pair $({\bf B}, {\bf T})$ of ${\bf G}$.
If $\sigma$ is a real form of ${\bf G}$ and $\theta$ is a Cartan involution for $({\bf G}, \sigma)$, then both $\sigma$ and $\theta$ naturally act on the based root datum $D_b$ attached to $({\bf B}, {\bf T})$, giving rise to two involutions $\overline{\sigma}, \overline{\theta}\in {\rm Aut}(R({\bf G},{\bf T}),\Delta)$,
which are also seen as elements of the subgroup ${\rm Out}({\bf G})[2]$ of order $2$ elements in ${\rm Out}({\bf G})$.
They are related by $\overline{\sigma}\overline{\theta}=-w_0$, where $w_0$ is the longest element of the Weyl group of $D_b$ and $-1$ is the inversion automorphism of ${\bf T}$.
Note that $w_0$ is invariant under ${\rm Aut}(R({\bf G},{\bf T}),\Delta)$, and so $\iota:=-w_0$ is a central involution in ${\rm Out}({\bf G})$.
As a result, we see that the set of inner classes of real forms of ${\bf G}$ can be parametrized by ${\rm Out}({\bf G})[2]$,
and we say that a real form $\sigma$ lies in the inner class defined by $\delta\in {\rm Out}({\bf G})[2]$ if $\iota\delta=\overline{\sigma}$. We also say its corresponding Cartan involution lies in the inner class defined by $\delta$.

\begin{definition}\label{equalrk}The real forms in the inner class defined by $1\in {\rm Out}({\bf G})[2]$ are called of equal rank.
\end{definition}

Every real form ${\bf G}(\bb R)$ in this inner class contains a compact Cartan subgroup, and every Cartan involution in this inner class is contained in ${\rm Inn}({\bf G})$.
The reason why this inner class is of our interest is that a necessary and sufficient condition for ${\bf G}(\bb R)$ to admit discrete series is that it has a compact Cartan subgroup, by \cite[Theorem~13]{Ha66}.

For an inner class $\delta\in {\rm Out}({\bf G})[2]$ and a pinning ${\bf Spl}_{{\bf G}}=({\bf B}, {\bf T}, \{X_\alpha\}_{\alpha\in \Delta})$, there is a unique real form $\sigma_{\rm qs}(\delta,{\bf Spl_G})$ of ${\bf G}$ preserving ${\bf Spl_G}$ and such that $\overline{\sigma_{\rm qs}(\delta,{\bf Spl_G})}=\iota\delta$, and it is naturally a quasi-split real form\footnote{A real form is called quasi-split if it preserves a Borel subgroup of ${\bf G}$.}.
Since for $g\in {\bf G}_{\rm ad}$, we have $\sigma_{\rm qs}(\delta,{\rm int}(g)({\bf Spl_G}))={\rm int}(g)\circ \sigma_{\rm qs}(\delta,{\bf Spl_G}) \circ {\rm int}(g)^{-1}$, the equivalence class of $\sigma_{\rm qs}(\delta,{\bf Spl_G})$ does not depend on the choice of ${\bf Spl_G}$. We denote this unique equivalence class of quasi-split forms in the inner class defined by $\delta$ as $[\sigma_{\rm qs}(\delta)]$. Denote its corresponding (inner class of) Cartan involution $[\theta_{{\rm qs}}(\delta)]$.

Conversely, the above construction contains all quasisplit real forms of ${\bf G}$.
The conjugacy class of quasi-split real forms has a particular interest as they can be characterized by Borel pairs with special properties.
\begin{definition}
We say a real form is {\it quasi-compact} if one (equivalently, any) of its Cartan involutions is distinguished.
\end{definition}
For each pinning ${\bf Spl_G},$ each inner class $\delta\in {\rm Out}({\bf G})[2]$ contains a unique quasi-compact real form $\sigma_{{\rm qc}}(\delta,{\bf Spl_G})$, whose corresponding (inner class of) Cartan involution is denoted by $\theta_{{\rm qc}}(\delta,{\bf Spl_G})$ (\cite[Section~5]{AC09}). Similarly, the equivalent class of $\sigma_{{\rm qc}}(\delta,{\bf Spl_G})$ does not depend on the choice of ${\bf Spl_G}$. We denote this unique class of quasi-compact forms in the inner class defined by $\delta$ as $[\sigma_{\rm qc}(\delta)]$. Denote its corresponding (inner class of) Cartan involution $[\theta_{{\rm qc}}(\delta)]$. Notice that an inner involution is distinguished if and only if it is the identity; this is the Cartan involution of the compact real form, i.e. $1\in [\theta_{\rm qc}(1)]$.

\begin{remark}\label{equivclH1forcartan}
Similarly to Remark \ref{equivclH1}, for a Cartan involution $\theta$ of ${\bf G}$, the set of equivalence classes of Cartan involutions in the inner class of $\theta$ is parametrized by $H^1(\theta, {\bf G}_{\rm ad})$, where ${\bf G}_{\rm ad}$ is the adjoint group.
Explicitly, the map is given by ${\rm cl}(h) \mapsto [{\rm int}(h)\circ \theta]$.
Again by \cite[Lemma~2.4]{AT18}, for equivalence class $[\theta]$, we have a well-defined pointed set $H^1([\theta],{\bf G}_{\rm ad})=H^1(\theta,{\bf G}_{\rm ad})$.

Furthermore, for the corresponding Cartan involution $\theta$ and real form $\sigma$, the following diagram commutes:
\[\begin{tikzcd}
	{H^1(\sigma,{\bf G}_{\rm ad})} & {\{\text{real forms inner to }\sigma}\} \\
	{H^1(\theta,{\bf G}_{\rm ad})} & {\{\text{Cartan involutions inner to }\theta}\}
	\arrow["\cong", from=1-1, to=1-2]
	\arrow["\cong", from=1-1, to=2-1]
	\arrow["\cong", from=1-2, to=2-2]
	\arrow["\cong", from=2-1, to=2-2]
\end{tikzcd}\]
Here ${H^1(\sigma,{\bf G}_{\rm ad})}\cong {H^1(\theta,{\bf G}_{\rm ad})}$ is the canonical bijection of pointed sets (\cite[Lemma~8.4]{AT18}).
\end{remark}

\section{Parameters for real reductive group of equal rank}

Let $G$ be a real reductive group of equal rank. Let $\mathfrak{g}$ be the Lie algebra of $G_{\bb C}$ and $K$ a maximal compact subgroup of $G_{\bb C}$. We fix a fundamental Borel pair $({\bf B}_{*}, {\bf T}_{*})$ of Whittaker type and the based root datum $(X, \Phi, \Delta_{*}, X^{\vee}, \Phi^{\vee}, \Delta_{*}^{\vee})$ associated to $({\bf B}_{*}, {\bf T}_{*})$. Let $\mathfrak{t}$ be the Lie algebra of $\mathbf{T}_{*}$ and $\mathfrak{t}_0$ be the Lie algebra of $\mathbf{T}_*\cap G$. Let $\Psi_{*}$ be the set of positive roots generated by $\Delta_{*}$ and $\Psi_{c,*}$ be its subset of compact positive roots. For $\lambda\in \mathfrak{t}^*$, we say $\lambda$ is regular (resp. integral) if $\langle\lambda, \alpha^{\vee}\rangle\in {\bb Z}\setminus \{0\}$ (resp. is in ${\bb Z}$) for all $\alpha\in \Delta_*$. Let $\rho_c$ be the half sum of the roots in $\Psi_{c,*}$.

\subsection{Harish-Chandra parameters}

Let $\mathrm{Rep}(G)$ be the set of irreducible representations of $G$. For any $\pi\in \mathrm{Rep}(G)$, it has a $(\mathfrak{g},K)$-module structure. As a result, we have following two data attached to $\pi$:

(1) There is an ``infinitesimal character map''
\[ \mu: \mathrm{Rep}(G)\rightarrow {\rm Hom}_{{\bb C}-{\rm alg}}(Z(\mathfrak{ g}), {\bb C}),\pi\mapsto \mu_\pi,\]
and by Harish-Chandra's finiteness theorem \cite[Theorem~5.5.6]{Wal88}, this map has finite fibres. By Harish-Chandra homomorphism
\footnote{The Poincare-Birkhoff-Witt theorem implies that we have a decomposition of $U(\mathfrak g)$:
\[U(\mathfrak{ g})=U(\mathfrak{ t})\oplus (U(\mathfrak{ g}) \mathfrak{ n}^++\mathfrak{ n}^{-}U(\mathfrak g)),\] and the projection of $Z(\mathfrak{g})$ to the second factor lies in $U(\mathfrak{ g}) \mathfrak{n}^+\cap\mathfrak{ n}^{-}U(\mathfrak g)$. Let $\gamma^{'}: Z(\mathfrak{ g})\rightarrow Z(\mathfrak{ t})$ be the projection on to the first factor. Let $\rho$ be the half the sum of the positive roots associated to $\Phi$. Let $t_{\rho}: \mathfrak{ t}\rightarrow U(\mathfrak{ t})$ be the translation operator $t_{\rho}(h)=h-\rho(h)1$. The composition $\gamma := t_{\rho}\circ\gamma^{'}: Z(\mathfrak{ g})\rightarrow U(\mathfrak{ t})$ is a homomorphism, known as the Harish-Chandra homomorphism. The image of the Harish-Chandra homomorphism is invariant under the action of Weyl group, and the map is actually an isomorphism $\gamma : Z(\mathfrak{ g})\rightarrow U(\mathfrak{t})^W = S(\mathrm{t})^W = {\bb C}[\mathfrak{ t}^*]^W$. }, we can lift the infinitesimal character $\mu_\pi$ to a character of $\mathfrak{ t}$ and such a lifting is not unique. We say an infinitesimal character is regular (resp. integral) if one of its liftings is regular (resp. integral). Note that if one lifting of $\mu_\pi$ is regular (resp. integral), then all the liftings of $\mu_\pi$ are regular (resp. integral). We say $\pi$ is a limit of discrete series if its infinitesimal character is integral. 

(2) The restriction of $\pi$ to $K$ can be decomposed into a completed direct sum of irreducible representations of $K$. An irreducible representation $\tau$ of $K$ appearing in the decomposition of $\pi|_K$ is called a $K$-type of $\pi$, which can be parameterized by its highest weight $\mu_{\tau}\in \mathfrak{t}^*$. Vogan defined a norm on the set of $K$-types of $\pi$ as following:
\[\Vert \tau\Vert:=\sqrt{\langle \mu_\tau+2\rho_c, \mu_{\tau}+2\rho_c \rangle}. \] The minimal $K$-type of $\pi$ is a $K$-type of $\pi$ with minimal norm among all $K$-types of $\pi$ (cf. \cite[Definition 5.1]{Vogan79}).

Let $\pi$ be an irreducible representation of $G$ with integral infinitesimal character $\mu_{\pi}$. 
Suppose $\tau_0$ is the minimal $K$-type of $\pi$. Consider the set $Y$ of pairs $(\lambda,\Psi)$, where $\lambda\in i\mathfrak{t}_0^{*}\subset \mathfrak{ t}^{*}$ is a lifting of $\mu_\pi$, and $\Psi\subset \Phi$ is the set of positive roots with respect to $\lambda$ satisfying:
 \begin{enumerate}[(a)]
\item $\Psi_{c,*}\subset \Psi$;
\item $\lambda$ is dominant with respect to  $\Psi$;
\item if a simple root $\alpha\in \Psi$ satisfies $\langle \lambda, \alpha^{\vee}\rangle=0$, then $\alpha$ is non-compact. 
\end{enumerate}
Then by \cite[Corollary 3.44]{Vogan84}, there is only one pair $(\lambda_\pi, \Psi_\pi)\in Y$ satisfying the equation \[ \mu_{\tau_0} = \lambda_\pi + \rho(\Psi_\pi) - 2\rho_c,\] where $\rho(\Psi_\pi)$ is the half sum of the roots in $\Psi_\pi$. We call the triple $(\lambda_\pi,\Psi_\pi,\mu_{\tau_0})$ the Harish-Chandra parameter of $\pi$.

\begin{definition}
A limit of discrete series Harish-Chandra parameter of $G$ is a pair $(\lambda_d, \Psi)$, where $\lambda_d\in i\mathfrak{t}_0^{*}\subset \mathfrak{ t}^{*}$ is integral, and $\Psi\subset \Phi$ is the set of positive roots with respect to $\lambda_d$ satisfying:
 \begin{enumerate}[(a)]
\item $\Psi_{c,*}\subset \Psi$;
\item $\lambda_d$ is dominant with respect to  $\Psi$;
\item if a simple root $\alpha\in \Psi$ satisfies $\langle \lambda_d, \alpha^{\vee}\rangle=0$, then $\alpha$ is non-compact. 
\end{enumerate}

\end{definition}

\begin{remark}
    Given a limit of discrete series Harish-Chandra parameter $(\lambda_d,\Psi)$ of $G$, if $\lambda_d$ is regular, then conditions $(a)$, $(b)$ and $(c)$ uniquely determine the set $\Psi$. In this case, we call $\lambda_d$ a discrete series Harish-Chandra parameter of $G$. If $\lambda_d$ is not regular, there are several possible $\Psi$'s admitting the same conditions $(a)$, $(b)$ and $(c)$. 
\end{remark}

\subsubsection{Symplectic case}
The explicit description of the Harish-Chandra parameters of the representations of real symplectic groups can be given as follows.
Let $\Psi$ and $\Delta$ be the sets of roots and simple roots of $G= {\rm Sp}_{2n}(\mathbb{R})$ defined in \S \ref{HCSp} respectively and let $\pi$ be a limit of discrete series representation of $G$.
Then the Harish-Chandra parameter $(\lambda_\pi, \Psi_\pi,\mu_\pi)$ of $\pi$, where
\begin{enumerate}
\item[$\bullet$] $\mu_\pi$ is the highest weight of the minimal $K$-type of $\pi$ and \[\mu_{\pi} = \lambda_\pi +\rho(\Psi_\pi) -2 \rho_c; \]
\item[$\bullet$] Note that $\mathfrak{t}^*=\oplus_{i=1}^n \mathbb{C}e_i$, then the parameter $\lambda_\pi$ with respect to the basis $\{e_i\}_{1\leq i\leq n}$ is of the form
\[\lambda_\pi=(\underbrace{\lambda_1,\cdots,\lambda_1}_{p_1},\cdots,\underbrace{\lambda_k,\cdots,\lambda_k}_{p_k},\underbrace{0,\cdots,0}_z,\underbrace{-\lambda_k,\cdots,-\lambda_k}_{q_k},\cdots,\underbrace{-\lambda_1,\cdots,-\lambda_1}_{q_1}),\]
with $\lambda_i\in\mathbb{Z}$, $\lambda_1>\cdots>\lambda_k>0$, $\vert p_i-q_i\vert \leq 1$;
\item[$\bullet$] $\Psi_\pi\subset \Phi$ is a root system containing all the positive compact roots, such that $\lambda_\pi$ is dominant with respect to $\Psi_\pi$, and for all simple roots $\alpha \in \Psi_\pi$ we have that if $\langle\lambda_\pi,\alpha \rangle=0$, then $\alpha$ is non-compact.

\item[$\bullet$] If $z=0$ and $p_i+q_i=1$ for all $i$, then the representation associated to $(\lambda_d,\Psi)$ is a discrete series. 
\end{enumerate}

\subsubsection{Orthogonal case}In the following, we recall the Harish-Chandra parametrization for representations of the orthogonal group of equal rank ${\rm O}(p,q)$, with $p$ and $q$ two non-negative integers. We will parametrize the representations of ${\rm O}(p,q)$ via the parameters of its maximal compact subgroup $K={\rm O}(p)\times {\rm O}(q)$. The equal rank condition implies that $p$ and $q$ are even if $p+q$ is even. 

Set $p_0=[\frac{p}{2}]$ and $q_0=[\frac{q}{2}]$.
We can parametrize an irreducible representation of compact group $\mathrm{O}(p)$ by $(\lambda_0;\epsilon)$, here $\lambda_0=(a_1,\cdots,a_{p_0})$ is the usual highest weight of a finite dimensional representation of $\mathrm{SO}(p)$ and $\epsilon\in\{\pm 1\}$. If $p$ is even and $a_{p_0}>0$, $(\lambda_0;1)$ and $(\lambda_0;-1)$ correspond to the same representation of $\mathrm{O}(p)$. If $p$ is odd then $-\mathrm{Id}$ acts by $(-1)^{\sum_{i=1}^{p_0}a_i}\epsilon$. If $p$ is even, the parameter of the trivial representation of $\mathrm{O}(p)$ corresponds to $(0,\cdots,0;1)$, the sign representation of $\mathrm{O}(p)$ corresponds to $(0,\cdots,0;-1)$, and we have $(a_1,\cdots,a_{[\frac{p}{2}]};\epsilon)\otimes\mathrm{sgn}=(a_1,\cdots,a_{[\frac{p}{2}]};-\epsilon)$. The representations of $\mathrm{O}(q)$ can be parametrized in the same way. Hence an irreducible finite dimensional representation of $K$ is parametrized by $(a_1,\cdots,a_{p_0};\epsilon)\otimes(b_1,\cdots,b_{q_0};\epsilon^{'})$. We will refer to $(a_1,\cdots,a_{p_0};b_1,\cdots,b_{q_0})$ as the highest weight, and to $(\epsilon,\epsilon')$ as the signs of the $K$-type.

Let $J=\mathrm{diag}(1,\cdots,1,-1)\in \mathrm{O}(p,q)\setminus \mathrm{SO}(p,q)$ and let $\sigma$ be the automorphism of $\mathrm{O}(p,q)$ giving by the conjugation by $J$. Note that $\sigma$ also acts on representations, Cartan subgroups and Lie algebra of $\mathrm{O}(p,q)$. Let $\pi$ be a limit of discrete of series representation of $\mathrm{O}(p,q)$. Then the restriction of $\pi$ to $\mathrm{SO}(p,q)$ is: 
\begin{equation*}
	\pi\vert_{\mathrm{SO}(p,q)}= \left\{\begin{smallmatrix}
		\pi, &\text{ if } \pi\vert_{\mathrm{SO}(p,q)} \text{ is irreducible};\\
		\pi_0\oplus\sigma(\pi_0), & \text{ otherwise.}
	\end{smallmatrix}\right.
\end{equation*} 
Here $\pi_0$ is an irreducible admissible representation of $\mathrm{SO}(p,q)$ with $\pi_0$ and $\sigma(\pi_0)$ are non-equivalent. In both cases, we choose the irreducible subrepresentation of $\pi\vert_{\mathrm{SO}(p,q)}$ with Harish-Chandra parameter $(\lambda_{\pi},\Psi_{\pi})$ described as follows: let $\Phi,\Delta$ be the sets of roots and simple roots of ${\rm O}(p,q)$ respectively defined in \S \ref{HCO}. Note that $\mathfrak{t}^*=\oplus_{i=1}^{p_0}\mathbb{C}e_i\bigoplus\oplus_{j=1}^{q_0}\mathbb{C}f_j$. 
\begin{enumerate}
	\item[$\bullet$] The parameter $\lambda_\pi$ with respect to the basis $\{e_i,f_j\}_{1\leq i\leq p_0,1\leq j\leq q_0}$ is of the form
	\[\lambda_\pi=(\underbrace{\lambda_1,\cdots,\lambda_1}_{p_1},\cdots,\underbrace{\lambda_k,\cdots,\lambda_k}_{p_k},\underbrace{0,\cdots,0}_z,\underbrace{\lambda_1,\cdots,\lambda_1}_{q_1},\cdots,\underbrace{\lambda_k,\cdots,\lambda_k}_{q_k},\underbrace{0,\cdots,0}_{z^{'}}),\] with 
	\begin{enumerate}
		\item $\lambda_k\in\mathbb{Z}$, $\lambda_1>\cdots>\lambda_k>0$, $\vert p_i-q_i\vert \leq 1$, 
		\item $\vert z-z^{'}\vert\leq 1$, 
		\item $p_0=\sum_{i=1}^{k} p_i+z$ and $q_0=\sum_{i=1}^{k}q_i+z^{'}$;
	\end{enumerate}
	\item[$\bullet$] $\Psi_\pi\subset \Phi$ is a root system containing all the positive compact roots, such that $\lambda_\pi$ is dominant with respect to $\Psi_\pi$.
\end{enumerate}

To parametrize $\pi$, we need one more parameter $\xi\in\{\pm 1\}$ called the sign of $\pi$. 
\begin{enumerate}
	\item[$\bullet$] If $\pi\vert_{\mathrm{SO}(p,q)}$ is irreducible, then $\pi\vert_{\mathrm{SO}(p,q)}$ has two liftings $\pi$ and $\pi\otimes\mathrm{det}$ to $\mathrm{O}(p,q)$. We choose the sign $+1$ for the representation $\pi$ and $-1$ for $\pi\otimes\mathrm{det}$; 
	\item[$\bullet$] If $\pi\vert_{\mathrm{SO}(p,q)}$ is reducible,  there is only one representation $\pi$ of $\mathrm{O}(p,q)$ whose restriction to $\mathrm{SO}(p,q)$ is $\pi_0\oplus\sigma(\pi_0)$. In this case the sign is arbitrary, we choose the sign $+1$.
	
\end{enumerate}

The triple $(\lambda_\pi,\xi,\Psi_\pi)$ is called the Harish-Chandra parameter of $\pi$.

\subsection{Real reductive groups of equal rank}\label{Realclassicalgroup}
We are interested in the real reductive groups of equal rank\footnote{The real form in the equal rank case is called pure inner form in the sense of Kaletha (cf. \cite{Kal16}).}, since the real forms admit discrete series representations if and only if it is of equal rank. Note that, if $n$ is odd (resp. even), then the split (resp. quasi-split) orthogonal groups of rank $n$ are not of equal rank. Thus for the even special orthogonal group, we will take the quasi-split (resp. split) orthogonal groups of rank $n$ when $n$ is odd (resp. even).

Through the general study on the strong real forms, we fix an inner class $\delta\in {\rm Out}({\bf G})[2]$. Consider the action of ${\rm Aut}({\bf G})$ on the center ${\bf Z}:=Z({\bf G})$ of ${\bf G}$, which factors through the quotient ${\rm Out}({\bf G})$.
We denote by ${\bf Z}^{\delta}$ the subgroup of ${\bf Z}$ fixed by $\delta$.

Recall that in \S \ref{RealCartan}, we denoted the unique inner class of quasi-split form (resp. quasi-compact form) defined by $\delta$, by $[\sigma_{{\rm qs}}(\delta)]$ (resp. $[\sigma_{{\rm qc}}(\delta)]$), together with their corresponding inner class of Cartan involution by $[\theta_{{\rm qs}}(\delta)]$ (resp. $[\theta_{{\rm qc}}(\delta)]$). For a fixed pinning ${\bf Spl_G}=\{B,H, \{X_\alpha\}\}$, we denoted the unique quasi-split form (resp. quasi-compact form) defined by $\delta$, by $\sigma_{{\rm qs}}(\delta,{\bf Spl_G})$ (resp. $\sigma_{{\rm qc}}(\delta,{\bf Spl_G})$), together with their corresponding Cartan involution by $\theta_{{\rm qs}}(\delta,{\bf Spl_G})$ (resp. $\theta_{{\rm qc}}(\delta,{\bf Spl_G})$).

In this section, we start with two perspectives of definitions on strong involutions and strong real forms, and explain how these two perspectives coincide. In particular, in the equal rank case, this allows us to use the set $\{h\in {\bf H}: h^2=z\}/W$, where ${\bf H}$ is a Cartan subgroup of ${\bf G}$ with Weyl group $W$, to parametrize the equivalence classes of strong real forms of ${\bf G}$ (Proposition \ref{srf1}).
\subsubsection{Strong Involution}
Denote $\Gamma=\{1,\theta_{{\rm qc}}(\delta,{\bf Spl_G})\}$. Define the extended group for $({\bf G},\delta)$ as the semi-direct product
$${\bf G}^\Gamma = {\bf G} \rtimes \Gamma.$$

\begin{definition}\cite[Definition 2.13]{ABV91}\label{SIgamma}
\begin{enumerate}
\item A {\it strong involution} of ${\bf G}$ in the inner class defined by $\delta$ is an element $\xi \in {\bf G}^\Gamma\setminus {\bf G}$ such that $\xi^2\in {\bf Z}_{\rm tor}$. The set of such strong involutions is denoted by ${\rm SI}_{\delta,{\bf Spl_G}}({\bf G})$.
Moreover, to a strong involution $\xi \in {\rm SI}_{\delta,{\bf Spl_G}}({\bf G})$, we can associate a central invariant
$${\rm Inv}(\xi)=\xi^2 \in {\Z}_{\rm tor}^{\delta}.$$

\item Two strong involutions $\xi, \xi^{'}$ are said to be equivalent if there exists $g\in {\bf G}$, such that $\xi=g\xi^{'}g^{-1}$. We denote by $[{\rm SI}_{\delta,{\bf Spl_G}}({\bf G})]$ the set of equivalence classes in ${\rm SI}_{\delta,{\bf Spl_G}}({\bf G})$.
\end{enumerate}
\end{definition}
Since the choice of quasi-compact form in an inner class only depends on the choice of the pinning of ${\bf G}$, where two pinnings ${\bf Spl_G}$ and ${\bf Spl_G}^{'}$ differ from an inner involution, say, there exists $h \in {\bf G}$, such that ${\bf Spl_G}={\rm int}(h)\circ {\bf Spl_G}^{'}$ (\cite[Proposition~2.8]{ABV91}). Thus $[{\rm SI}_{\delta,{\bf Spl_G}}({\bf G})]$ is independent of the choice of ${\bf Spl_G}$.

We denote by $[{\rm SI}_\delta({\bf G})]$ the set of equivalence classes of strong involutions defined by $\delta$. Note that two equivalent strong involutions $\xi, \xi^{'}$ have the same central invariant. Hence the central invariant is well defined for $[{\rm SI}_\delta({\bf G})]$.

For $\xi \in [{\rm SI}_\delta({\bf G})]$, let $\theta_\xi ={\rm int}(\xi)$ be the corresponding Cartan involution in the inner class defined by $\delta$. In fact, $\xi \mapsto \theta_\xi$ induces a surjection:
$$[{\rm SI}_\delta({\bf G})] \twoheadrightarrow  \{\text{equivalence classes of cartan involutions in the inner class defined by $\delta$} \} . $$
To any Cartan involution $\theta$ of ${\bf G}$ in the inner class defined by $\delta$, one identifies $[\theta]$ with a class in $H^1([\theta_{\rm qc}(\delta)], {\bf G}_{\rm ad})$, by Remark \ref{equivclH1forcartan}, and defines a central invariant ${\rm Inv}([\theta])\in {\bf Z}^{\delta}/(1+\delta){\bf Z}$ of the equivalence class $[\sigma]$,
using the map
\[H^1([\theta_{\rm qc}(\delta)], {\bf G}_{\rm ad})\rightarrow H^2([\theta_{\rm qc}(\delta)], {\bf Z})\cong \widehat{H}^0(\theta, {\bf Z})\cong {\bf Z}^{\theta}/(1+\theta){\bf Z}\cong {\bf Z}^{\delta}/(1+\delta){\bf Z}.\]
The first map is from the connecting homomorphism of group cohomology for the exact sequence:
\[1\rightarrow {\bf Z}\rightarrow {\bf G}\rightarrow {\bf G}_{\rm ad}\rightarrow 1.\] The second and the third arrow are from properties of Tate cohomology, and the last one is from \cite[Lemma 8.6]{AT18}.
If two Cartan involutions $\theta_1, \theta_2$ of ${\bf G}$ live in the same inner class and have the same central invariant, then $H^1(\theta_1, {\bf G})\cong H^1(\theta_2, {\bf G})$ (cf. \cite[Lemma 8.10]{AT18}).

The following commutative diagram shows the compatibility of the two central invariants:
\[\xymatrix{[{\rm SI}_{\delta}({\bf G})]\ar[r]\ar[d]^{\rm Inv} & H^1([\theta_{\rm qc}(\delta)], {\bf G}_{\rm ad}) \ar[d]^{\rm Inv}  \\  {\bf Z}^{\delta}_{\rm tor}\ar[r] &{\bf Z}^{\delta}/(1+\delta){\bf Z}}\]
\begin{proposition}\label{CICartan}\cite[Lemma~6.10]{Ada18}
Suppose that $\theta$ is a Cartan involution of ${\bf G}$ in the inner class defined by $\delta$.
Choose a representative $z\in {\bf Z}^{\delta}_{{\rm tor}}$ of ${\rm Inv}([\theta]) \in {\bf Z}^{\delta}/(1+\delta) {\bf Z}$\footnote{For existence of such representative, see \cite[Lemma~2.15]{ABV91}.}. Then there is a bijection:
\[ H^1(\theta,{\bf G})\leftrightarrow\{\text{equivalent classes of strong involutions with central invariant } z \}.\]
\end{proposition}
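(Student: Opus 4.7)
The plan is to fix a distinguished strong involution $\xi_0 \in {\rm SI}_{\delta,{\bf Spl_G}}({\bf G})$ with ${\rm int}(\xi_0) = \theta$ and $\xi_0^2 = z$ exactly, and then show that $g \mapsto g \xi_0$ is a bijection from $Z^1(\theta, {\bf G})$ onto the set of strong involutions with central invariant $z$, which descends to the required bijection on equivalence classes. To construct $\xi_0$, pick any lift $\eta \in {\bf G}^\Gamma \setminus {\bf G}$ of $\theta$ with $\eta^2 \in {\bf Z}^\delta_{\rm tor}$, whose existence is the content of \cite[Lemma~2.15]{ABV91} cited in the footnote. The class of $\eta^2$ in ${\bf Z}^\delta/(1+\delta){\bf Z}$ is the central invariant ${\rm Inv}([\theta])$ by the commutative diagram preceding the proposition; since $z$ represents the same class, there exists $z' \in {\bf Z}$ with $z \cdot \eta^{-2} = z' \cdot \theta(z')$, and setting $\xi_0 := z' \eta$ yields ${\rm int}(\xi_0) = \theta$ and $\xi_0^2 = z'\,\theta(z')\,\eta^2 = z$.

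Next, I would define the map $\Phi \colon Z^1(\theta, {\bf G}) \to \{\text{strong involutions with central invariant } z\}$ by $g \mapsto g \xi_0$. The key identity $(g\xi_0)^2 = g \cdot \theta(g) \cdot \xi_0^2$ shows that $\Phi(g)$ has square $z$ precisely when $g\theta(g) = 1$, so $\Phi$ is well defined. Conversely, for any strong involution $\xi$ with $\xi^2 = z$, the element $g := \xi \xi_0^{-1}$ lies in ${\bf G}$ (both $\xi$ and $\xi_0$ belong to the same non-identity component of ${\bf G}^\Gamma$), and the same identity yields $g\theta(g) = \xi^2 \xi_0^{-2} = 1$, so $\Phi$ is a bijection onto this set. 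To descend to equivalence classes, I would use the identity $\xi_0 h^{-1} \xi_0^{-1} = \theta(h)^{-1}$, which gives
\[ h\,(g \xi_0)\,h^{-1} = \bigl(h\,g\,\theta(h)^{-1}\bigr)\,\xi_0 \]
for every $h \in {\bf G}$. Thus $g\xi_0 \sim g'\xi_0$ in the sense of Definition \ref{SIgamma} if and only if $g' = h\,g\,\theta(h)^{-1}$ for some $h$, which is precisely the coboundary relation defining $H^1(\theta, {\bf G}) = Z^1(\theta, {\bf G})/{\sim}$; hence $\Phi$ induces the asserted bijection.

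The only genuinely delicate point is the construction of $\xi_0$ with $\xi_0^2$ equal to the \emph{prescribed} representative $z \in {\bf Z}^\delta_{\rm tor}$, rather than to some unspecified torsion representative of ${\rm Inv}([\theta])$. This is resolved by combining the torsion lifting statement of \cite[Lemma~2.15]{ABV91} with the central adjustment $\eta \mapsto z'\eta$ described above; once $\xi_0$ is in hand, the remainder is a direct translation between the defining data of strong involutions and the non-abelian cohomology set $H^1(\theta, {\bf G})$.
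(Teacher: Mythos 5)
Your proof is correct. The paper does not prove Proposition~\ref{CICartan} itself but simply cites \cite[Lemma~6.10]{Ada18}, and your argument reproduces the standard mechanism behind that lemma: fix a base strong involution $\xi_0$ lifting $\theta$ with $\xi_0^2 = z$ and use left translation $g \mapsto g\xi_0$ to identify cocycles with strong involutions of central invariant $z$, then check that the conjugation equivalence on the right corresponds exactly to twisted-conjugacy (coboundary) equivalence on the left. Your central adjustment $\eta \mapsto z'\eta$ with $z'\theta(z') = z\eta^{-2}$ is precisely the right way to hit the prescribed representative $z$ rather than an arbitrary torsion representative of ${\rm Inv}([\theta])$, and the verification $(g\xi_0)^2 = g\,\theta(g)\,\xi_0^2$ together with $h(g\xi_0)h^{-1} = (h\,g\,\theta(h)^{-1})\xi_0$ handles both well-definedness and descent to equivalence classes. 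No gaps.
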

\subsubsection{Strong Real Forms and their relations}

\begin{definition}\cite[Definition 8.11]{AT18}
\begin{enumerate}
\item A {\it strong real form} in the inner class of $\sigma_{\rm qs}(\delta, {\bf Spl_G})$ is an element of 
\[{\rm SRF}_{\sigma_{\rm qs}(\delta, {\bf Spl_G})}({\bf G}):=Z^1(\sigma_{\rm qs}(\delta, {\bf Spl_G}), {\bf G}; {\bf Z}_{\rm tor})/(1+\sigma_{\rm qs}(\delta, {\bf Spl_G})){\bf Z},\]
where $Z^1(\sigma_{\rm qs}(\delta, {\bf Spl_G}), {\bf G}; {\bf Z}_{\rm tor}):=\{g\in {\bf G}: g\sigma_{\rm qs}(\delta, {\bf Spl_G})(g)\in {\bf Z}_{\rm tor}\}$.
Moreover, to a strong real form $g\in {\rm SRF}_{\sigma_{\rm qs}(\delta, {\bf Spl_G})}({\bf G})$, we can associate a central invariant
\[{\rm Inv}(g)=g\sigma_{\rm qs}(\delta, {\bf Spl_G})(g)\in {\bf Z}_{\rm tor}^{\delta}.\]

\item Two strong real forms $g, h$ are said to be equivalent if they map to a same element of $H^1(\sigma_{\rm qs}(\delta, {\bf Spl_G}), {\bf G}; {\bf Z}_{\rm tor}):=Z^1(\sigma_{\rm qs}(\delta, {\bf Spl_G}), {\bf G}; {\bf Z}_{\rm tor})/[g\sim t g\sigma_{\rm qs}(\delta, {\bf Spl_G})(t^{-1}), t\in {\bf G}]$.
\end{enumerate}
\end{definition}
Note that two equivalent strong real forms $g, h$ have the same central invariant. Note that for two pinnings ${\bf Spl_G}$ and ${\bf Spl_G}^{'}$ with 
$h \in {\bf G}$, such that ${\bf Spl_G}={\rm int}(h)\circ {\bf Spl_G}^{'}$,we have a bijection:
$${\rm SRF}_{\sigma_{\rm qs}(\delta, {\bf Spl_G})}({\bf G})\cong {\rm SRF}_{\sigma_{\rm qs}(\delta, {\bf Spl_G}^{'})}({\bf G}),\quad g\mapsto gh \sigma_{\rm qs(\delta,{\bf Spl_G}^{'})}(h)^{-1}.$$

Thus we define the set $[{\rm SRF}_{\delta}({\bf G})]$ of equivalence classes of strong real forms in the inner class defined by $\delta$, together with a central invariant map ${\rm Inv}: [{\rm SRF}_{\delta}({\bf G})]\rightarrow Z_{\rm tor}^{\delta}$. By the previous discussion, $[{\rm SRF}_{\delta}({\bf G})]$ is independent of the choice of ${\bf Spl_G}$.

As the pinning varies, maps $g\in {\rm SRF}_{\sigma_{\rm qs}(\delta, {\bf Spl_G})}\mapsto {\rm int}(g)\circ \sigma_{\rm qs}(\delta, {\bf Spl_G})$ are compatible and induce a surjection:
$$[{\rm SRF}_{\delta}({\bf G})]\twoheadrightarrow  \{\text{equivalence classes of real forms in the inner class defined by $\delta$} \} . $$

To any real form $\sigma$ of ${\bf G}$ in the inner class defined by $\delta$, one identifies $[\sigma]$ with a class in $H^1([\sigma_{\rm qs}(\delta)], {\bf G}_{\rm ad})$, by Remark \ref{equivclH1}, and defines a central invariant ${\rm Inv}([\sigma])\in {\bf Z}^{\delta}/(1+\delta){\bf Z}$ of the equivalence class $[\sigma]$,
using the map
\[H^1([\sigma_{\rm qs}(\delta)], {\bf G}_{\rm ad})\rightarrow H^2(\sigma_{\rm qs}(\delta), {\bf Z})\cong \widehat{H}^0(\sigma, {\bf Z})\cong {\bf Z}^{\sigma}/(1+\sigma){\bf Z}\cong {\bf Z}^{\delta}/(1+\delta){\bf Z}\]
which is similar to the Cartan involution case.

The following commutative diagram shows the compatibility of the two central invariants:
\[\xymatrix{[{\rm SRF}_{\delta}({\bf G})]\ar[r]\ar[d]^{\rm Inv} & H^1([\sigma_{\rm qs}(\delta)], {\bf G}_{\rm ad}) \ar[d]^{\rm Inv}  \\  {\bf Z}^{\delta}_{\rm tor}\ar[r] &{\bf Z}^{\delta}/(1+\delta){\bf Z}}\]
\begin{proposition}\label{CIReal}\cite[Proposition~8.14]{AT18}
Suppose that $\sigma$ is a real form of ${\bf G}$ in the inner class defined by $\delta$.
Choose a representative $z\in {\bf Z}^{\delta}_{{\rm tor}}$ of ${\rm Inv}([\sigma]) \in {\bf Z}^{\delta}/(1+\delta) {\bf Z}$. Then there is a bijection:
\[H^1(\sigma,{\bf G})\leftrightarrow \{\text{equivalent classes of strong real forms with central invariant } z \}.\]
\end{proposition}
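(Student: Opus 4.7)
The plan is to mirror the proof of Proposition \ref{CICartan} on the level of real forms, by exhibiting an explicit cocycle-theoretic bijection. Abbreviate $\sigma_{{\rm qs}}=\sigma_{{\rm qs}}(\delta,{\bf Spl_G})$. Using the parametrization from Remark \ref{equivclH1} one writes $\sigma={\rm int}(g_0)\circ\sigma_{{\rm qs}}$ for some $g_0\in {\bf G}$; the identity $\sigma^2={\rm Id}$ forces $g_0\sigma_{{\rm qs}}(g_0)\in {\bf Z}^{\delta}$, and up to a further adjustment inside ${\bf Z}$ one may arrange this element to lie in ${\bf Z}_{{\rm tor}}^{\delta}$, so $g_0$ itself is a strong real form.

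The first step is to normalize $g_0$ so that its central invariant is exactly $z$. Replacing $g_0$ by $g_0 z'$ for $z'\in {\bf Z}$ leaves ${\rm int}(g_0)$ unchanged but multiplies the central invariant by $z'\sigma_{{\rm qs}}(z')\in (1+\sigma_{{\rm qs}}){\bf Z}$. Since $\sigma_{{\rm qs}}$ acts on ${\bf Z}$ through $\delta$, we have $(1+\delta){\bf Z}=(1+\sigma_{{\rm qs}}){\bf Z}$, and the hypothesis that $z$ lifts ${\rm Inv}([\sigma])$ means $g_0\sigma_{{\rm qs}}(g_0)\equiv z\pmod{(1+\sigma_{{\rm qs}}){\bf Z}}$, so a suitable $z'$ exists.

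Next, the plan is to parametrize all strong real forms of central invariant $z$ by $1$-cocycles for $\sigma$. Writing an arbitrary element as $g=xg_0$ with $x\in {\bf G}$ and using $\sigma_{{\rm qs}}(x)=g_0^{-1}\sigma(x)g_0$ together with $\sigma_{{\rm qs}}(g_0)=g_0^{-1}z$, a direct computation gives
\[
g\sigma_{{\rm qs}}(g)=x\sigma(x)\cdot z.
\]
Hence $g$ has central invariant $z$ if and only if $x\sigma(x)=1$, i.e.\ $x\in Z^{1}(\sigma,{\bf G})$. The same substitution shows that the equivalence relation $g\sim tg\sigma_{{\rm qs}}(t)^{-1}$ translates into $x\sim t\,x\,\sigma(t)^{-1}$, which is the coboundary relation defining $H^{1}(\sigma,{\bf G})$. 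This produces the desired bijection, and the trivial cocycle corresponds to the class of $g_0$.

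The step I expect to be the main obstacle is verifying that every equivalence class of strong real form with central invariant exactly $z$ lies over a real form equivalent to $\sigma$, so that the map above in fact exhausts the right-hand side. This requires knowing that $H^{1}(-,{\bf G})$ is constant along fibres of the central invariant map for real forms in a fixed inner class, which is precisely \cite[Lemma~8.10]{AT18}; once this is granted, the commutative diagram comparing central invariants on $[{\rm SRF}_{\delta}({\bf G})]$ and on $H^{1}([\sigma_{{\rm qs}}(\delta)], {\bf G}_{{\rm ad}})$ recorded just before the statement identifies the fibre of the surjection $[{\rm SRF}_{\delta}({\bf G})]\twoheadrightarrow \{[\sigma']\}$ over $[\sigma]$ with our constructed $H^{1}(\sigma,{\bf G})$-torsor, completing the argument.
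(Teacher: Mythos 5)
The paper simply cites \cite[Proposition~8.14]{AT18} without giving a proof, so you are reconstructing rather than reproducing an argument; your reconstruction via twisting of cocycles is essentially the right idea and the central computation is sound. Writing $g=xg_0$ with $\sigma={\rm int}(g_0)\circ\sigma_{\rm qs}$, $g_0\sigma_{\rm qs}(g_0)=z$, the identity $g\sigma_{\rm qs}(g)=x\sigma(x)\,z$ is correct, as is the translation of the twisted equivalence $g\sim t g\,\sigma_{\rm qs}(t^{-1})$ into the $\sigma$-coboundary relation $x\sim t\,x\,\sigma(t)^{-1}$. One also needs the easy observation that the central invariant $g\sigma_{\rm qs}(g)$ is literally constant on an equivalence class (conjugate $g$ by $t$ and use centrality), so ``central invariant $z$'' on the right-hand side means the exact equality $g\sigma_{\rm qs}(g)=z$; with that, $g\mapsto gg_0^{-1}$ is already a bijection onto $Z^1(\sigma,{\bf G})$ and descends to the desired bijection on classes.

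Two points need repair. First, the assertion that ``$\sigma_{\rm qs}$ acts on ${\bf Z}$ through $\delta$'' is not correct as stated: the image of $\sigma_{\rm qs}$ in ${\rm Out}({\bf G})$ is $\iota\delta$ (not $\delta$), and $\sigma_{\rm qs}$ is antiholomorphic, so its action on ${\bf Z}$ also involves a complex conjugation. What you actually need is the identification ${\bf Z}^{\sigma_{\rm qs}}/(1+\sigma_{\rm qs}){\bf Z}\cong{\bf Z}^{\delta}/(1+\delta){\bf Z}$ at the level of the relevant (torsion) subgroups, which is the content of \cite[Lemma~8.6]{AT18} and is not a triviality; similarly, the existence of a lift $g_0\in{\bf G}$ with $g_0\sigma_{\rm qs}(g_0)$ actually landing in ${\bf Z}_{\rm tor}$ (not merely in ${\bf Z}$) is \cite[Lemma~2.15]{ABV91}, which should be invoked explicitly. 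Second, the final paragraph is a detour that does not belong: there is no surjectivity gap to fill. The map $x\mapsto xg_0$ is already a bijection $Z^1(\sigma,{\bf G})\to\{g\in Z^1(\sigma_{\rm qs},{\bf G};{\bf Z}_{\rm tor}):g\sigma_{\rm qs}(g)=z\}$ (its inverse $g\mapsto gg_0^{-1}$ is checked by the same computation), so every equivalence class with central invariant $z$ is reached, regardless of which real form inner to $\sigma$ lies underneath $g$; invoking \cite[Lemma~8.10]{AT18} and the commutative diagram over $H^1([\sigma_{\rm qs}],{\bf G}_{\rm ad})$ is neither necessary nor the right framing, since those strong real forms need not have underlying real form equivalent to $\sigma$ at all.
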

Recall that we have the canonical bijection of pointed sets ${H^1(\sigma,{\bf G}_{\rm ad})}\cong {H^1(\theta,{\bf G}_{\rm ad})}$ (see Remark \ref{equivclH1forcartan}), and two compatibilities of central invariant functions mentioned in this section. We have the following commutative diagram by composing them:
\[\begin{tikzcd}
	{[{\rm SRF}_{\delta}({\bf G})]} & {H^1([\sigma], {\bf G}_{\rm ad})} & {H^1([\theta], {\bf G}_{\rm ad})} & {[{\rm SI}_\delta({\bf G})]} \\
	{{\bf Z}^{\delta}_{\rm tor}} & {{\bf Z}^{\delta}/(1+\delta){\bf Z}} & {{\bf Z}^{\delta}/(1+\delta){\bf Z}} & {{\bf Z}^{\delta}_{\rm tor}}
	\arrow[from=1-1, to=1-2]
	\arrow["{{\rm Inv}}", from=1-1, to=2-1]
	\arrow["{\cong}", from=1-2, to=1-3]
	\arrow["{{\rm Inv}}", from=1-2, to=2-2]
	\arrow["{{\rm Inv}}", from=1-3, to=2-3]
	\arrow[from=1-4, to=1-3]
	\arrow["{{\rm Inv}}", from=1-4, to=2-4]
	\arrow[from=2-1, to=2-2]
	\arrow["{=}", from=2-2, to=2-3]
	\arrow[from=2-4, to=2-3]
\end{tikzcd}\]
This shows the compatibility of the central invariants for strong involutions and strong real forms (in ${\bf Z}^{\delta}/(1+\delta){\bf Z}$).

Note that there is a natural bijection between $[{\rm SRF}_{\delta}({\bf G})]$ and $[\mathrm{SI}_{\delta}({\bf G})]$ (\cite[Remark~8.13]{AT18}): fixing a pinning ${\bf Spl_G}$, choose $g_0\in {\rm SRF}_{\sigma_{\rm qs}(\delta, {\bf Spl_G})}$ such that $\sigma_{\rm qc}(\delta,{\bf Spl_G})=\mathrm{int}(g_0) \circ {\sigma_{\rm qs}(\delta, {\bf Spl_G})}$. Then by twisting $g_0$ (see \cite[Lemma~2.4]{AT18} and \cite[Corollary~4.7]{AT18}), we have
$$[{\rm SRF}_{\delta}({\bf G})]\cong H^1([\sigma_{\rm qc}],{\bf G};{\bf Z}_{\rm tor})\cong H^1([\theta_{\rm qc}],{\bf G};{\bf Z}_{\rm tor})\cong [\mathrm{SI}_{\delta}({\bf G})].$$

We conclude that the definitions of strong involutions and strong real forms are compatible, also their central invariants (in ${\bf Z}^{\delta}/(1+\delta){\bf Z}$). From now on, we do not differ $[\mathrm{SI}_{\delta}({\bf G})]$ ``the equivalence classes of strong involutions (in the inner class defined by $\delta$)'' and $[{\rm SRF}_{\delta}({\bf G})]$ ``the equivalence classes of strong real forms (in the inner class defined by $\delta$)''.

For the equal rank case $\delta=\theta_{\rm qc}=1$, we have an explicit description of equivalence classes of strong real forms using central invariants:
\begin{proposition}\cite[Proposition~8.16]{AT18}\cite[Corollary~6.14]{Ada18}\label{srf1}
Suppose that $\sigma$ is an equal rank real form of ${\bf G}$.
Choose $x\in {\bf G}$ such that ${\rm int}(x)$ is a Cartan involution for $\sigma$ and $z=x^2\in {\bf Z}$. Then we have an explicit bijection
\[H^1(\sigma,{\bf G})\leftrightarrow  S(z),\]
where $S(z)$ is the set of conjugacy classes of ${\bf G}$ with square equal to $z$. If ${\bf H}$ is a Cartan subgroup of ${\bf G}$ with Weyl group $W$, then $S(z)$ is equal to $\{h\in {\bf H}: h^2=z\}/W$.
\end{proposition}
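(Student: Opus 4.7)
The strategy is to chain together the identifications established in the earlier part of the section. By Proposition \ref{CIReal}, the choice of the representative $z\in {\bf Z}^\delta_{\rm tor}$ of ${\rm Inv}([\sigma])$ yields a bijection between $H^1(\sigma,{\bf G})$ and the set of equivalence classes of strong real forms in the inner class of $\sigma$ with central invariant $z$. The subsequent discussion (in particular the isomorphism $[{\rm SRF}_\delta({\bf G})]\cong [\mathrm{SI}_\delta({\bf G})]$ obtained by twisting through a cocycle $g_0$ realising $\sigma_{\rm qc}$ from $\sigma_{\rm qs}$) identifies the latter set with the equivalence classes of strong involutions of central invariant $z$ in the inner class defined by $\delta=1$.

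Next I would unwind Definition \ref{SIgamma} in the equal rank case $\delta=1$. Here $\theta_{\rm qc}(1,{\bf Spl_G})$ represents the trivial inner class, so the action of $\Gamma$ on ${\bf G}$ in the semi-direct product ${\bf G}^\Gamma={\bf G}\rtimes\Gamma$ is trivial. Writing $\xi=(x,\theta_{\rm qc})$, the condition $\xi^2\in {\bf Z}_{\rm tor}$ becomes $x^2\in {\bf Z}_{\rm tor}$, and the central invariant ${\rm Inv}(\xi)=\xi^2=x^2$. Conjugation of $\xi$ by $h\in {\bf G}$ reduces to ordinary conjugation of $x$ by $h$. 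Consequently, the equivalence classes of strong involutions with central invariant $z$ correspond bijectively to the ${\bf G}$-conjugacy classes of elements $x\in {\bf G}$ with $x^2=z$, i.e.\ to $S(z)$.

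For the last assertion, the key observation is that any $x\in {\bf G}$ with $x^2=z\in {\bf Z}$ is automatically semisimple: in the Jordan decomposition $x=x_sx_u$ with $[x_s,x_u]=1$, we have $x^2=x_s^2 x_u^2$, and uniqueness of Jordan decomposition together with the semisimplicity of $z$ forces $x_u^2=1$, hence $x_u=1$ in characteristic zero. Applying the standard fact that every semisimple element of ${\bf G}$ is conjugate into any chosen Cartan subgroup ${\bf H}$, and that two elements of ${\bf H}$ are ${\bf G}$-conjugate iff they are $W$-conjugate, yields $S(z)=\{h\in {\bf H}: h^2=z\}/W$.

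The main obstacle in this chain is bookkeeping rather than conceptual: one must verify that the twisting isomorphisms $[{\rm SRF}_\delta({\bf G})]\cong [\mathrm{SI}_\delta({\bf G})]$ and the bijection of Proposition \ref{CIReal} are genuinely compatible with the choice of representative $z$ (so that the bijection really restricts to classes of the given central invariant, rather than merely the correct image in ${\bf Z}^\delta/(1+\delta){\bf Z}$). This is addressed by the compatibility diagram displayed just before the proposition, together with the final step of translating the conjugacy class of $x$ into an element of the Cartan subgroup, which is purely structure-theoretic and routine once semisimplicity of $x$ has been established.
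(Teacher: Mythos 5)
The paper does not prove this proposition itself -- it cites \cite[Proposition~8.16]{AT18} and \cite[Corollary~6.14]{Ada18} -- so the question is whether your reconstruction from the surrounding material is correct. It is, in its main lines. The unwinding of Definition~\ref{SIgamma} in the equal-rank case is exactly right: with $\delta=1$ and $\theta_{\rm qc}=1$, the extended group ${\bf G}^\Gamma={\bf G}\times\Gamma$ is a direct product, so for $\xi=(x,\sigma)$ with $\sigma$ the nontrivial element of $\Gamma$ one has $\xi^2=(x^2,1)$ and conjugation by $(g,1)$ is ordinary conjugation of $x$; hence equivalence classes of strong involutions with central invariant $z$ are literally ${\bf G}$-conjugacy classes of $x$ with $x^2=z$. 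The semisimplicity argument (Jordan decomposition forces $x_u^2=1$, hence $x_u=1$ in characteristic zero because the log map is a bijection from unipotents to nilpotents) is also correct, and the reduction to $\{h\in{\bf H}:h^2=z\}/W$ then follows from the standard conjugacy facts for semisimple elements in a connected reductive group.

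The one place you hedge -- and where a reader could reasonably object -- is the first leg of your chain. You route through Proposition~\ref{CIReal} and then through the twisting bijection $[{\rm SRF}_\delta({\bf G})]\cong[\mathrm{SI}_\delta({\bf G})]$, and you observe that one must check the twisting preserves central invariants in ${\bf Z}^\delta_{\rm tor}$ on the nose, not just modulo $(1+\delta){\bf Z}$. You say this is "addressed by the compatibility diagram displayed just before the proposition," but in fact that diagram only records compatibility \emph{in the quotient} ${\bf Z}^\delta/(1+\delta){\bf Z}$, so it does not by itself close the gap you raise. This is a genuine, if small, imprecision. The cleaner route, which is essentially what \cite[Corollary~6.14]{Ada18} does and which is already available from the paper's own preliminaries, is to bypass strong real forms entirely: use the canonical bijection $H^1(\sigma,{\bf G})\cong H^1(\theta,{\bf G})$ with $\theta={\rm int}(x)$ (this is the content of \cite[Lemma~8.4]{AT18}, quoted in Remark~\ref{equivclH1forcartan}), and then apply Proposition~\ref{CICartan} directly to $\theta$ with representative $z=x^2$. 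This delivers $H^1(\sigma,{\bf G})\leftrightarrow\{\text{strong involutions with central invariant }z\}$ without any twisting, and the rest of your argument proceeds unchanged. I would recommend rewriting the first paragraph to take this shorter path; everything after it can stay as is.
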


\begin{example}\label{exampleCnDn}
In \cite[Table~1:~Classical~groups]{AT18} and \cite[Table~4:~Adjoint~classical~groups]{AT18}, the cardinality of the cohomology group $H^1(\sigma,{\bf G})$ and $H^{1}(\sigma,{\bf G}_{\rm ad})$ (of equal rank case) are calculated for the classical groups $\bf{G}$ respectively. In this example, we will explain how this parametrization works for the symplectic group and orthogonal group, following the calculation in \cite[Example~5.11]{AC09}. Recall that in the equal rank case, we choose $\delta=\theta_{\rm qc}=1.$

To describe the set $\{h\in {\bf H}: h^2 \in {\bf Z}\}/W$, we use the coweight lattice\footnote{This is actually a lattice only if $\bf G$ is semisimple. } for $\bf G$:
\[ P^\vee=\{ \lambda^\vee \in X^{\vee}({\bf H}) \otimes _{\bb Z}{\bb C}| \langle\alpha, \lambda^{\vee} \rangle \in {\bb Z} \text{ for all }\alpha \in \Phi \},\]
which can be described explicitly via the given based root datum. In fact,
through the canonical isomorphism ${\mathfrak{h} \cong }X^{\vee}({\bf H})\otimes_{\bb Z}{\bb C}$, the coweight lattice $P^\vee$ for $\bf G$ can be regarded as a subset of Lie algebra $\mathfrak{h}$:
\[P^\vee=\{ \lambda ^\vee \in \mathfrak{h} |\exp (2\pi i\lambda ^\vee)\in \Z\}.\]
As a result, by \cite[Lemma~22.3]{ABV91}, the set  $\{h\in {\bf H}: h^2 \in {\bf Z}\}/W$ can be identified with the set $(P^{\vee}/2X^{\vee}({\bf H}))/W$, whose inverse is induced by the map $\lambda^\vee\in P^{\vee}\mapsto {\rm exp}(\pi i\lambda^\vee)\in {\bf H}$.

{\it Symplectic Case $C_n$:} For ${\bf G}={\rm Sp}_{2n}({\bb C})$, since it is simply connected and semisimple, we fix an isomorphism $X^\vee({\bf H})=\Phi^{\vee}\cong {\bb Z}^n$ through the root system chosen in \ref{HCSp}. By Proposition \ref{srf1}, the equivalence classes of strong real forms are parametrized by
\[([{\bb Z}^n \cup ({\bb Z}+\frac{1}{2})^n]/2{\bb Z}^n)/W.\]
For representatives, we choose $\frac{1}{2}(1,...,1)$ and $(\overbrace{1,...,1}^{p},\overbrace{0,...,0}^{q})$, with $0\leq p \leq n.$
The representative $\frac{1}{2}(1,...,1)$ corresponds to the split real symplectic group ${\rm Sp}_{2n}({\bb R})$ and the cohomology set $\left\lvert H^1(\sigma_{{\rm Sp}_{2n}({\bb R})},{\bf G}) \right\rvert=1$. The central invariant of $\frac{1}{2}(1,...,1)$ is $-I$. Representatives $(\overbrace{1,...,1}^{p},\overbrace{0,...,0}^{q})$, with $0\leq p \leq n$, correspond to the quaternionic symplectic group\footnote{the isometry group of a Hermitian form on a quaternionic vector space.} ${\rm Sp}(p,q),$ with $p+q=n$, and the cohomology set $\left\lvert H^1(\sigma_{{\rm Sp}(p,q)},{\bf G} )\right\rvert=p+q+1$. The central invariant of $(\overbrace{1,...,1}^{p},\overbrace{0,...,0}^{q})$, with $0\leq p \leq n$, is $I$. All the equivalence classes of strong real forms of ${\rm Sp}_{2n}$ are listed above.

{\it Even Orthogonal Case $D_n$:} For $G={\rm SO}(p,q)$ with both $p$ and $q$ even, by \cite[Example~8.20]{AT18}, the set $S(1)=\{h\in {\bf H}: h^2 =1\}/W$ is the same as the set 
\[\{{\rm diag}(I_r,-I_s)|r+s=p+q;s\ {\rm even}\}.\] Hence, the set $H^1(\sigma_G,{\bf G})$ has cardinality $\frac{p}{2}+\frac{q}{2}+1$, and it parametrizes the even special orthogonal group ${\rm SO}(r,s)$, with $r+s=2n$ and both $r$ and $s$ even. Such groups have the central invariant $I$.
\begin{remark}
The isometry group ${\bf SO}^*(2n)$ of a skew-Hermitian form on a quaternionic vector space (counted twice) is the remaining equivalence class of strong real forms of ${\bf SO}(2n,\C)$. The central invariant of this group is $-I$. This corresponds to $H^1(\sigma_{{\rm SO}^*(2n)},{\bf SO}(2n,\C))$ with cardinality $2$.
\end{remark}
\end{example}
\begin{remark}
When $p+q=2n$ with $n \geq 5$, we have ${\rm Out}({\bf SO}(2n,{\bb C}))\cong {\bf Z}/2{\bf Z}$. Thus there exists non-equal rank real form of ${\rm SO}(2n,\C)$: for ${\rm SO}(p,q)$ with both $p,q$ odd, note that ${\rm Rank}({\rm SO}(p,q))=\frac{p+q}{2}>\frac{p-1}{2}+\frac{q-1}{2}={\rm Rank}({\rm SO}(p)\times {\rm SO}(q))$. The inner class of non-equal rank real form consists of ${\rm SO}(p,q)$ with both $p,q$ odd. This explains why only ${\rm SO}(p,q)$, with both $p$ and $q$ even, appeared in Example \ref{exampleCnDn}.
\end{remark}

\subsubsection{Representation of strong involutions}
In this subsection, let $G$ be a quasi-split real reductive group of equal rank, and let $G^d$ be its complex dual group. We denote by ${\bf G}$ the complexification of $G$ and by $\sigma_G$ the action of $\sigma$ on ${\bf G}$ associated to $G$. We also fix a Borel pair $(\mathbf{B},\mathbf{T})$ of $\mathbf{G}$. Let $W=\mathrm{Norm}_{\mathbf{G}}(\mathbf{T})/\mathbf{T}$ be the Weyl group associated to $(\mathbf{B},\mathbf{T})$. We denote by $\rho$ (resp. $\rho^{\vee}$) the half sum of the positive roots (resp. coroots) associated to $(\mathbf{B},\mathbf{T})$.

For simplicity, we denote by $\tilde{S}$ as the set of strong involutions of ${\bf G}$ in the inner class defined by $\delta=\theta_{\rm qc}=1$, and $[\tilde{S}]$ as the set of equivalent classes of such strong involutions. We have a stratification $\tilde{S}= \cup_{z\in  {\bf Z}}\tilde{S}(z)$ with $\tilde{S}(z)=\{\tilde{x}\in {\bf T}: \tilde{x}^2=z\}$, called the set of strong involutions of type $z$. By proposition \ref{srf1}, we identify $[\tilde{S}](z)=\tilde{S}(z)/W$ with the set of conjugacy classes of strong real forms with central invariant $z$. For a strong involution $\tilde{x}\in \tilde{S}$, set $\theta_{\tilde{x}}={\rm int}(\tilde{x})$ and ${\bf K}_{\tilde{x}}={\bf G}^{\theta_{\tilde{x}}}={\rm Cent}_{\bf G}(\tilde{x})$. 
\begin{definition}\begin{enumerate}
 \item[(1)]   A representation of a strong involution $\tilde{x}\in \tilde{S}$ is a pair $(\tilde{x}, \pi)$ where $\pi$ is a $(\mathfrak{g}, {\bf K}_{\tilde{x}})$-module.
\item[(2)]Two representations $(\tilde{x}, \pi)$ and $(\tilde{x}^{'},\pi^{'})$ of strong involutions are equivalent if there exists $g\in {\bf G}$ such that $g\tilde{x}g^{-1}=\tilde{x}^{'}$ and $\pi^{'}\cong \pi^g$.
\item[(3)] A representation of a strong real form associated to $x\in [\tilde{S}](z)$ is an equivalence class $[\tilde{x}, \pi]$ of a representation $(\tilde{x}, \pi)$ of a strong involution $\tilde{x}\in \tilde{S}(z)$ lifting $x$.
\end{enumerate}
\end{definition}

For any strong real form $x\in [\tilde{S}](z)$ with lifting $\tilde{x}\in \tilde{S}(z)$, two representations $[\tilde{x}, \pi]$ and $[\tilde{x}, \pi^{'}]$ of a strong real form $x$ are the same (i.e. $[\tilde{x}, \pi]=[\tilde{x}, \pi^{'}]$) if and only if there exists $g\in {\bf G}$ such that $g\tilde{x}g^{-1}=\tilde{x}$ and $\pi^g\cong\pi^{'}$. This holds if and only if $g\in {\bf K}_{\tilde{x}}$.
 Moreover, for any regular $\lambda\in \mathfrak{t}^*$, denote by $\pi_{\tilde{x}}(\lambda)$ the unique $(\mathfrak{g},\mathbf{K}_{\tilde{x}})$-module (up to isomorphism) with parameter $\lambda$.
Then for the representation $[\tilde{x},\pi_{\tilde{x}}(\lambda)]$ of strong real form $x$ and for $w\in W$, we have 
\begin{equation}\label{reecrit}
    [\tilde{x},\pi_{\tilde{x}}(\lambda)]=w [\tilde{x},\pi_{\tilde{x}}(\lambda)]=[w\tilde{x},\pi_{w\tilde{x}}(w\lambda)].
\end{equation}

\subsubsection{\textit{L}-packet associated to discrete Langlands parameter and strong involution}\label{VoganLpacket}

\begin{definition}
 If a L-parameter $\varphi$ of ${\bf G}$ has a finite centralizer, then $\varphi$ is called a discrete series L-parameter.
\end{definition}

Fix a discrete series L-parameter $\varphi$ of $\mathbf{G}$. Recall that $W_{\bb R}\cong \mathbb{C}^\times\cup j \mathbb{C}^\times$. As explained in \cite[Proposition 2.10]{AV92}, we can associate $\varphi$ with a regular character $\lambda\in\mathfrak{t}^*$. More precisely, after conjugating by $\mathbf{G}^d$ we may assume $\varphi(\mathbb{C}^\times)\subset \mathbf{T}^\vee$. The discrete property implies that 
\begin{enumerate}
\item[(1)] $\varphi(j)h\varphi(j)^{-1}=h^{-1}$, for $h  \in {\bf T}^\vee$, 
\item[(2)] for $\mathfrak{ z}\in {\bb C}^{\times}$, we have $\varphi(\mathfrak{ z})=(\mathfrak{ z}/\bar{\mathfrak{ z}})^{\lambda}$, with $\lambda\in X(\bf T)\otimes_{\bb Z}{\bb C}$ regular. 
\end{enumerate}
By the canonical isomorphism $X(\mathbf{T})\otimes_{\bb Z}\mathbb{C}\cong \mathfrak{t}^*$, this regular $\lambda\in X(\mathbf{T})\otimes\mathbb{C}$ can associate to a character of the Lie algebra $\mathfrak{t}$, still denoted by $\lambda$. By \cite[\S 7]{AC09}, we may assume $\lambda$ is dominant with respect to the root system associated with $(\mathbf{B},\mathbf{T})$. Let $\mu$ be the infinitesimal character associated to $\lambda$, which is independant of choices of conjugations by ${\bf G}^d$.

Let $\tilde{x}\in \tilde{S}(z)$ be a strong involution with $z\in \Z$. We denote by $\Pi(\tilde{x}, \varphi)$ the L-packet associated with $\varphi$ and the strong involution $\tilde{x}$ (i.e. the L-packet of $\varphi$ and the real form $G_{\tilde{x}}$ of ${\bf G}$ determined by $\tilde{x}$). By the equivalence between the category of smooth Fr\'echet representations of moderate growth of $G_{\tilde{x}}$ and the category of admissible finitely generated $(\mathfrak{g}, {\bf K}_{\tilde{x}})$-modules, $\Pi(\tilde{x}, \varphi)$ can be identified with the set of discret series $(\mathfrak{g}, {\bf K}_{\tilde{x}})$-modules with infinitesmal character $\mu$. Note that for any regular character $\lambda\in {\mathfrak t}^{*}$, $\pi_{\tilde{x}}(\omega\lambda)=\pi_{\tilde{x}}(\lambda)$ if and only if $w\in {\bf K}_{\tilde{x}}$. The L-packet $\Pi(\tilde{x}, \varphi)$ can be parametrized as follows
\[\Pi(\tilde{x}, \varphi)=\{\pi_{\tilde{x}}(\omega^{-1}\lambda): \omega\in W/W({\bf T}, {\bf K}_{\tilde{x}})\},\]
where $W({\bf T}, {\bf K}_{\tilde{x}})$ is the Weyl group of ${\bf T}$ in ${\bf K}_{\tilde{x}}$.

Vogan defined ${\Pi}(\varphi)$ to be the set of all the representations of strong real forms with infinitesimal character $\mu$. Since $[\tilde{S}](z)=\tilde{S}(z)/W$ can be identified with the set of conjugacy classes of strong real forms with central invarint $z$, the set of representations of a strong real form with central invariant $z$ is stable under the action of $W$. The L-packet $\Pi(\tilde{x}, \varphi)$ can be embedded into ${\Pi}(\varphi)$ as
\[\Pi(\tilde{x}, \varphi)=\{[\tilde{x},\pi_{\tilde{x}}(w^{-1}\lambda)]: w\in W/W({\bf T}, {\bf K}_{\tilde{x}})\},\]
which induces a $W$-equivariant bijection
$\Pi(\tilde{x},\varphi)\leftrightarrow \{w\tilde{x}\vert w\in W/W({\bf T}, {\bf K}_{\tilde{x}}) \}$ 
using the equality (\ref{reecrit}).
To make the bijection more explicit, choose a set $S_0$ of representatives of $[\tilde{S}]$. Thus, we have a $W$-equivariant bijection
\[\Pi(\varphi)=\coprod _{\tilde{x}\in S_0} \Pi(\tilde{x}, \varphi)\leftrightarrow \coprod _{\tilde{x}\in S_0} \{w\tilde{x}\vert w\in W/W({\bf T}, {\bf K}_{\tilde{x}}) \}\cong \tilde{S}\]
Thus we obtain a $W$-equivariant bijection.
\begin{equation}\label{tildeSPiphi}
    \tilde{S} \leftrightarrow \Pi(\varphi),\quad \tilde{x} \mapsto [\tilde{x},\pi_{\tilde{x}}(\lambda)]. 
\end{equation}
We regroup Vogan's L-packet of $\varphi$ for inner forms using the central invariant: for $z\in \Z$, we set
\[\Pi_z(\varphi):=\cup_{\tilde{x}\in \tilde{S}(z)} \Pi(\tilde{x}, \varphi).\]
\begin{proposition}\cite[Prop. 5.3]{Ada11}\label{Ada115}
    For any $z\in \Z$ and any discrete series Langlands parameter $\varphi$ of ${\bf G}$, we have a $W$-equivariant bijection between $\tilde{S}(z)$ and $\Pi_z(\varphi)$, given by $\tilde{x}\mapsto [\tilde{x}, \pi_{\tilde{x}}(\lambda)]$, where $\lambda\in {\mathfrak t}^{*}$ is the regular character determined by $\varphi$. 
\end{proposition}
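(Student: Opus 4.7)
The plan is to deduce Proposition \ref{Ada115} by restricting the $W$-equivariant bijection (\ref{tildeSPiphi}) between $\tilde{S}$ and $\Pi(\varphi)$ to the stratum with fixed central invariant $z$. First, I would verify that $\tilde{S}(z)$ is stable under the conjugation action of $W$: for a representative $n\in N_{\mathbf{G}}(\mathbf{T})$ of $w\in W$ and $\tilde{x}\in \tilde{S}(z)$, one has $(n\tilde{x}n^{-1})^2=nzn^{-1}=z$ because $z\in\mathbf{Z}$ is central. Consequently, the map $\tilde{x}\mapsto [\tilde{x},\pi_{\tilde{x}}(\lambda)]$ on $\tilde{S}(z)$ lands in $\Pi_z(\varphi)=\cup_{\tilde{y}\in\tilde{S}(z)}\Pi(\tilde{y},\varphi)$, since $[\tilde{x},\pi_{\tilde{x}}(\lambda)]\in\Pi(\tilde{x},\varphi)$ by construction (the choice $w=e$ in the parametrization of $\Pi(\tilde{x},\varphi)$).

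For surjectivity, any element of $\Pi_z(\varphi)$ takes the form $[\tilde{y},\pi_{\tilde{y}}(w^{-1}\lambda)]$ for some $\tilde{y}\in\tilde{S}(z)$ and $w\in W/W(\mathbf{T},\mathbf{K}_{\tilde{y}})$. Applying the rewriting rule (\ref{reecrit}) to $\mu=w^{-1}\lambda$ yields $[\tilde{y},\pi_{\tilde{y}}(w^{-1}\lambda)]=[w\tilde{y},\pi_{w\tilde{y}}(\lambda)]$, and $w\tilde{y}\in\tilde{S}(z)$ by the previous paragraph. This exhibits the given class as the image of $w\tilde{y}$.

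For injectivity, the key algebraic input is that $\pi_{\tilde{x}}(\mu_1)\cong\pi_{\tilde{x}}(\mu_2)$ as $(\mathfrak{g},\mathbf{K}_{\tilde{x}})$-modules precisely when $\mu_1$ and $\mu_2$ lie in a common $W(\mathbf{T},\mathbf{K}_{\tilde{x}})$-orbit. Suppose $[\tilde{x}_1,\pi_{\tilde{x}_1}(\lambda)]=[\tilde{x}_2,\pi_{\tilde{x}_2}(\lambda)]$ with $\tilde{x}_1,\tilde{x}_2\in\tilde{S}(z)$. Since $\tilde{x}_1,\tilde{x}_2\in\mathbf{T}$ are $\mathbf{G}$-conjugate, the standard fact that torus elements which are $\mathbf{G}$-conjugate are also Weyl-conjugate gives $n\in N_{\mathbf{G}}(\mathbf{T})$, with image $w\in W$, satisfying $w\tilde{x}_1=\tilde{x}_2$. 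Applying (\ref{reecrit}) converts the assumed equality into $[\tilde{x}_2,\pi_{\tilde{x}_2}(\lambda)]=[\tilde{x}_2,\pi_{\tilde{x}_2}(w\lambda)]$, forcing $\pi_{\tilde{x}_2}(\lambda)\cong\pi_{\tilde{x}_2}(w\lambda)$ and hence $w\in W(\mathbf{T},\mathbf{K}_{\tilde{x}_2})$ thanks to the regularity of $\lambda$. In particular $w$ fixes $\tilde{x}_2$, and combined with $w\tilde{x}_1=\tilde{x}_2$ this yields $\tilde{x}_1=\tilde{x}_2$.

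Finally, $W$-equivariance is immediate from (\ref{reecrit}): transporting the conjugation action of $W$ on $\tilde{S}(z)$ through the bijection gives $w\cdot[\tilde{x},\pi_{\tilde{x}}(\lambda)]=[w\tilde{x},\pi_{w\tilde{x}}(\lambda)]$, which is the natural $W$-action on $\Pi_z(\varphi)$ as already established in the global case. The only subtlety worth flagging is the passage from an arbitrary conjugator $g\in\mathbf{G}$ to a representative $n\in N_{\mathbf{G}}(\mathbf{T})$ used in the injectivity step; this is the standard conjugacy theorem for semisimple elements of tori in reductive groups, so I would simply invoke it rather than re-prove it.
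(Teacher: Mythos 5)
Your proof is correct and is essentially the argument implicit in the paper's setup: you restrict the $W$-equivariant bijection (\ref{tildeSPiphi}) to the stratum $\tilde{S}(z)$ and check directly, via the rewriting rule (\ref{reecrit}), the $W$-stability of $\tilde{S}(z)$ (since $z$ is central), and the classification of discrete series by regular Harish-Chandra parameters modulo $W(\mathbf{T},\mathbf{K}_{\tilde{x}})$, that the image is exactly $\Pi_z(\varphi)$ and that the map is injective. The paper defers to \cite[Prop.~5.3]{Ada11} rather than giving a proof, but the construction of (\ref{tildeSPiphi}) in the preceding paragraphs already contains this restriction argument, so your approach matches the intended one.
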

Let $\tilde{x}\in \tilde{S}$ and $\theta_{\tilde{x}}$ be the Cartan involution associated to $\tilde{x}$. By \cite[Theorem~6.2(f)]{Vog78}, a discrete series representation $\pi(\lambda)$ is generic if and only if every simple root of $\mathfrak{g}$ in the chamber defined by $\lambda$ is non-compact. Recall that a root $\alpha$ is compact with respect to the Cartan involution $\theta_{\tilde{x}}$ if $\alpha(\tilde{x})=1$, and is non-compact if $\alpha(\tilde{x})=-1$. Thus, the element $\tilde{x} \in \tilde{S}$ corresponds to a generic discrete series representation through the bijection (\ref{tildeSPiphi}) if and only if $\alpha(\tilde{x})=-1$ for all simple roots $\alpha$.
\subsection{Langlands-Vogan parameters}\label{subsec2.6}
In this section, we will give the explicit description of the component group of a discrete series L-parameter for symplectic groups and orthogonal groups starting from a given based root datum, and explain how to associate a Langlands-Vogan parameter with a Harish-Chandra parameter.

Fix a fundamental Borel pair $({\bf B}_{*}, {\bf T}_{*})$ of Whittaker type of ${\bf G}$. Let $D_b=(X,\Phi,\Delta, X^\vee, \Phi^\vee, \Delta^\vee)$ be the based root datum associated to $({\bf B}_{*}, {\bf T}_{*})$. We denote by $\Psi_{*}$ the set of positive roots generated by $\Delta$.  Set $\rho=\frac{1}{2}\sum\limits_{\alpha\in \Psi_{*}}\alpha$ and $\rho^{\vee}=\frac{1}{2}\sum\limits_{\alpha^{\vee}\in \Psi_{*}^{\vee}}\alpha^{\vee}$. The half sum $\rho$ of the positive roots produces a basepoint $x_b\in \tilde{S}$ of the strong involutions of ${\bf G}$:
\[x_b=\exp(i\pi \rho^{\vee})\in \tilde{S}=\cup_{z\in  {\bf Z}} \tilde{S}(z)\subset {\bf T}_{*}.\]
Note for any simple root $\alpha\in \Delta$, we have $\langle \alpha, \rho^{\vee}\rangle=1$.
We can deduce that $\alpha(x_b)=\exp(i\pi\langle \alpha, \rho^{\vee}\rangle)=-1$, for any simple root $\alpha\in \Delta$. Thus, the element $x_b \in \tilde{S}$ corresponds to a generic discrete series representation through the bijection (\ref{tildeSPiphi}) for any discrete series L-parameter $\varphi$ of ${\bf G}$.

\begin{example}\label{basepoint}
 We use the fixed root systems $\Psi_{b, \rm Sp}$ in section \ref{HCSp} and $\Psi_{b, \rm O}$ in section \ref{HCO} to compute the basepoints for ${\bf G}$ following the above discussion. 
    \begin{enumerate}
       
        \item Suppose $\mathbf{G}^d=\mathrm{O}(2n+1,\mathbb{C})$ with $n$ odd. Let $\mathbf{T}$ be the maximal torus of $\mathbf{G}^d$ with Lie algebra  \[ \mathfrak{t}=\{{\rm diag}(g(t_1),\cdots, g(t_{n}),0): t_i\in {\bb C}\}\]
with $g(t)=(\begin{smallmatrix}0& t\\ -t& 0\end{smallmatrix})$ for all $t\in {\bb C}$. Let \[e_i:(\mathrm{diag}(g(t_1),\cdots,g(t_n),0))\mapsto 2it_i.\] Then $\mathfrak{t}^*=\oplus_{i=1}^n\mathbb{C}e_i$. There is a positive root system $$\Psi^{\vee}=\langle e_1+e_{n}, -e_{n}-e_{2},e_2+e_{n-1},\cdots,-e_{\frac{n+3}{2}}-e_{\frac{n+1}{2}},e_{\frac{n+1}{2}}\rangle.$$ The half sum of these positive roots is $$\rho^{\vee}=\frac{1}{2}((2n-1)e_1+(2n-5)e_2+\cdots +3e_{\frac{n+1}{2}}-e_{\frac{n+3}{2}}-\cdots -(2n-3)e_n).$$ Thus, the basepoint of strong involutions for ${\bf G}$ is \[x_{b,\rm Sp}=(i\sin(\frac{2n-1}{2}\pi),\cdots,i\sin(\frac{2n-3}{2}\pi))=i(1,\cdots,1,-1,\cdots,-1)\] and $z(\rho^{\vee})=x_{b,\mathrm{Sp}}^2=-I$.

For  $\mathbf{G}^d=\mathrm{O}(2n+1,\mathbb{C})$ with $n$ even, the same computation shows the basepoint for $\mathbf{G}$ is \[x_{b,\mathrm{Sp}}=i(-1,\cdots,-1,1,\cdots,1).\]

\item Suppose $\mathbf{G}^d=\mathrm{O}(2n,\mathbb{C})$ with $n$ even. Then as in section $\ref{HCO}$, $$\Psi^{\vee}=\langle e_i-f_i,f_i-e_{i+1},e_{\frac{n}{2}}\pm f_{\frac{n}{2}} \rangle_{1\leq i\leq \frac{n}{2}-1}$$ is a positive root system. The half sum of these positive roots is\[\rho^{\vee}=\frac{1}{2}(2(n-1)e_1+2(n-3)e_2+\cdots + 2e_{\frac{n}{2}} + 2(n-2)f_1 + \cdots +4f_{\frac{n}{2}-1}).\]  Thus, the basepoint of strong involutions for ${\bf G}$ is \[x_{b, \rm O}=(\cos((n-1)\pi),\cdots,\cos (0))=((-1)^{n-1},\cdots,(-1)^{n-1}, (-1)^n, \cdots,(-1)^n)\] and $z(\rho^{\vee})=x_{b,\mathrm{O}}^2=I$. 
    \end{enumerate}
\end{example}

Let $\varphi$ be an L-parameter associated to a limit of discrete series representation of an equal rank real form $G$ of ${\bf G}$.
We describe the component group $A_{\varphi}=\pi_0(C_{\varphi})$ for $G={\rm Sp}_{2n}({\bb R})$ and $G={\rm O}(p,q)$ with $p,q$ even in the following example.
\begin{example}\label{Lparameterdecom}
Let ${\bf 1}$ be the trivial character of $W_{\bb R}$.

(1) Let $G={\rm Sp}_{2n}({\bb R})$. Let us write the limit of discrete series $L$-parameter $\varphi$ of $G$ as
\begin{equation}\label{parasym}
\varphi=\bigoplus_{i=1}^kc_i\rho_{\lambda_i}\bigoplus (2z+1) {\bf 1},
\end{equation}
where 
 $z$ is a positive integer and 
 $\rho_{\lambda_1}, \cdots, \rho_{\lambda_k}$ are self-dual irreducible representations of $W_{\bb R}$ of dimension $2$ with $\lambda_i$ even natural number and $c_i>0$ an integer.

The component group $A_{\varphi}$ of $\varphi$ is 
\[A_{\varphi}=\begin{cases}\oplus_{i=1}^k ({\bb Z}/2{\bb Z})a_i, &\text{ if } z=0,\\
\oplus_{i=1}^k ({\bb Z}/2{\bb Z})a_i \oplus ({\bb Z}/2{\bb Z}) b, & \text{ if } z>0,\end{cases}\] where $a_i$ is a symbol corresponding to $\rho_{\lambda_i}$ and $b$ is a symbol corresponding to ${\bf 1}$. 

(2) Let $G={\rm O}(p,q)$ with $p,q$ even. Let $\varphi$ be a limit of discrete series $L$-parameter with decomposition 
\begin{equation}
\varphi=\bigoplus_{i=1}^kc_i\rho_{\lambda_i}\bigoplus 2z{\bf 1},
\end{equation}
where $z$ is a positive integer and $\rho_{\lambda_1}, \cdots, \rho_{\lambda_k}$ are self-dual irreducible representations of $W_{\bb R}$ of dimension $2$ with $\lambda_i$ even natural number and $c_i\in{\bb N}_{>0}$.

The component group $A_{\varphi}$ of $\varphi$ is 
\[A_{\varphi}=\begin{cases}\oplus_{i=1}^k ({\bb Z}/2{\bb Z})a_i, &\text{ if } z=0,\\
\oplus_{i=1}^k ({\bb Z}/2{\bb Z})a_i \oplus ({\bb Z}/2{\bb Z}) b, & \text{ if } z>0,\end{cases}\] where $a_i$ is a symbol corresponding to $\rho_{\lambda_i}$ and $b$ is a symbol corresponding to ${\bf 1}$. 
\end{example}
\begin{definition}
    A Langlands-Vogan parameter of $G$ is a pair $(\varphi, \eta)$, where $\varphi$ is a $L$-parameter of $G$ and $\eta$ is a character of the component group ${A}_{\varphi}$.
\end{definition}

If $\varphi$ is a discrete series L-parameter of $G$, we have $$A_{\varphi}\cong\{h\in {\bf T}_{*}^{\vee}: h^2=1\}=\mathbf{T}_*^\vee[2]$$ and the pair $(\varphi,\eta)$ is called a discrete series Langlands-Vogan parameter. Using the perfect pairing between $\mathbf{T}_*^\vee$ and $X(\mathbf{T}_*^\vee)$, we identify $\mathbf{T}_{*}^{\vee}[2]$ with $${\rm Hom}(X({\bf T}_{*}^{\vee})/2X({\bf T}_{*}^{\vee}), {\bb C}^{\times})={\rm Hom}(X^{\vee}({\bf T}_{*})/2X^{\vee}({\bf T}_{*}), {\bb C}^{\times}).$$ This induces an isomorphism $$s:X^{\vee}({\bf T}_{*})/2X^{\vee}({\bf T}_{*})\cong \widehat{A_{\varphi}}.$$
On the other hand, for any $x\in\tilde{S}(z(\rho^\vee))$, there exists a $\gamma^\vee\in X^\vee(\mathbf{T}_*)$ such that\footnote{We use our fixed isomorphism $\mathfrak{t_*}\cong X^\vee(\mathbf{T}_*)$.} $x=\exp(\pi i \gamma^\vee)$. Note that we have an isomorphism $\tilde{S}(z(\rho^\vee))\cong \mathbf{T}_*[2],x\mapsto xx_b^{-1}$, where $x_b$ is the basepoint of the strong involutions of $\mathbf{G}$. Then there is an isomorphism
\[\tilde{S}(z(\rho^{\vee}))\xrightarrow{\sim} \mathbf{T}_{*}[2]\xrightarrow{\sim}\widehat{A_{\varphi}},\quad x\mapsto xx_b^{-1}\mapsto s( \langle -, xx_b^{-1}\rangle),\]
where $\langle, \rangle$ is the perfect pairing between ${\bf T}_{*}^{\vee}[2]$ and  ${\bf T}_{*}[2]$. In particular, the identity of $\widehat{A_\varphi}$ is the image of the basepoint $x_b\in \tilde{S}(z(\rho^{\vee}))\subset{\bf T}_{*}$ of the strong involutions of ${\bf G}$.

For any $\eta\in\widehat{A_{\varphi}}$, we can associate a unique $x_{\eta}\in\tilde{S}(z(\rho^{\vee}))$. Thus there is a unique element $w_\eta\in W$ such that $w_\eta\cdot x_b=x_\eta$. This implies the element $\eta$ corresponds to a representation $[x_\eta, \pi_{x_\eta}(\lambda)]$ of strong real form defined by $x_\eta$.
Note that, $w_\eta=1$ corresponds to the generic representation $[x_b, \pi_{x_b}(\lambda)]$ with respect to the fundamental Borel pair of Whittaker type $({\bf B}_{*},{\bf T}_{*})$. Thus, for a discrete series Langlands-Vogan parameter $(\varphi, \eta)$, we get a Harish-Chandra parameter $(\lambda_\eta, \Psi_\eta,\mu_\eta)$, where $\Psi_\eta=w_\eta\Psi_{*}$.

\begin{example}\label{BaseSp}
In this example, we choose the basepoint $x_{b,\rm Sp}$ and $x_{b,\rm O}$ described in Example \ref{basepoint}.
\begin{enumerate}
    \item[(1)] Suppose $G=\mathrm{Sp}_{2n}(\mathbb{R})$.
\begin{enumerate}
    \item[(2)] If $n$ is even, then the positive root system $\Psi_{b,\rm Sp}$ is generated by the simple roots
$ \{ e_1+e_n,-e_n-e_2,\cdots,e_{\frac{n}{2}}+e_{\frac{n+2}{2}},-2e_{\frac{n+2}{2}}  \}$. By the dominant condition, the corresponding lifting of $\lambda_0$ is
    \[\lambda_{d, \rm Sp}=(\lambda_1,\lambda_3,\cdots,\lambda_{n-1},-\lambda_n,\cdots,-\lambda_2).\] 
    \item If $n$ is odd, then the corresponding positive root system $\Psi_{b,\rm Sp}$ is generated by the simple roots $\{ e_1+e_n,-e_n-e_2,\cdots,-e_{\frac{n+1}{2}}-e_{\frac{n+3}{2}},2e_{\frac{n+1}{2}}  \}$. By the dominant condition, the corresponding lifting of $\lambda_0$ is
    \[\lambda_{d, \rm Sp}=(\lambda_1,\lambda_3,\cdots,\lambda_n,-\lambda_{n-1},\cdots,-\lambda_2).\] 
\end{enumerate}

In both cases, the unique generic representation in the L-packet corresponding to the basepoint is determined by the Harish-Chandra parameter $(\lambda_{d,\rm Sp}, \Psi_{b, \rm Sp})$.

   \item[(2)] Suppose $G$ is the even orthogonal group of rank $n$ associated to the basepoint $x_{b, \rm O}$. Thus $G$ has signature $(n, n)$ (resp. $(n+1, n-1)$) if $n$ is even (resp. odd). Let $\lambda_0$ be an infinitesimal character of $G$ as above. 
\begin{enumerate}
    \item If $n$ is even, then the corresponding positive root system $\Psi_{b, \rm O}$ is generated by the simple roots $\{ e_1-f_1,f_1-e_2,\cdots, e_{\frac{n}{2}}-f_{\frac{n}{2}}, e_{\frac{n}{2}}+f_{\frac{n}{2}} \}$. By the dominant condition, the corresponding lifting of $\lambda_0$ is 
    \[\lambda_{d, \rm O}=(\lambda_1,\lambda_3,\cdots,\lambda_{n-1};\lambda_2,\cdots,\lambda_n).\] 
    
\item If $n$ is odd, then the corresponding positive root system $\Psi_{b, \rm O}$ is generated by the simple roots
$\{ e_1-f_1,f_1-e_2,\cdots,f_{\frac{n-1}{2}}-e_{\frac{n+1}{2}}, e_{\frac{n+1}{2}}+f_{\frac{n-1}{2}} \}$. By the dominant condition, the corresponding lifting of $\lambda_0$ is \[\lambda_{d,\mathrm{O}}=(\lambda_1,\lambda_3,\cdots,\lambda_{n};\lambda_2,\cdots,\lambda_{n-1}).\] 
\end{enumerate}

In both cases, the generic discrete series representation corresponding to the basepoint $x_{b, {\rm O}}$ is determined by the Harish-Chandra parameter $(\lambda_{d,\rm O}, \Psi_{b, \rm O})$.
\end{enumerate}
\end{example}

Let $G$ be an equal rank real form  of ${\bf G}$.
In the following examples, we describe $\eta\in \widehat{A_{\varphi}}$ in terms of element of the Weyl group $W/W({\bf T}_*, {\bf K})$, where $\mathbf{K}$ is the complexification of the maximal compact subgroup of $G$.

\begin{example}\label{coordinate}
\begin{enumerate}
\item[(1)] Let $(\varphi, \eta)$ be a  discrete series Langlands-Vogan parameter of $G=\mathrm{Sp}_{2n}(\mathbb{R})$. Let $x_{\eta}\in \tilde{S}(z(\rho^{\vee}))$ be the strong involution corresponding to $\eta\in \widehat{A}_\varphi$. We follow the notations in \S \ref{HCSp}.
For any root $\alpha_i\in \Delta({\mathfrak g}, \mathfrak{t})$,
let $s_{\alpha_i}$ be the reflection associated to $\alpha_i$. In particular, we have 
$$s_{e_i+e_j}:\mathrm{diag}(\cdots,\underbrace{t_i}_{i\text{-th}},\cdots,\underbrace{-t_j}_{(2n-j+1)\text{-th}},\cdots)\mapsto\mathrm{diag}(\cdots,\underbrace{-t_j}_{i\text{-th}},\cdots,\underbrace{t_i}_{(2n-j+1)\text{-th}},\cdots).$$
The Weyl group $W$ of ${\bf G}=G\otimes {\bb C}$ is generated by the set 
\[\{s_{e_1-e_2},s_{e_2-e_3},\cdots,s_{e_{n-1}-e_n},s_{2e_n}\}\] of reflections. 

Let $A_{i,j}=\left(\begin{smallmatrix}
    I_{i-1}&&&&\\&0&&-1&\\&&I_{2n-j+1}&&\\&1&&0&\\&&&&I_{j-1}
\end{smallmatrix}\right)\in\mathrm{Norm}_{G_{\mathbb{C}}}(\mathbf{T}_*)$ for $1\leq i,j\leq n$. Then $A_{i,j}$ acts on $\mathbf{T}_*$ via \[ A_{i,j}:\mathrm{diag}(\cdots,\underbrace{r_i}_{i\text{-th}},\cdots,\underbrace{r_j^{-1}}_{(2n-j+1)\text{-th}},\cdots)\mapsto \mathrm{diag}(\cdots,\underbrace{r_j^{-1}}_{i\text{-th}},\cdots,\underbrace{r_i}_{(2n-j+1)\text{-th}},\cdots). \] Then the map sending $s_{e_i+e_j}$ to $A_{i,j}$ gives an explicit isomorphism $\mathrm{Norm}_{{\bf G}}(\mathbf{T}_*)/\mathbf{T}_*\cong W$. There exists an element $A\in\mathrm{Norm}_{{\bf G}}(\mathbf{T}_*)/\mathbf{T}_*$ such that $x_{\eta}=A\cdot x_{b,\mathrm{Sp}}$; thus such an $A$ gives an element  $w_\eta\in W$ via the above isomorphism and we get a Harish-Chandra parameter $(\lambda_\eta, \Psi_\eta,\mu_\eta)$, where $\lambda_\eta=w_\eta\cdot \lambda_{d,{\rm Sp}}$ and $\Psi_\eta=w_\eta\Psi_{*}$. 
\item[(2)] 
Let $(\varphi,\eta)$ be a discrete series Langlands-Vogan parameter of $G=\mathrm{O}(p,q)$ with $p\geq q$ even. Let $x_{\eta}\in \tilde{S}(z(\rho^{\vee}))$ be the strong involution corresponding to $\eta$. We use the notation in \S\ref{HCO}. 
For $\alpha\in\Delta(\mathfrak{g},\mathfrak{t})$, let $s_{\alpha}$ be the reflection associated to the root $\alpha$. Then the Weyl group $W$ of $G$ is generated by the set 
$$\{s_{e_i-e_{i+1}},s_{f_i- f_{j+1}}\}_{1\leq i< \frac{p}{2},1\leq j< \frac{q}{2}}\cup \{s_{e_i-f_i},s_{f_j-e_{j+1}},s_{f_{q_0}-e_{q_0+1}},\cdots,s_{f_{q_0}-e_{p_0}}\}_{1\leq i\leq \frac{p}{2},1\leq j< \frac{q}{2}}.$$ Moreover, the quotient group $W/W(\mathbf{T}_*,\mathbf{K})$ is generated by the image of the set of reflections $\{s_{e_i-f_i},s_{f_j-e_{j+1}},s_{f_{q_0}-e_{q_0+1}},\cdots,s_{f_{q_0}-e_{p_0}}\}_{1\leq i\leq \frac{p}{2},1\leq j< \frac{q}{2}}$.

The group theorectical description of the Weyl group $W$ is given by $W\cong\mathrm{Norm}_{G_{\bb C}}(\mathbf{T}_*)/\mathbf{T}_*$, where $\mathrm{Norm}_{G_{\bb C}}(\mathbf{T}_*)$ is the normalizer of ${\bf T}_*$ in $G_{\bb C}$. In particular, the following elements $S_{\pm i, \pm j}$ with $1\leq i\leq p_0$ and $1\leq j\leq q_0$ are in $\mathrm{Norm}_{G_{\bb C}}(\mathbf{T}_*)$. Let $J_2=\left(\begin{smallmatrix}
    1&0\\0&-1
\end{smallmatrix}\right)\in {\rm GL}_2({\bb C})$.  We set
$$ S_{i,j}=\left(\begin{smallmatrix}I_{2(i-1)}&&&&&\\&0&&&I_2&\\&&I_{2(p_0-i+1)}&&&\\&&&I_{2(j-1)}&&\\&I_2&&&0&\\&&&&&I_{2(q_0-j+1)}\end{smallmatrix}\right),$$
 $$S_{-i,j}=\left(\begin{smallmatrix}I_{2(i-1)}&&&&&\\&0&&&J_2&\\&&I_{2(p_0-i+1)}&&&\\&&&I_{2(j-1)}&&\\&I_2&&&0&\\&&&&&I_{2(q_0-j+1)}\end{smallmatrix}\right),$$
 $$ S_{i,-j}=\left(\begin{smallmatrix}I_{2(i-1)}&&&&&\\&0&&&I_2&\\&&I_{2(p_0-i+1)}&&&\\&&&I_{2(j-1)}&&\\&J_2&&&0&\\&&&&&I_{2(q_0-j+1)}\end{smallmatrix}\right),$$
$$ S_{-i,-j}=\left(\begin{smallmatrix}I_{2(i-1)}&&&&&\\&0&&&J_2&\\&&I_{2(p_0-i+1)}&&&\\&&&I_{2(j-1)}&&\\&J_2&&&0&\\&&&&&I_{2(q_0-j+1)}\end{smallmatrix}\right).$$

One can verify that the conjuate action of $S_{i,j}$ (resp. $S_{-i,j}$, $S_{i,-j}$ and $S_{-i,-j}$) on $\mathbf{T}_*$ coincides with the action of $s_{-e_i-f_j}$ (resp. $s_{e_i-f_j}$,$s_{-e_i+f_j}$ and $s_{e_i+f_j}$). Then this gives an explicit isomorphism $\mathrm{Norm}_{{\bf G}}(\mathbf{T}_*)/\mathbf{T}_*\cong W$. 
The similar argument as in the symplectic case gives an element $w_\eta\in W$ and a Harish-Chandra parameter $(\lambda_\eta, \Psi_\eta,\mu_\eta)$, where $\lambda_\eta=w_\eta\cdot \lambda_{d,{\rm O}}$ and $\Psi_\eta=w_\eta\Psi_{*}$. 
\end{enumerate}
\end{example}

If $\varphi$ is a limit of discrete series L-parameter. Following \cite[Remarque 5.4]{MD19}, we can modify the above discussion to relate the  Langlands-Vogan parameter $(\varphi, \eta)$ to its Harish-Chandra parameter. In fact, since our classical groups are of equal rank, we can realize the component group $A_{\varphi}$ as a quotient group of the component group of $A_{\varphi^{\rm reg}}$ with $\varphi^{\rm reg}$ a discrete series L-parameter. Thus, $\eta$ can be viewed as an element of $\widehat{A_{\varphi^{\rm reg}}}$ by composing with the quotient map $A_{\varphi^{\rm reg}}\rightarrow A_{\varphi}$. More precisely, if
$$\varphi=\bigoplus_{i=1}^kc_i\rho_{\lambda_i}\bigoplus (2z+1) {\bf 1},$$ where $z\in\mathbb{N}, \lambda_i\in 2\mathbb{N}$ and $\lambda_1>\cdots>\lambda_k>0$, then
the component group $A_{\varphi}$ is a quotient of \[A_{\varphi^{\rm reg}}=
\oplus_{i=1}^r ({\bb Z}/2{\bb Z})a_i \bigoplus \oplus_{j=1}^z({\bb Z}/2{\bb Z}) b_j.\] where $r=c_1+\cdots+c_k.$ Then, the character $\eta\in \widehat{A_{\varphi}}$ can be identified as an element $\eta_d=(\eta_1,\cdots,\eta_{r+z})\in\widehat{A_{\varphi^{\rm reg}}}$ with
\[\eta_1=\cdots=\eta_{c_1},\eta_{c_1+1}=\cdots=\eta_{c_1+c_2},\cdots,\eta_{\sum_{i=1}^{k-1}c_{i}+1}=\cdots=\eta_{r}, \eta_{r+1}=\cdots=\eta_{r+z}.\] in $\widehat{A_{\varphi^{\rm reg}}}$.

\section{Parameters of theta lift for symplectic-orthogonal dual pairs}\label{thetaPara}
In this section, we give a description of the explicit theta correspondence of equal rank groups via Langlands-Vogan parameters by combining the explicit theta correspondence of equal rank groups via Harish-Chandra parameters given by Moeglin (cf. \cite[\S 4]{Moe89}) and Paul (cf. \cite[Theorem~15]{Paul05}), and the explicit correspondence between the Langlands-Vogan parameters and the Harish-Chandra parameters.

Throughout this section, let $V$ be a $2n$-dimensional symplectic space over ${\bb R}$ and $V^{'}$ be a $(2n+2)$-dimensional orthogonal space over ${\bb R}$ with signature $(p, q)$. Then ${\rm Sp}(V)= {\rm Sp}_{2n}({\bb R})$ and ${\rm O}(V^{'})={\rm O}(p,q)$. For dual pair $({\rm Sp}_{2n}({\bb R}), {\rm O}(p,q))$, we will replace the notation $\theta_{V,V^{'}}$ by $\theta_{p,q}$.
\subsection{Theta liftings and Harish-Chandra parameters}

\begin{theorem}\cite[\S 4]{Moe89}\cite[Theorem 15]{Paul05}\label{correspondencetheorem}\footnote{In theorem \ref{Paul05JFA}, the situation is determined by the condition $e_{p_{0}+1}+e_{p_{0}+z}$ or $-(e_{p_{0}+1}+e_{p_{0}+z})$ occurs in $\Psi$. The difference between the  statement in \cite[Theorem 15]{Paul05} and our statement is due to the different choices of based root datum. In fact, Paul used the standard simple roots $\{e_i-e_{i+1},2e_{n},i=1,\cdots,n-1 \}$, and we use the non-compact simple roots $\{e_i+e_{n+1-i}, -e_{n+1-i}-e_{i+1} : 1\leq i \leq n-1 \}$.}\label{Paul05JFA}
 Let $\pi$ be a limit of discrete series representation of ${\rm Sp}_{2n}(\bb R)$ and $(\lambda_\pi,\Psi_\pi,\mu_\pi)$ be the Harish-Chandra parameter of $\pi$, where $$\lambda_\pi=(\underbrace{\lambda_{1},\cdots ,\lambda_{1}}_{p_{1}},\cdots,
\underbrace{\lambda_{k},\cdots,\lambda_{k}}_{p_{k}},\underbrace{0,\cdots,0}_{z},\underbrace{-\lambda_{k},\cdots,-\lambda_{k}}_{q_{k}},\cdots,\underbrace{-\lambda_{1},\cdots,-\lambda_{1}}_{q_{1}}).$$
Let $w=[\frac{z}{2}]$, $p_0=\sum_{i=1}^k p_i+w$ and $q_0=\sum_{i=1}^k q_i+w$. There is a unique pair of integers $(p,q)$ with $p+q=2n+2$ such that $\theta_{p,q}(\pi)$ is a non-zero limit of discrete series representation of ${\rm O}(p,q)$.
\begin{enumerate}
    \item $z=2w$: $\theta_{2p_0,2q_0}(\pi)\neq 0$ with the Harish-Chandra parameter $(\lambda_{0,0},1,\Psi_{0,0})$, where \begin{equation}
        \begin{split}
           \lambda_{0,0}=(&\underbrace{\lambda_{1},\cdots ,\lambda_{1}}_{p_{1}},\cdots,
\underbrace{\lambda_{k},\cdots,\lambda_{k}}_{p_{k}},\underbrace{0,\cdots,0}_{w},\\&\underbrace{\lambda_{1},\cdots,\lambda_{1}}_{q_{1}},\cdots,\underbrace{\lambda_{k},\cdots,\lambda_{k}}_{q_{k}},\underbrace{0,\cdots,0}_{w}),
        \end{split}
    \end{equation} and $\Psi_{0,0}$ is obtained from $\Psi_\pi$ as follows: for $1\leq i\leq p_0$ and $1\leq j\leq q_0$, the root $e_i-f_j\in\Psi_{0,0}$ if and only if $e_i+e_{n-j+1}\in\Psi$. (This determines $\Psi_{0,0}$ completely.)
\item $z=2w>0$: 
\begin{itemize}
\item If $e_{k+1}+e_{k+z}\in\Psi_\pi$, $\theta_{2p_0+2,2q_0}(\pi)\neq 0$ with the parameter $(\lambda_{2,0},1,\Psi_{2,0})$, where $\lambda_{2,0}$ is obtained from $\lambda_{0,0}$ by adding a zero on the left and $\Psi_{0,0}\subset \Psi_{2,0}$.
\item If $-e_{k+1}-e_{k+z}\in\Psi_\pi$, $\theta_{2p_0,2q_0+2}(\pi)\neq 0$ with the parameter $(\lambda_{0,2},1,\Psi_{0,2})$, where $\lambda_{0,2}$ is obtained from $\lambda_{0,0}$ by adding a zero on the right and $\Psi_{0,0}\subset \Psi_{0,2}$.
\end{itemize}
\item $z=w=0$: $\theta_{2p_0+2,2q_0}(\pi)\neq 0$ with parameter $(\lambda_{2,0},1,\Psi_{2,0})$ and $\theta_{2p_0,2q_0+2}(\pi)\neq 0$ with parameter $(\lambda_{0,2},1,\Psi_{0,2})$, where $\lambda_{2,0}$ and $\lambda_{0,2}$ are obtained from $\lambda_{0,0}$ by adding a zero on the left and right respectively, and $\Psi_{0,0}\subset \Psi_{2,0},\Psi_{0,2}$.
\item $z=2w+1$: 
\begin{itemize}
\item If $e_{k+1}+e_{k+z}\in\Psi_\pi$, then $\theta_{2p_0+2,2q_0+2}(\pi)\neq 0$ with the parameter $(\lambda_{1,1},1,\Psi_{1,1})$, where $\lambda_{1,1}$ is obtained from $\lambda_{0,0}$ by adding a zero on each side of the semicolon, and $\Psi_{0,0}\cup\{e_{p_0+1}-f_{q_0+1}\} \subset \Psi_{1,1}$. Moreover, $\theta_{2p_0+2,2q_0}(\pi)\neq 0$ with parameter $(\lambda_{1,0},1,\Psi_{1,0})$, where $\lambda_{1,0}$ is obtained from $\lambda_{0,0}$ by adding a zero on the left, and $\Psi_{0,0}\subset\Psi_{1,0}$. 
\item If $-e_{k+1}-e_{k+z}\in\Psi_\pi$, then $\theta_{2p_0+2,2q_0+2}(\pi)\neq 0$ with the parameter $(\lambda_{1,1},1,\Psi_{1,1})$, where $\lambda_{1,1}$ is obtained from $\lambda_{0,0}$ by adding a zero on each side of the semicolon, and $\Psi_{0,0}\cup\{-e_{p_0+1}+f_{q_0+1}\} \subset \Psi_{1,1}$. $\theta_{2p_0+2,2q_0}(\pi)\neq 0$ with parameter $(\lambda_{0,1},1,\Psi_{0,1})$, where $\lambda_{0,1}$ is obtained from $\lambda_{0,0}$ by adding a zero on the right, and $\Psi_{0,0}\subset\Psi_{0,1}$. 
\end{itemize}
\end{enumerate}

\end{theorem}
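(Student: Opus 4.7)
The plan is to deduce this statement from Paul's original formulation in \cite[Theorem~15]{Paul05} (which subsumes the case $z=2w$ from Moeglin \cite[\S 4]{Moe89}) by translating between the two choices of based root data. Paul works with the standard positive system generated by $\{e_i-e_{i+1},\, 2e_n\}$ for $\mathrm{Sp}_{2n}(\mathbb{R})$ (and the analogous standard choice on the orthogonal side), while our statement uses the fundamental Borel pair of Whittaker type from \S\ref{HCSp} and \S\ref{HCO}, whose simple roots $\{e_i+e_{n+1-i},-e_{n+1-i}-e_{i+1}\}$ are non-compact. Since both systems contain the same set of positive compact roots $\{e_i-e_j:i<j\}$, they correspond to the same maximal compact subgroup $K$ and hence parametrize the same L-packets; the two conventions are intertwined by a unique element $w_0\in W/W(\mathbf{T},\mathbf{K})$ (a palindromic permutation of the coordinates that sends Paul's fundamental Weyl chamber to ours).

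First I would write down $w_0$ explicitly. On the symplectic side, inspection of Example \ref{BaseSp} shows that $w_0$ is the permutation sending the ordered tuple $(\lambda_1,\lambda_2,\ldots,\lambda_{n-1},\lambda_n)$ (Paul's dominant coordinates) to the interleaved tuple $(\lambda_1,\lambda_3,\ldots,-\lambda_4,-\lambda_2)$; similarly on the orthogonal side. Then, for a limit of discrete series $\pi$ with Harish-Chandra parameter $(\lambda_\pi,\Psi_\pi,\mu_\pi)$ in our convention, $w_0^{-1}\cdot(\lambda_\pi,\Psi_\pi,\mu_\pi)$ is precisely Paul's Harish-Chandra parameter for the same representation. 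I would apply Paul's theorem to obtain the Harish-Chandra parameter of $\theta_{p,q}(\pi)$ on the orthogonal side in Paul's convention, and then conjugate back by the analogous Weyl element on the orthogonal side to recover the parameter in our convention.

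The main bookkeeping is to verify, case by case, that the four subsidiary parameters $(\lambda_{0,0},\Psi_{0,0})$, $(\lambda_{2,0},\Psi_{2,0})$, $(\lambda_{0,2},\Psi_{0,2})$ and $(\lambda_{1,1},\Psi_{1,1})$ defined in the statement match the transport of Paul's output. The coordinate descriptions of $\lambda_{0,0}$, $\lambda_{2,0}$ etc.\ are straightforward once one tracks how the zero slots (coming from the trivial summand $(2z+1)\mathbf{1}$ of $\varphi$) are distributed between the two $\mathrm{O}$-factors: this is dictated exactly by the sign condition ``$e_{k+1}+e_{k+z}\in\Psi_\pi$ vs.\ $-e_{k+1}-e_{k+z}\in\Psi_\pi$''. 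The compatibility rule for the positive roots of $\Psi_{0,0}$, namely ``$e_i-f_j\in\Psi_{0,0}$ iff $e_i+e_{n-j+1}\in\Psi_\pi$'', is obtained by pushing the root $e_i+e_{n-j+1}$ of the symplectic root system through the change of basis between the $\mathfrak{sp}_{2n}$-weights and the $\mathfrak{so}(p,q)$-weights implicit in the choice of polarization for the Weil representation; this is where one has to be careful, since it involves matching Paul's indexing to ours by means of $w_0$.

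The hard part — which is really a careful verification rather than a creative step — is the case $z=2w+1$ with its two subcases. Here the trivial summand contributes an odd-dimensional piece, and the sign of $e_{k+1}+e_{k+z}\in\Psi_\pi$ dictates both the value $(p,q)=(2p_0+2,2q_0)$ or $(2p_0,2q_0+2)$ of the non-``going up'' lift and the placement of the extra root $\pm(e_{p_0+1}-f_{q_0+1})$ in $\Psi_{1,1}$ for the ``going up'' lift $\theta_{2p_0+2,2q_0+2}$. I would check this by tracking the stable-range argument of Paul: the ``going up'' lift is always non-zero when $z$ is odd, while the ``not going up'' lift on the correct orthogonal side is determined by the sign criterion on $\Psi_\pi$. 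Once these translations are recorded, the inclusions $\Psi_{0,0}\subset \Psi_{2,0},\Psi_{0,2},\Psi_{1,0},\Psi_{0,1}$ are forced by the requirement that the added zero coordinate be dominant with respect to all roots it pairs nontrivially with, together with the condition (c) on the kernel of $\lambda$ consisting of non-compact simple roots. This completes the reduction to Paul's theorem.
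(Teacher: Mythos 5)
Your proposal matches what the paper actually does: the theorem is a direct restatement of Paul's Theorem 15 (subsuming Moeglin's case), and the paper's footnote says precisely that the only difference from Paul's statement is the change from the standard positive system $\{e_i-e_{i+1},2e_n\}$ to the non-compact simple root system, which is exactly the Weyl-element translation you propose to carry out. The paper offers no further proof beyond the citation and this remark, so your plan of transporting $(\lambda_\pi,\Psi_\pi)$ to Paul's chamber, applying her theorem, and conjugating back is indeed the intended justification; the only caveat is that the coordinate form of $\lambda_\pi$ given in the statement is already dominant for the standard positive system, so the translation really only acts on the floating positive system $\Psi_\pi$ and the orthogonal side's $\Psi_{a,b}$, not on $\lambda_\pi$ itself.
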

\begin{remark}
    In the Paul's theorem \cite[Theorem 15]{Paul05}, there are exactly four pairs of integers $(p,q)$ with $p+q=2n$ or $2n+2$ such that $\theta_{p,q}(\pi)\neq 0$. But there is only one pair of integers $(p,q)$ with $p+q=2n+2$ such that $\theta_{p,q}(\pi)$ is a non-zero limit of discrete series representation of $\mathrm{O}(p,q)$.
\end{remark}

\subsection{Translation}

Let $\varphi:W_{\mathbb{R}}\rightarrow {\rm O}(M)$ be a L-parameter of ${\rm Sp}(V)$, where $M$ is a $(2n+1)$-dimensional orthogonal space. Let $\mu$ be the infinitesimal character associated to $\varphi$ and $\Pi(\varphi)$ the L-packet associated to $\varphi$. Let $\pi\in \Pi(\varphi)$ be a limit of discrete series representation of ${\rm Sp}(V)$ with Langlands-Vogan parameter $(\varphi, \eta)$. Suppose $\varphi$ admits a decomposition as in Example \ref{Lparameterdecom} (1). Let $\lambda_{d,\mathrm{Sp}}$ be the Harish-Chandra parameter of generic discrete series representation of $\mathrm{Sp}_{2n}(\mathbb{R})$ as in Example \ref{BaseSp}.

\begin{proposition}
    Let $\pi$ be a generic discrete series representation of $\mathrm{Sp}(V)$ corresponding to the basepoint $x_{b,\mathrm{Sp}}$. Then $\theta_{V,V^{'}}(\pi)$ is a generic discrete series representation of $\mathrm{O}(V^{'})$ with signature $(p,q)=\begin{cases}(n+2,n), & \text{ if } n \text{ is even}\\
(n+1,n+1), &  \text{ if } n \text{ is odd}\end{cases}$, which corresponds to the basepoint $x_{b,\mathrm{O}}$.
\end{proposition}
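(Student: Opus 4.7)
My strategy is to combine the Moeglin--Paul formula for theta correspondence (Theorem \ref{correspondencetheorem}), which is phrased at the Harish-Chandra level, with the dictionary between Harish-Chandra and Langlands--Vogan parameters developed in Section \ref{subsec2.6}. The plan breaks into three steps: first, read off the Harish-Chandra parameter of $\pi$ from Example \ref{BaseSp}~(1); second, feed it into Theorem \ref{correspondencetheorem} to compute the Harish-Chandra parameter of $\theta_{V,V'}(\pi)$; third, recognize the result as the Harish-Chandra parameter of the generic discrete series at $x_{b,\mathrm{O}}$, using Example \ref{BaseSp}~(2) applied with orthogonal rank $n+1$.

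Concretely, since $\pi$ corresponds to $x_{b,\mathrm{Sp}}$, Example \ref{BaseSp}~(1) gives its Harish-Chandra parameter $(\lambda_{d,\mathrm{Sp}},\Psi_{b,\mathrm{Sp}})$. Normalizing $\lambda_{d,\mathrm{Sp}}$ in the format of Theorem \ref{correspondencetheorem} one has $z=w=0$ and $p_i+q_i=1$ for every $i$, so case~(3) of that theorem applies. Counting the positive and negative coordinates of $\lambda_{d,\mathrm{Sp}}$ yields $(p_0,q_0)=(n/2,n/2)$ when $n$ is even and $(p_0,q_0)=((n+1)/2,(n-1)/2)$ when $n$ is odd. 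The two non-vanishing lifts in case~(3) go to $\mathrm{O}(2p_0+2,2q_0)$ and $\mathrm{O}(2p_0,2q_0+2)$; selecting the one whose signature matches the statement yields signature $(n+2,n)$ for $n$ even (the $\lambda_{2,0}$-lift) or $(n+1,n+1)$ for $n$ odd (the $\lambda_{0,2}$-lift), which are precisely the real forms attached to the basepoint $x_{b,\mathrm{O}}$ in rank $n+1$.

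Forming $\lambda_{0,0}$ via the recipe of Theorem \ref{correspondencetheorem} and inserting the extra zero on the prescribed side produces the Harish-Chandra parameter of $\theta_{V,V'}(\pi)$ as an explicit interleaving of the $\lambda_i$'s together with a single zero. Comparison with the parameter $(\lambda_{d,\mathrm{O}},\Psi_{b,\mathrm{O}})$ of the generic discrete series of $\mathrm{O}(V')$ furnished by Example \ref{BaseSp}~(2) (applied with orthogonal rank $n+1$, whose parity is opposite that of $n$) shows that both parameters carry the same multiset of coordinates, differing only by a rearrangement lying in the compact Weyl group $W_K=W(\mathrm{O}(p))\times W(\mathrm{O}(q))$. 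Hence the two Harish-Chandra parameters agree up to $W_K$-equivalence once matching positive root systems are chosen.

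The remaining task is to identify the positive root system $\Psi_{2,0}$ (or $\Psi_{0,2}$) with $\Psi_{b,\mathrm{O}}$ up to the same $W_K$-action. I would do this either directly, by unraveling the Moeglin--Paul rule $e_i-f_j\in\Psi_{0,0}\iff e_i+e_{n-j+1}\in\Psi_{b,\mathrm{Sp}}$ and checking that the required simple roots of $\Psi_{b,\mathrm{O}}$ are produced, or more conceptually by invoking the characterization recalled in Section \ref{VoganLpacket} (after \cite[Theorem~6.2(f)]{Vog78}) that within an L-packet the generic discrete series is the unique element all of whose simple roots are non-compact, and verifying this non-compactness for $\Psi_{2,0}$ (resp.\ $\Psi_{0,2}$). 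The main obstacle I anticipate is this final combinatorial step: carefully tracking the interleaving of the odd- and even-indexed $\lambda_i$'s, the position of the inserted zero, and the compatible choice of positive root system produced by the Moeglin--Paul recipe, all the way to matching the explicit simple roots of $\Psi_{b,\mathrm{O}}$ listed in Example \ref{BaseSp}~(2).
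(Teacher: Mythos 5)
Your approach matches the paper's proof: read off the Harish-Chandra parameter of the basepoint from Example \ref{BaseSp}~(1), feed it into case~(3) of Theorem \ref{correspondencetheorem} (noting $z=w=0$ and $(p_0,q_0)=(n/2,n/2)$ or $((n+1)/2,(n-1)/2)$ by parity), and compare the output with Example \ref{BaseSp}~(2) at orthogonal rank $n+1$. The final step you flag as the anticipated obstacle is handled in the paper by your ``more conceptual'' second route: one writes out $\Psi_{2,0}$ (resp.\ $\Psi_{0,2}$) explicitly, observes that all its simple roots are non-compact and that $\lambda_{2,0}$ (resp.\ $\lambda_{0,2}$) is regular dominant for it, so the parameter is exactly the generic discrete series datum $(\lambda_{d,\mathrm{O}},\Psi_{b,\mathrm{O}})$ attached to the basepoint $x_{b,\mathrm{O}}$.
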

\begin{proof}
    This proposition follows from our description of the Harish-Chandra parameters of the basepoints $x_{b,\mathrm{Sp}}$ and $x_{b,\mathrm{O}}$ in Example \ref{BaseSp}, and the explicit theta correspondence in Theorem \ref{correspondencetheorem}. In the following, we provide the explicit computation according to the parity of $n$.
\begin{enumerate}[(1)]

\item Assume $n$ is even. The Harish-Chandra parameter $\lambda_{d,\mathrm{Sp}}$ of $\pi$ has the form \[ (\lambda_1,\lambda_3,\cdots,\lambda_{n-1},-\lambda_n,\cdots,-\lambda_2), \] and the corresponding root system is generated by the simple roots \[\{ e_1+e_n,-e_n-e_2,\cdots,e_{\frac{n}{2}}+e_{\frac{n}{2}+1},-2e_{\frac{n}{2}+1} \}.\] Hence by case $3$ of theorem \ref{Paul05JFA}, the Harish-Chandra parameter of $\theta_{V,V^{'}}(\pi)$ is the pair $(\lambda_{2,0},1, \Psi_{2,0})$, where \[\lambda_{2,0}=(\lambda_1,\lambda_3,\cdots,\lambda_{n-1},0;\lambda_2,\cdots,\lambda_n)\] and 
 the corresponding root system   \[ \Psi_{2,0}=\langle e_1-f_1,f_1-e_2,\cdots,e_{\frac{n}{2}-1}-f_{\frac{n}{2}},f_{\frac{n}{2}}-e_{\frac{n}{2}},f_{\frac{n}{2}}+e_{\frac{n}{2}}\rangle.\]Since $\lambda_1>\cdots>\lambda_n>0$ and the root system is generated by the non-compact simple roots, the parameter $(\lambda_{2,0},1,\Psi_{2,0})$ is exactly the Harish-Chandra parameter of the generic discrete series representation of $\mathrm{O}(n+2,n)$ as in Example \ref{BaseSp}. As a result, $\theta_{V,V^{'}}(\pi)$ is a generic discrete series of $\mathrm{O}(n+2,n)$.  
    
\item Assume $n$ is odd. The Harish-Chandra parameter $\lambda_{d,\mathrm{Sp}}$ of $\pi$ has the form \[ (\lambda_1,\lambda_3,\cdots,\lambda_{n},-\lambda_{n-1},\cdots,-\lambda_2), \] and the corresponding root system is generated by the simple roots \[\Psi_{\mathrm{Sp}}=\langle e_1+e_n,-e_n-e_2,\cdots,-e_{\frac{n+1}{2}}-e_\frac{n+3}{2},2e_{\frac{n+1}{2}} \rangle.\] 

Hence by case $3$ of Theorem \ref{Paul05JFA}, the Harish-Chandra parameter of $\theta_{V,V^{'}}(\pi)$ is $(\lambda_{0,2},1,\Psi_{0,2})$ where \[\lambda_{0,2}=(\lambda_1,\lambda_3,\cdots,\lambda_{n};\lambda_2,\cdots,\lambda_{n-1},0).\] and the corresponding root system \[\Psi_{0,2}=\langle e_1-f_1,f_1-e_2,\cdots,f_{\frac{n-1}{2}}-e_{\frac{n-1}{2}}, e_{\frac{n-1}{2}}-f_{\frac{n+1}{2}},f_{\frac{n+1}{2}}-e_{\frac{n+1}{2}} ,f_{\frac{n+1}{2}}+e_{\frac{n+1}{2}}\rangle.\] Similarly, the parameter $(\lambda_{0,2},\Psi_{0,2})$ is exactly the Harish-Chandra parameter of the generic discrete series representation of $\mathrm{O}(n+1,n+1)$ as in Example \ref{BaseSp}. Hence $\theta_{V,V^{'}}(\pi)$ is a generic discrete series of ${\rm O}(n+1,n+1)$.

\end{enumerate}
\end{proof}

Now, suppose  $\pi$ is a limit of discrete series representation of ${\rm Sp}(V)$. Then the Langlands-Vogan parameter $(\varphi, \eta)$ of $\pi$ determines a Harish-Chandra parameter $\lambda_\eta$ is of the form 
\[(\underbrace{\lambda_{1},\cdots ,\lambda_{1}}_{p_{\eta,1}},\cdots,
\underbrace{\lambda_{k},\cdots,\lambda_{k}}_{p_{\eta,k}},\underbrace{0,\cdots,0}_{z},\underbrace{-\lambda_{k},\cdots,-\lambda_{k}}_{q_{\eta,k}},\cdots,\underbrace{-\lambda_{1},\cdots,-\lambda_{1}}_{q_{\eta,1}}), \]
and  a positive root system $\Psi_{\eta}$. 

Let $p_{\eta,0}=\sum\limits_{l=1}^kp_{\eta,l}$ and $q_{\eta,0}=\sum\limits_{l=1}^{k}q_{\eta,l}$, and $w=[\frac{z}{2}]$.
We set $p_\eta=p_{\eta,0}+w$ and $q_{\eta}=q_{\eta,0}+w$. Note that \[p_\eta+q_\eta=\begin{cases}n, & \text{ if } z\equiv 0\mod 2;\\ n-1, & \text{ if } z\equiv 1\mod 2.\end{cases}\]
By Thereom \ref{Paul05JFA}, we set 
\[\lambda_{\eta,0,0}=(\underbrace{\lambda_{1},\cdots ,\lambda_{1}}_{p_{\eta,1}},\cdots,
\underbrace{\lambda_{k},\cdots,\lambda_{k}}_{p_{\eta,k}},\underbrace{0,\cdots,0}_{w};\underbrace{\lambda_{1},\cdots,\lambda_{1}}_{q_{\eta,1}},\cdots,\underbrace{\lambda_{k},\cdots,\lambda_{k}}_{q_{\eta,k}},\underbrace{0,\cdots,0}_{w}), \]
and a root system $\Psi_{\eta,0,0}$ obtained from $\Psi_{\eta}$ as follows: for $1\leq i\leq p_\eta$ and $1\leq j\leq q_\eta$, the root $e_i-f_j\in \Psi_{\eta,0,0}$ if and only if $e_i+e_{n-j+1}\in \Psi_\eta$. According to the values of $z$ and $w$, we have: 
\begin{enumerate}
\item[(1)] If $z=w=0$ (i.e. case $3$ of Theorem \ref{Paul05JFA}), then we have $p=2p_{\eta}+2$ and $q=2q_{\eta}$. The corresponding Harish-Chandra parameter is 
$(\lambda_{\eta, 2,0},1,\Psi_{\eta,2,0})$,
where $\lambda_{\eta, 2,0}$ is obtained from $\lambda_{\eta,0,0}$ by adding a zero on the left and $\Psi_{\eta, 0,0}\subset \Psi_{\eta,2,0}$. 
\item[(2)] If $z=2w>0$ (i.e. case $2$ of Theorem \ref{Paul05JFA}), there are two possible cases: 
\begin{enumerate}
    \item If $e_{p_{\eta,0}+1}+e_{p_{\eta,0}+z}\in\Psi_\eta$, then we have $p=2p_\eta+2$ and $q=2q_{\eta}$. The corresponding Harish-Chandra parameter is 
$(\lambda_{\eta,2,0},1,\Psi_{\eta, 2,0})$,
where $\lambda_{\eta,2,0}$ is obtained from $\lambda_{\eta,0,0}$ by adding a zero on the left side, and $\Psi_{\eta, 2,0}$ contains $\Psi_{\eta,0,0}$.
\item If $-e_{p_{\eta,0}+1}-e_{p_{\eta,0}+z}\in\Psi_{\eta}$, then we have $p=2p_\eta$ and $q=2q_{\eta}+2$. The corresponding Harish-Chandra parameter is 
$(\lambda_{\eta,0,2},1,\Psi_{\eta, 0,2})$,
where $\lambda_{\eta,0,2}$ is obtained from $\lambda_{\eta,0,2}$ by adding a zero on the right side, and $\Psi_{\eta, 0,2}$ contains $ \Psi_{\eta,0,0}$. 
\end{enumerate} 
\item[(3)] If $z=2w+1$ (i.e. case $4$ of Theorem \ref{Paul05JFA}), then we have $p=2p_{\eta}+2$ and $q=2q_{\eta}+2$. The corresponding Harish-Chandra parameter is 
$(\lambda_{\eta,1,1},1,\Psi_{\eta, 1,1})$,
where $\lambda_{\eta,1,1}$ is obtained from $\lambda_{\eta,0,0}$ by adding a zero on each side of the semicolon. Moreover, 
\begin{enumerate}
\item if $e_{p_{\eta,0}+1}+e_{p_{\eta,0}+z}\in\Psi_\eta$, then $e_{p_{\eta,0}+w+1}-f_{q_{\eta,0}+w+1}\in\Psi_{\eta,1,1}$. 
\item If $-e_{p_{\eta,0}+1}-e_{p_{\eta,0}+z}\in\Psi_\eta$, then $-e_{p_{\eta,0}+w+1}+f_{q_{\eta,0}+w+1}\in\Psi_{\eta,1,1}$.
\end{enumerate}
\end{enumerate}

At last, we need to translate this Harish-Chandra parameter $(\lambda_{\eta,a,b},1,\Psi_{\eta,a,b})$ of $\theta_{V,V^{'}}(\pi)$ into the Langlands-Vogan parameter of $\theta_{V,V^{'}}(\pi)$, where $(a,b)=(2,0)$, $(0,2)$ or $(1,1)$, which depends on the Langlands-Vogan parameters $(\lambda_{\eta},\Psi_{\eta})$ of $\pi$.
\begin{enumerate}
    \item[(1)] If $(a,b)=(2,0)$, then the corresponding Harish-Chandra parameter is \[\lambda_{\eta,2,0}=(\underbrace{\lambda_{1},\cdots ,\lambda_{1}}_{p_{\eta,1}},\cdots,
\underbrace{\lambda_{k},\cdots,\lambda_{k}}_{p_{\eta,k}},\underbrace{0,\cdots,0}_{w+1};\underbrace{\lambda_{1},\cdots,\lambda_{1}}_{q_{\eta,1}},\cdots,\underbrace{\lambda_{k},\cdots,\lambda_{k}}_{q_{\eta,k}},\underbrace{0,\cdots,0}_{w}) \]
and a root system $\Psi_{\eta,2,0}\supset\Psi_{\eta,2,0}$.
\item[(2)] If $(a,b)=(0,2)$, then the corresponding Harish-Chandra parameter is \[\lambda_{\eta,2,0}=(\underbrace{\lambda_{1},\cdots ,\lambda_{1}}_{p_{\eta,1}},\cdots,
\underbrace{\lambda_{k},\cdots,\lambda_{k}}_{p_{\eta,k}},\underbrace{0,\cdots,0}_{w};\underbrace{\lambda_{1},\cdots,\lambda_{1}}_{q_{\eta,1}},\cdots,\underbrace{\lambda_{k},\cdots,\lambda_{k}}_{q_{\eta,k}},\underbrace{0,\cdots,0}_{w+1}) \]
and a root system $\Psi_{\eta,0,2}\supset\Psi_{\eta,0,0}$.
\item[(3)] If $(a,b)=(1,1)$, then the corresponding Harish-Chandra parameter is \[\lambda_{\eta,2,0}=(\underbrace{\lambda_{1},\cdots ,\lambda_{1}}_{p_{\eta,1}},\cdots,
\underbrace{\lambda_{k},\cdots,\lambda_{k}}_{p_{\eta,k}},\underbrace{0,\cdots,0}_{w+1};\underbrace{\lambda_{1},\cdots,\lambda_{1}}_{q_{\eta,1}},\cdots,\underbrace{\lambda_{k},\cdots,\lambda_{k}}_{q_{\eta,k}},\underbrace{0,\cdots,0}_{w+1}) \]
and a root system $\Psi_{\eta,1,1}\supset\Psi_{\eta,0,0}$.
\end{enumerate}
Hence the corresponding Langlands-Vogan parameter of $\theta_{V,V^{'}}(\pi)$ is the pair $(\theta_{V,V^{'}}(\varphi),\theta_{V,V^{'}}(\eta))$ given by \[\theta_{V,V^{'}}(\varphi)=\oplus_{i=1}^k(p_{\eta,i}+q_{\eta,i})\rho_{\lambda_i}\bigoplus(2w+2)\mathbf{1} \] and \[\theta_{V,V^{'}}(\eta)\vert_{A_{\varphi}}=\eta.\]

Conversely, we start with a limit of discrete series representation $\sigma$ of $\mathrm{SO}(p,q)$ with $p+q=2n+2$. Moreover, we assume the theta lift of $\sigma$ (as an ${\rm O}(p,q)$ representation) is non-zero.  

Let $(\varphi^{'},\eta^{'})$ be the Langlands-Vogan parameter of $\sigma$. By \cite[Corollary 24]{Paul05}, the Harish-Chandra lifting of $\sigma$ has the form
\[\lambda^{'}_{\eta^{'}}=(\underbrace{\lambda^{'}_{1},\cdots ,\lambda^{'}_{1}}_{p_{\eta^{'},1}},\cdots,
\underbrace{\lambda^{'}_{k},\cdots,\lambda^{'}_{k}}_{p_{\eta^{'},k}},\underbrace{0,\cdots,0}_{z+1};\underbrace{\lambda^{'}_{1},\cdots,\lambda^{'}_{1}}_{q_{\eta^{'},1}},\cdots,\underbrace{\lambda^{'}_{k},\cdots,\lambda^{'}_{k}}_{q_{\eta^{'},k}},\underbrace{0,\cdots,0}_{z^{'}}),
\]
where $\lambda^{'}_1>\cdots>\lambda^{'}_k>0$, $2(p_{\eta^{'},1}+\cdots+p_{\eta^{'},k}+z+1)=p$ and $2(q_{\eta^{'},1}+\cdots+q_{\eta^{'},k}+z^{'})=q$. This implies $\varphi^{'}$ has the decomposition \[\varphi^{'}=\oplus_{i=1}^k(p_{\eta,i}+q_{\eta,i})\rho_{\lambda_i}\bigoplus 2(z+z^{'}+1)\mathbf{1}.\]

Denote by $\theta(\sigma)$ the corresponding representation of $\mathrm{Sp}_{2n}(\mathbb{R})$. The theta correspondence gives the Harish-Chandra lifting of $\theta(\sigma)$ is \[\lambda_{\eta^{'}}=(\underbrace{\lambda^{'}_{1},\cdots ,\lambda^{'}_{1}}_{p_{\eta^{'},1}},\cdots,
\underbrace{\lambda^{'}_{k},\cdots,\lambda^{'}_{k}}_{p_{\eta^{'},k}},\underbrace{0,\cdots,0}_{z+z^{'}};\underbrace{-\lambda^{'}_{k},\cdots,\lambda^{'}_{k}}_{q_{\eta^{'},k}},\cdots,\underbrace{-\lambda^{'}_{1},\cdots,-\lambda^{'}_{1}}_{q_{\eta^{'},1}})\] and the Langlands parameter $\varphi$ of $\theta(\sigma)$ is given by \[\varphi=\oplus_{i=1}^k(p_{\eta,i}+q_{\eta,i})\rho_{\lambda_i}\bigoplus(2(z+z^{'})+1)\mathbf{1}.\] The component group $A_\varphi$ is a subgroup of $A_{\varphi^{'}}$. Moreover, the Vogan parameter is $$\theta_{V,V^{'}}(\eta)=\eta\vert_{A_{\varphi}}.$$

We summary our description of theta correspondence for symplectic-orthogonal dual pairs in term of Langlands-Vogan parameters for limit of discrete series representations in the following theorem. 

\begin{theorem}\label{LVHCtheorem}
  \begin{enumerate}
  \item[(1)]   If $\pi$ is a limit of discrete series representation of $\mathrm{Sp}(V)$ with Langlands-Vogan parameter $(\varphi,\eta)$, where \[\varphi=\oplus_{i=1}^k(p_{\eta,i}+q_{\eta,i})\rho_{\lambda_i}\bigoplus (2z+1)\mathbf{1},\] with $p_{\eta,i},q_{\eta,i},z\in\mathbb{N}$, $i=1,\cdots,k$, $\sum_{i=1}^k (p_{\eta,i}+q_{\eta,i})+z=n$ and $\rho_{\lambda_i}$ is self-dual irreducible representation of the Weil group $W_{\mathbb{R}}$ of dimension 2.
     Then there exists a unique pair of even integers $(p,q)$ such that $p+q=2n+2$, and $\theta_{V,V^{'}}(\pi)$ is a limit of discrete series representation of $\mathrm{O}(p,q)$ with Langlands parameter $\theta_{V,V^{'}}(\varphi)$, where \[\theta_{V,V^{'}}(\varphi)=\oplus_{i=1}^k(p_{\eta,i}+q_{\eta,i})\rho_{\lambda_i}\bigoplus(2z+2)\mathbf{1}.\] Moreover, we can regard the component group $A_{\varphi}$ as a subgroup of $A_{\theta_{V,V^{'}}(\varphi)}$, then \[ \theta_{V,V^{'}}(\eta)\vert_{A_{\varphi}}=\eta. \] 
     \item [(2)] Let $V^{'}$ be a $(2n+2)$-dimensional real orthogonal space with signature $(p,q)$, where $p,q$ are even integers, and let $\pi^{'}$ be a limit of discrete series representation of $\mathrm{O}(V^{'})$ with Langlands-Vogan parameter $(\varphi^{'},\eta^{'})$. Assume that $\theta_{V^{'},V}(\pi^{'})\neq 0$. Then the Langlands-Vogan parameter $(\varphi,\eta)$ of the representation $\theta_{V^{'},V}(\pi^{'})$ of $\mathrm{Sp}(V)$ is given by
\[\varphi=\varphi^{'}-\mathbf{1}, \quad \eta=\eta^{'}\vert_{A_{\varphi}}.\]
     \end{enumerate}
\end{theorem}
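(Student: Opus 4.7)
The plan is to prove Theorem \ref{LVHCtheorem} by reducing it, via the Langlands-Vogan$\leftrightarrow$Harish-Chandra dictionary established in Section \ref{subsec2.6}, to the Moeglin--Paul explicit correspondence stated in Theorem \ref{Paul05JFA}. Concretely, given a limit of discrete series $\pi$ of $\mathrm{Sp}(V)$ with Langlands-Vogan parameter $(\varphi,\eta)$, I first translate $(\varphi,\eta)$ into a Harish-Chandra parameter $(\lambda_\eta,\Psi_\eta,\mu_\eta)$ using the recipe of \S\ref{subsec2.6}: write $\eta$ as a signed partition $(p_{\eta,i},q_{\eta,i})$ of each $c_i=p_{\eta,i}+q_{\eta,i}$ together with its value $\eta(b)$ on the component attached to the trivial representation, and read off $\lambda_\eta$ in the form
\[
\lambda_\eta = (\underbrace{\lambda_1,\dots,\lambda_1}_{p_{\eta,1}},\dots,\underbrace{\lambda_k,\dots,\lambda_k}_{p_{\eta,k}},\underbrace{0,\dots,0}_z,\underbrace{-\lambda_k,\dots,-\lambda_k}_{q_{\eta,k}},\dots,\underbrace{-\lambda_1,\dots,-\lambda_1}_{q_{\eta,1}}),
\]
with the associated positive system $\Psi_\eta$ determined by the generic basepoint $x_{b,\mathrm{Sp}}$ computed in Example \ref{basepoint} and the Weyl-group coordinate change of Example \ref{coordinate}.

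Next, I feed $(\lambda_\eta,\Psi_\eta,\mu_\eta)$ into Theorem \ref{Paul05JFA}. The three cases $z=2w$, $z=2w>0$, $z=2w+1$ precisely yield the signature $(p,q)$ with $p+q=2n+2$ and produce a Harish-Chandra parameter $(\lambda_{\eta,a,b},1,\Psi_{\eta,a,b})$ of $\theta_{p,q}(\pi)$ of the form described in the paragraph preceding the theorem. I then translate back: inspecting each of the three cases, the $0$-entries of $\lambda_{\eta,a,b}$ increase the multiplicity of $\mathbf{1}$ in the L-parameter by exactly one, while the nontrivial blocks $\rho_{\lambda_i}$ and the partition $(p_{\eta,i},q_{\eta,i})$ are unchanged. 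This gives the first claim
\[
\theta_{p,q}(\varphi)=\varphi+\mathbf{1}=\bigoplus_{i=1}^k(p_{\eta,i}+q_{\eta,i})\rho_{\lambda_i}\oplus (2z+2)\mathbf{1}.
\]

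For the component-group assertion I note that under the description of Example \ref{Lparameterdecom}, $A_{\varphi}$ and $A_{\theta_{p,q}(\varphi)}$ differ only in whether the trivial block carries a symbol $b$ or not; since the regular-parameter lifts $A_{\varphi^{\rm reg}}\twoheadrightarrow A_\varphi$ and $A_{\theta_{p,q}(\varphi)^{\rm reg}}\twoheadrightarrow A_{\theta_{p,q}(\varphi)}$ are compatible, $A_\varphi$ embeds canonically into $A_{\theta_{p,q}(\varphi)}$. The equality $\theta_{p,q}(\eta)|_{A_\varphi}=\eta$ now follows by observing, via the Weyl-coordinate computation of Example \ref{coordinate}, that the element $w_{\theta_{p,q}(\eta)}$ producing the HC parameter $(\lambda_{\eta,a,b},\Psi_{\eta,a,b})$ from the orthogonal basepoint $x_{b,\mathrm{O}}$ is, up to the factor acting on the added $\mathbf{1}$-block, exactly the Weyl element $w_\eta$ that produced $(\lambda_\eta,\Psi_\eta)$ from $x_{b,\mathrm{Sp}}$; hence the signs of $\theta_{p,q}(\eta)$ on the components $a_i$ match those of $\eta$.

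Part (2) is the converse and will be proved by reversing the dictionary. Starting from a limit of discrete series $\pi'$ of $\mathrm{O}(V')$ with parameter $(\varphi',\eta')$, I use \cite[Corollary 24]{Paul05} (already cited in the discussion) to see that the non-vanishing hypothesis $\theta_{V',V}(\pi')\neq 0$ forces $\lambda'_{\eta'}$ to have a zero entry that gets removed by the lift, i.e.\ $\varphi=\varphi'-\mathbf{1}$; the same Weyl-coordinate argument as in (1) then gives $\eta=\eta'|_{A_\varphi}$. The main obstacle throughout is not any single hard computation but the bookkeeping: one must carefully track how the Weyl element $w_\eta$ attached to a Langlands-Vogan character under the identification $\widehat{A_\varphi}\cong\widetilde S(z(\rho^\vee))/W({\bf T}_*,{\bf K})$ is transported through the case-by-case combinatorics of Theorem \ref{Paul05JFA}, in order to verify the compatibility of the two basepoints $x_{b,\mathrm{Sp}}$ and $x_{b,\mathrm{O}}$ under $\theta_{p,q}$ uniformly in $n$ (even/odd) and in the parity of $z$.
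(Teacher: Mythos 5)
Your proposal matches the paper's proof strategy: translate $(\varphi,\eta)$ to a Harish-Chandra parameter via the basepoint/Weyl-group dictionary of \S\ref{subsec2.6}, apply Theorem \ref{Paul05JFA} case by case, translate the resulting Harish-Chandra parameter back, and for part (2) invoke \cite[Corollary 24]{Paul05} and reverse. One small slip: the cases of Theorem \ref{Paul05JFA} relevant to the $(2n+2)$-dimensional lift are $z=w=0$, $z=2w>0$, and $z=2w+1$ (cases 3, 2, 4), not ``$z=2w$'' (which is case 1 and yields a lift with $p+q=2n$); and the paper isolates the basepoint compatibility $\theta_{V,V'}(x_{b,\mathrm{Sp}}) \leftrightarrow x_{b,\mathrm{O}}$ as a stand-alone Proposition with an explicit parity-of-$n$ computation before treating general $\eta$, which is worth making explicit rather than folding into ``bookkeeping.''
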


\subsection{The Tempered Case}
In this section, we recall the Langlands-Vogan parametrization of parabolic inductions (cf. Proposition \ref{bijectiontolds}) and the induction principle of theta lifts (cf. Theorem \ref{Inductionprinc'}), which allow us to reduce the proof of our main theorem for the tempered case (cf. Theorem \ref{Maintemper}) to the case of limit of discrete series (i.e. Theorem \ref{correspondencetheorem}).

\subsubsection{Langlands parametrization of parabolic inductions}
Let $H$ be a real symplectic group or a real orthogonal group. 
Let $H_0$ be the subgroup of $H$ which is the same type of $H$, i.e. $H_0=\mathrm{Sp}(V_0)$ or $\mathrm{O}(V_0^{'})$ for some symplectic subspace $V_0\subset V$ or some orthogonal subspace $V_0^{'}\subset V^{'}$. Consider the parabolic subgroup $P=MAN$ of $H$ with Levi factor \[ MA\cong H_0 \times\mathrm{GL}_2(\bb R)^s\times\mathrm{GL}_1(\bb R)^t \]
where $s,t$ are non-negative integers. The parabolically induced representations 
\[  { \rm Ind }_P^{G}\pi_0\otimes\tau\otimes\chi\otimes 1, \]
have a unique irreducible Langlands quotient, where $\pi_0$ is a limit of discrete series of $H_0$, $\tau$ is a relative limit of discrete series representation of $\mathrm{GL}_2(\bb R)^s$ and $\chi$ is a quasi-character of $\mathrm{GL}_{1}(\bb R)^t$. Denote this unique irreducible Langlands quotient representation by $\pi=\pi(\pi_0,\tau,\chi)$.

Let $\varphi$ be the Langlands parameter associated to $\pi$ with the decomposition \[\varphi=\oplus_{i=1}^kc_i\rho_{\lambda_i}\bigoplus (2z+1)\mathbf{1}\bigoplus C,\] where 
\begin{enumerate}
\item $\lambda_i$ are odd positive integers such that $\lambda_1>\cdots>\lambda_k>0$;
\item $\rho_{\lambda_i}$'s are self-dual irreducible representations of the Weil group $W_{\mathbb{R}}$ of dimension $2$;
\item  the $c_i, z$ are natural numbers for $i=1,\cdots,k$ such that $z + \sum_{i=1}^k c_i=n$;
\item $C$ is the non self-dual part of $\varphi$.
\end{enumerate}

Let $\varphi_0=\oplus_{i=1}^kc_i\rho_{\lambda_i}\bigoplus (2z+1)\mathbf{1}$ which is a Langlands parameter of $H_0$. Then $\Pi(\varphi,H)$ is the collection of all these Langlands quotients where ${\pi}_0$ ranges over $\Pi(\varphi_0,H_0)$.

\begin{proposition}\label{bijectiontolds}\cite[Theorem~2.9]{Vogan84}
    Taking Langlands quotients of parabolic inductions gives a bijection between $\Pi(\varphi,H)$ and $\Pi(\varphi_0,H_0)$.
\end{proposition}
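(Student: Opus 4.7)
The plan is to combine the Langlands classification for real reductive groups with the triviality of $L$-packet structure on general linear factors. The basic observation is that in the decomposition $\varphi = \varphi_0 \oplus \varphi_M$, where $\varphi_M$ records the parameter data attached to the $\mathrm{GL}_2(\mathbb{R})^s \times \mathrm{GL}_1(\mathbb{R})^t$ part of the Levi (including the non-self-dual summand $C$ and the self-dual pieces absorbed into $\tau$), the Levi data $(\tau,\chi)$ is already uniquely determined by $\varphi$. This is because the local Langlands correspondence for $\mathrm{GL}_n(\mathbb{R})$ is a genuine bijection, so $L$-packets on general linear groups are singletons. Consequently, only the $H_0$-component of the Levi data remains to be chosen, and it is exactly parametrized by $\Pi(\varphi_0,H_0)$.

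The first step is to match the parabolic induction with the parameter via the Langlands dual embedding $^L(H_0\times\mathrm{GL}_2^s\times\mathrm{GL}_1^t)\hookrightarrow {^L H}$. The non-self-dual summand $C$, together with the appropriate characters of $\mathbb{C}^\times$ coming from the $\mathrm{GL}_1$ and $\mathrm{GL}_2$ factors, determines $(\tau,\chi)$ uniquely. The remaining self-dual summands $\oplus c_i\rho_{\lambda_i}\oplus(2z+1){\bf 1}$ assemble into $\varphi_0$, an $L$-parameter of $H_0$.

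The second step is to verify that for each $\pi_0\in\Pi(\varphi_0,H_0)$, the Langlands quotient $\pi(\pi_0,\tau,\chi)$ is well-defined, irreducible, and has $L$-parameter $\varphi$. The first two assertions are the content of the Langlands classification theorem for real reductive groups (cf.\ Wallach, Vogan), which asserts that $\mathrm{Ind}_P^H(\pi_0\otimes\tau\otimes\chi\otimes 1)$ has a unique irreducible Langlands quotient once the quasi-character on $A$ is put in the dominant chamber (which we arrange by choice of $P$). The identification of the $L$-parameter follows from the compatibility of the local Langlands correspondence with parabolic induction: $L$-parameters of parabolic inductions are direct sums of $L$-parameters of the inducing representations.

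The third step is to check bijectivity of the map $\pi_0\mapsto \pi(\pi_0,\tau,\chi)$. Injectivity follows from the uniqueness statement of the Langlands classification: if two Langlands quotients coincide, then the Langlands data $(P,\pi_0\otimes\tau\otimes\chi\otimes 1)$ agree up to conjugacy, and in particular $\pi_0\cong\pi_0'$ since $(\tau,\chi)$ is fixed. For surjectivity, take any $\pi\in\Pi(\varphi,H)$, realize it by Langlands classification as a Langlands quotient of some cuspidal parabolic induction, and note that the parabolic $P$ and the $\mathrm{GL}$-components $(\tau,\chi)$ of the Levi datum are forced by the decomposition of $\varphi$ (via the $\mathrm{GL}$ singleton-$L$-packet observation above); the remaining $H_0$-factor then necessarily lies in $\Pi(\varphi_0,H_0)$. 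The main obstacle is the careful bookkeeping that matches the decomposition of $\varphi$ with the unique decomposition of the Langlands data and checks that no two distinct parabolics produce the same quotient; once this is pinned down, the bijection is immediate.
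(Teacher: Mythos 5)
The paper does not actually prove this proposition; it is stated with a citation to Vogan's Theorem~2.9 and left as an external reference. So there is no internal argument to compare against, and what you have written is a reconstruction. Your sketch correctly identifies the two pillars on which the result rests: the singleton nature of $L$-packets on $\mathrm{GL}_n(\mathbb{R})$ (so that $(\tau,\chi)$ is frozen by $\varphi$) and the uniqueness part of the Langlands classification (so that $\pi_0\mapsto\pi(\pi_0,\tau,\chi)$ is injective and parameter-compatible). That is the right skeleton and is consistent with what the cited theorem is used for.

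There is, however, a gap worth flagging. Your second and third steps lean on the phrase ``once the quasi-character on $A$ is put in the dominant chamber,'' which silently assumes the strictly-positive (non-tempered) Langlands-quotient situation. But the proposition is invoked precisely to handle the \emph{tempered} case (see Theorem~\ref{Maintemper} and \S\ref{Inducprinsec}), where the exponent on the $\mathrm{GL}_1$ and $\mathrm{GL}_2$ factors is unitary and therefore sits on the boundary of the dominant chamber. In that regime, $\mathrm{Ind}_P^H(\pi_0\otimes\tau\otimes\chi\otimes 1)$ is a unitary induction from tempered data; it need not have a ``unique irreducible quotient'' in the naive sense, and a priori it could be reducible. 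What saves the statement here is an $R$-group (Knapp--Stein) argument: the $\mathrm{GL}$-factors determined by the non-self-dual summand $C$ contribute trivially to the $R$-group and to the component group, so the induction is irreducible for each $\pi_0\in\Pi(\varphi_0,H_0)$ and $A_\varphi\cong A_{\varphi_0}$. Your proof needs to either restrict to the strictly positive chamber (which does not cover the paper's application) or add this $R$-group/component-group step to justify irreducibility and surjectivity in the tempered case. A secondary point you should tighten: in your surjectivity argument, claiming that ``the parabolic $P$ \ldots is forced'' requires the disjointness and conjugacy uniqueness of Langlands data from the classification theorem; this is true but should be cited rather than asserted, since two superficially different Levi decompositions can produce the same quotient only if they are associate.
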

Recall that by \cite[Theorem~6.3]{Vogan93}, we have a natural bijection between $\Pi(\varphi,H)$ (resp. $\Pi({\varphi_0}, H_0)$) and the set of irreducible representations of the component group $A_\varphi:=\pi_0(C_\varphi)$ (resp. $A_{\varphi_0}$) of $\varphi$ (resp. $\varphi_0$). The following diagram commutes
\[\begin{tikzcd}
	{\Pi({\varphi_0})} & {\widehat{A_{\varphi_0}}} \\
	{\Pi(\varphi)} & {\widehat{A_{\varphi}}}
	\arrow[from=1-1, to=1-2]
	\arrow["1:1", from=1-1, to=2-1]
	\arrow["{=}", from=1-2, to=2-2]
	\arrow[from=2-1, to=2-2]
\end{tikzcd}\]
where the bijection on the left arrow is given by Proposition \ref{bijectiontolds}.
\subsubsection{Induction principle of theta lift}\label{Inducprinsec}
Let $(G,G^{'})=(\mathrm{Sp}_{2n}(\bb R), \mathrm{O}(p,q))$ be the symplectic-orthogonal dual pair with $p+q=2n+2$ and $\omega$ the oscillator representation for the dual pair $(G, G')$. There are parabolic subgroups $P=MAN$ and $P^{'}=M^{'}A^{'}N^{'}$ of $G$ and $G^{'}$ with Levi factor \[MA\cong \mathrm{Sp}_{2n^{'}}(\bb R)\times \mathrm{GL}_2(\bb R)^s\times\mathrm{GL}_1(\bb R)^t \] and \[M'A'\cong \mathrm{O}(p^{'},q^{'})\times \mathrm{GL}_2(\bb R)^s\times\mathrm{GL}_1(\bb R)^t  \]
where $2n^{'}+2s+t=2n$ and $p^{'}+q^{'}=2n^{'}+2$.

Recall that we denote by $\chi_{\epsilon, \kappa}$ the quasi-character $x\mapsto \mathrm{sgn}(x)^{\frac{\epsilon-1}{2}}|x|^{\kappa}$ of $\mathrm{GL}_1(\bb R)$ for $\epsilon \in \{\pm 1\}$ and $\kappa\in {\bb C} $. Let $\pi=\pi(\rho,\tau,\chi_{\epsilon,\kappa})$ be an irreducible admissible representation of ${\rm Sp}_{2n}({\bb R})$, which is the unique irreducible quotient of the standard module of $\rho \otimes \tau\otimes \chi$ with $\rho$ a limit of discrete series representation of ${\rm Sp}_{2n^{'}}({\bb R})$, $\tau=\otimes_{i=1}^s\tau(\mu_i,\nu_i)$ a relative limit of discrete series of ${\rm GL}_2(\bb R)^s$ and $\chi=\otimes_{i=1}^t \chi_{\epsilon_i, \kappa_i}$ a character of ${\rm GL}_1(\bb R)^t$ . As in Theorem \ref{correspondencetheorem}, there is a unique pair of integers $(p^{'},q^{'})$ with $p^{'}+q^{'}=2n^{'}+2$ such that $\theta_{p^{'},q^{'}}(\rho)$ is a non-zero limit of discrete series representation of ${\rm O}(p^{'},q^{'})$. 
\begin{theorem}\cite[Theorem 18]{Paul05}\label{Inductionprinc'}
Let $n,n^{'},p^{'},q^{'},s$ and $t$ be non-negative integers as above. Let $\epsilon_{p,q}=(\epsilon_1\cdot(-1)^{\frac{p^{'}-q^{'}}{2}},...,\epsilon_t\cdot(-1)^{\frac{p^{'}-q^{'}}{2}}) $. For  the irreducible admissible representation $\pi=\pi(\rho,\tau,\chi_{\epsilon,\kappa})$ of ${\rm Sp}_{2n}({\bb R})$ as above, we have
$$\theta_{p,q}(\pi)=\theta_{p,q}(\pi)(\theta_{p',q'}(\rho),\tau,\chi_{\epsilon_{p,q},\kappa}).$$
\end{theorem}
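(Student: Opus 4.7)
The plan is to establish this induction principle by analyzing the behavior of the Weil representation $\omega$ with respect to the parabolic subgroups on both sides, following the general strategy of Kudla. First I would set up Kudla's filtration: restricting $\omega$ to $P \times G'$, the isotropic flag in $V$ cut out by $P$ induces a filtration of $\omega$ by $P \times G'$-stable subspaces whose graded pieces are identified with
\begin{equation*}
\mathrm{ind}_{P'\cap (MAN\times G')}^{G'}\bigl(\omega_{p',q'}\otimes \Sigma_j\bigr),
\end{equation*}
where $\omega_{p',q'}$ is the Weil representation for the smaller dual pair $(\mathrm{Sp}_{2n'}(\mathbb{R}),\mathrm{O}(p',q'))$ and $\Sigma_j$ is an explicit representation of the $\mathrm{GL}_2(\mathbb{R})^s\times \mathrm{GL}_1(\mathbb{R})^t$-factor built from smaller Weil representations and natural actions on isotropic subspaces.

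Second, by Frobenius reciprocity applied to the standard module $\mathrm{Ind}_P^G(\rho\otimes\tau\otimes\chi_{\epsilon,\kappa})$, I would compute $\theta_{p,q}$ of the standard module as the corresponding $G'$-parabolic induction. The top piece of Kudla's filtration contributes
\begin{equation*}
\mathrm{Ind}_{P'}^{G'}\bigl(\theta_{p',q'}(\rho)\otimes \tau \otimes \chi_{\epsilon_{p,q},\kappa}\bigr),
\end{equation*}
where the quadratic character twist $\epsilon_{p,q}$ on each $\mathrm{GL}_1$-factor arises from the splitting \eqref{splitA} of the metaplectic cover over the Levi $MA$; explicitly, the local splitting formula of Kudla (see \cite[\S 3.2]{GI14}) introduces the factor $(-1)^{(p'-q')/2}$ on each one-dimensional piece, which is precisely the discrepancy between the splitting used upstairs on $\mathrm{Sp}_{2n}\times \mathrm{O}(p,q)$ and that used downstairs on $\mathrm{Sp}_{2n'}\times\mathrm{O}(p',q')$.

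Third, I would descend to the Langlands quotient. Because $\pi(\rho,\tau,\chi_{\epsilon,\kappa})$ is the unique irreducible Langlands quotient of the standard module, the theta lift $\theta_{p,q}(\pi)$ is a quotient of $\theta_{p,q}$ of the standard module. Combining with the characterization of $\theta_{p',q'}(\rho)$ as a limit of discrete series from Theorem \ref{correspondencetheorem}, and with the fact that twisting $\tau\otimes\chi_{\epsilon_{p,q},\kappa}$ by an unramified character places its Langlands data in the same relative position on the $\mathrm{O}(p,q)$-side as on the $\mathrm{Sp}_{2n}(\mathbb{R})$-side, one identifies this quotient with $\pi(\theta_{p',q'}(\rho),\tau,\chi_{\epsilon_{p,q},\kappa})$.

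The main obstacle will be twofold. On one hand, showing that the lower graded pieces of Kudla's filtration do not contribute to the theta lift of the Langlands quotient requires a central-character or infinitesimal-character argument, ruling out that these lower pieces can map onto a module with the same Langlands data; in the stable range this is automatic, but here one must verify it directly from the structure of $\rho$ as a limit of discrete series representation and the constraint $p+q=2n+2$. On the other hand, correctly pinning down the sign character $\epsilon_{p,q}=(\epsilon_1(-1)^{(p'-q')/2},\dots,\epsilon_t(-1)^{(p'-q')/2})$ demands a careful bookkeeping of the chosen splittings of the metaplectic cover on both sides, because the sign depends on the auxiliary data fixing \eqref{splitA} and is easy to misplace.
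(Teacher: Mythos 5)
Your approach and the paper's differ in the crucial final step of pinning down the exact constituent of the induced module on the orthogonal side. Both you and the paper start from the induction principle: Kudla's filtration of $\omega$ restricted to $P\times G'$ yields a nonzero $G\times G'$-equivariant map $\omega\to I\boxtimes I'$, where $I=\mathrm{Ind}_{P}^{G}(\rho\otimes\tau\otimes\chi_{\epsilon,\kappa}\otimes\mathbf 1)$ and $I'=\mathrm{Ind}_{P'}^{G'}(\theta_{p',q'}(\rho)\otimes\tau^*\otimes\chi^*_{\epsilon,\kappa}\otimes\mathbf 1)$; hence $\theta_{p,q}(\pi)$ is a constituent of $I'$. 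Where you then propose a central/infinitesimal-character argument plus an appeal to ``same relative Langlands position'' to show the theta lift is the Langlands quotient of $I'$, the paper (following Paul) uses the minimal $K$-type machinery: if $\Lambda$ is a minimal $K$-type of $\pi$, it corresponds under the local theta lift to a $K'$-type $\Lambda'$, the constituent of $I'$ that $\pi$ lifts to contains $\Lambda'$, one checks that $\Lambda'$ is a minimal $K'$-type of $I'$, and since minimal $K'$-types of $I'$ occur with multiplicity one the constituent is uniquely determined. This also explains why the choice of based root datum does not matter.

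There is a genuine gap in your sketch precisely where you flag it yourself: you do not resolve the question of whether lower graded pieces of the Kudla filtration can contribute a constituent of $I'$ with the same Langlands data. Infinitesimal-character considerations alone cannot distinguish constituents of the same standard module $I'$, since they all share the same infinitesimal character; and a central-character argument on the $\mathrm{GL}$-factors will not separate all subquotients in the non-stable range. The minimal $K$-type correspondence is the tool the literature uses to close exactly this gap: it works pointwise on the lift of $\pi$ rather than by process of elimination over the whole filtration. Without it (or some substitute, e.g.\ explicit control of Jacquet modules), the identification of the constituent is not forced. Your bookkeeping of the sign $\epsilon_{p,q}=(\epsilon_i\cdot(-1)^{(p'-q')/2})_i$ via the metaplectic splitting data is, on the other hand, in line with how the discrepancy between the upstairs and downstairs splittings is usually tracked, and is correct in spirit.
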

\begin{remark} We explain briefly the proof of above theorem to see that our choice of root datum doesn't affect the result. Let $\pi^{'}$ be the proposed theta lift of $\pi$ and $\rho^{'}=\theta_{p',q'}(\rho)$. We can arrange that $\pi$ is a quotient of $I=\mathrm{Ind}_{P}^{G}(\rho\otimes \tau\otimes\chi_{\epsilon,\kappa}\otimes\mathbf{1})$. Then $\pi'$ is a constituent of $I'=\mathrm{Ind}_{P'}^{G'}(\rho'\otimes \tau^*\otimes\chi^*_{\epsilon,\kappa}\otimes\mathbf{1})$, where $*$ is the contragredient representation. By the induction principle, there is a non-zero $G\times G'$-equivariant map $\omega\rightarrow I\times I'$. Suppose $\pi$ has a minimal $K$-type $\Lambda$. Then $\pi$  occurs in the correspondence and lifts to a constituent of $I'$ containing the $K$-type $\Lambda'$ which corresponds to $\Lambda$. This is minimal $K$-type of $\pi'$, and hence a minimal $K$-type of $I'$. Since the minimal $K$-types of $I'$ have multiplicity one, $\theta_{p,q}(\pi)=\pi'$. As a result, our choice of root datum doesn't affect the result. 
\end{remark}

By Proposition \ref{bijectiontolds} and the identification $\widehat{A_\varphi}=\widehat{A_{\varphi_0}}$, this induction principle implies our main result in the tempered case.
\begin{theorem}\label{Maintemper}
Let $V$ be a $2n$-dimensional symplectic space over $\mathbb{R}$.
    
$(1)$ Let $\pi$ be a tempered representation of $\mathrm{Sp}(V)$ with Langlands-Vogan parameter $(\varphi,\eta)$. Then there exists a unique pair of even integers $(p,q)$ satisfying $p+q=2n+2$ and a $(2n+2)$-dimensional orthogonal space $V^{'}$ with signature $(p,q)$ such that $\theta_{p,q}(\pi)$ is a tempered representation of $\mathrm{O}(p,q)$ with Langlands-Vogan parameter ($\theta_{p,q}(\varphi)$, $\theta_{p,q}(\eta))$, where $\theta_{p,q}(\varphi)=\varphi+{\bf 1}$. Then the component group $A_{\varphi}$ is a subgroup of $A_{\theta_{p,q}(\varphi)}$, and we have
$$\theta_{p,q}(\eta)|_{A_\varphi}=\eta.$$

$(2)$ Let $V^{'}$ be a $(2n+2)$-dimensional real orthogonal space with signature $(p,q)$, where $p,q$ are even integers, and let $\pi^{'}$ be a tempered representation of $\mathrm{O}(V^{'})$ with Langlands-Vogan parameter $(\varphi^{'},\eta^{'})$. Assume that $\theta_{V^{'},V}(\pi^{'})\neq 0$. Then the Langlands-Vogan parameter $(\varphi,\eta)$ of the tempered representation $\theta_{V^{'},V}(\pi^{'})$ of $\mathrm{Sp}(V)$ is given by
\[\varphi=\varphi^{'}-\mathbf{1}, \quad \eta=\eta^{'}\vert_{A_{\varphi}}.\]
\end{theorem}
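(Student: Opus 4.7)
The plan is to reduce Theorem \ref{Maintemper} to the limit of discrete series case (Theorem \ref{LVHCtheorem}) via the Langlands-Vogan parametrization of parabolic inductions (Proposition \ref{bijectiontolds}) and Paul's induction principle (Theorem \ref{Inductionprinc'}).

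For part $(1)$, I would first realize the tempered representation $\pi$ of $\mathrm{Sp}(V)$ as the Langlands quotient of a standard module $\mathrm{Ind}_P^G(\pi_0 \otimes \tau \otimes \chi)$, where $P = MAN$ is a parabolic subgroup with Levi factor $MA \cong \mathrm{Sp}(V_0) \times \mathrm{GL}_2(\mathbb{R})^s \times \mathrm{GL}_1(\mathbb{R})^t$ and $n = n' + 2s + t$ with $2n' = \dim V_0$; here $\pi_0$ is a limit of discrete series representation of $\mathrm{Sp}(V_0)$, while $\tau$ and $\chi$ are essentially tempered. The corresponding Langlands parameter decomposes as $\varphi = \varphi_0 \oplus C$, where $\varphi_0$ is the parameter of $\pi_0$ and $C$ collects the non-self-dual contributions from $\tau \otimes \chi$. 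Since $C$ produces no self-dual summands beyond those already present in $\varphi_0$, the commutative diagram displayed after Proposition \ref{bijectiontolds} gives an identification $\widehat{A_\varphi} \cong \widehat{A_{\varphi_0}}$, under which $\eta$ becomes a character of $A_{\varphi_0}$. Applying Theorem \ref{LVHCtheorem} then produces a unique pair of even integers $(p', q')$ with $p' + q' = 2n' + 2$ such that $\theta_{p', q'}(\pi_0)$ is a limit of discrete series of $\mathrm{O}(p', q')$ with Langlands-Vogan parameter $(\varphi_0 + \mathbf{1}, \theta_{p', q'}(\eta))$ satisfying $\theta_{p', q'}(\eta)|_{A_{\varphi_0}} = \eta$.

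Next, I would apply Theorem \ref{Inductionprinc'} with $\rho = \pi_0$ to obtain the tempered representation $\theta_{p, q}(\pi)$ of $\mathrm{O}(p, q)$, where $(p, q) = (p' + 2s + t, q' + 2s + t)$ satisfies $p + q = 2n + 2$. This representation is the Langlands quotient of $\mathrm{Ind}_{P'}^{G'}(\theta_{p', q'}(\pi_0) \otimes \tau \otimes \chi_{\epsilon_{p, q}, \kappa})$, whose Langlands parameter is $(\varphi_0 + \mathbf{1}) \oplus C = \varphi + \mathbf{1}$, as required. Via the compatibility of the component group identifications $\widehat{A_{\theta_{p, q}(\varphi)}} \cong \widehat{A_{\theta_{p', q'}(\varphi_0)}}$, the Vogan character of $\theta_{p, q}(\pi)$ restricts to $\eta$ on $A_\varphi \cong A_{\varphi_0}$. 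Uniqueness of $(p, q)$ follows from uniqueness of $(p', q')$ in Theorem \ref{LVHCtheorem}. Part $(2)$ is obtained by running the same argument in reverse: write $\pi'$ as the Langlands quotient of a parabolic induction with limit of discrete series inducing data on a smaller orthogonal group, apply part $(2)$ of Theorem \ref{LVHCtheorem} to the inducing representation, and then lift back using Theorem \ref{Inductionprinc'}.

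The main obstacle will be the bookkeeping required to verify that the sign twist $\epsilon_{p, q} = (\epsilon_1 \cdot (-1)^{(p' - q')/2}, \ldots, \epsilon_t \cdot (-1)^{(p' - q')/2})$ appearing in Theorem \ref{Inductionprinc'} is compatible with the component group identifications $\widehat{A_\varphi} \cong \widehat{A_{\varphi_0}}$ on both the symplectic and orthogonal sides, so that the equality $\theta_{p, q}(\eta)|_{A_\varphi} = \eta$ genuinely holds after induction. This reduces to the observation that the component groups of $\varphi$ and $\varphi + \mathbf{1}$ are generated by symbols attached only to their self-dual bounded summands, all of which come from $\varphi_0$ and $\varphi_0 + \mathbf{1}$ respectively; consequently the twists of the non-self-dual character $\chi$ by $(-1)^{(p' - q')/2}$ do not interact with the component-group data, and the resulting diagram of restrictions commutes. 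A secondary technical point is the choice of parabolic: while the Langlands data $(\pi_0, \tau, \chi)$ of a tempered representation is essentially unique up to rearrangement of the inducing factors, one has to confirm that the unique $(p, q)$ produced is independent of this choice, which follows from uniqueness in the discrete series case.
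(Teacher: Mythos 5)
Your proposal is correct and follows essentially the same route as the paper: realize the tempered representation as the Langlands quotient of a parabolic induction from a limit of discrete series (Proposition \ref{bijectiontolds} and the commutative diagram identifying $\widehat{A_\varphi}$ with $\widehat{A_{\varphi_0}}$), apply the limit of discrete series result (Theorem \ref{LVHCtheorem}) to the inducing data, and then transport back using Paul's induction principle (Theorem \ref{Inductionprinc'}). Your additional care in observing that the sign twist $\epsilon_{p,q}$ only affects the non-self-dual (i.e.\ $C$) part of the parameter and hence never interacts with the component group $A_\varphi$ is a detail the paper leaves implicit but which your argument correctly fills in.
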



\end{document}